\documentclass[reqno,11pt]{amsart}
 
\usepackage{amsmath}
\usepackage{amsthm,amssymb,color,comment}
\usepackage{bbm}
\usepackage{mathrsfs}
\usepackage{hyperref}
\usepackage[TS1,T1]{fontenc}
\usepackage[utf8]{inputenc}
\usepackage{dsfont}
\usepackage{tikz}
\usepackage{enumitem}
\usepackage{subfigure}
\usepackage{mathtools}
\usepackage{bbold}
\usepackage{esint}
\usepackage[sort]{cite}

\numberwithin{equation}{section}

\newtheorem{thm}{Theorem}[section]

\newtheorem{proposition}[thm]{Proposition}
\newtheorem{lem}[thm]{Lemma}

\newtheorem{Def}[thm]{Definition}

\theoremstyle{definition}
\newtheorem{Ass}[thm]{Assumption}
\newtheorem{rem}[thm]{Remark}

\DeclareMathOperator*{\esssup}{ess\,sup}
\DeclareMathOperator*{\essinf}{ess\,inf}
\DeclareMathOperator{\DIV}{div}
\DeclareMathOperator{\BL}{BL}
\DeclareMathOperator{\Lip}{Lip}

\newcommand{\bvmu}{\mathbf{v^{\mu}}}
\newcommand{\bvnu}{\mathbf{v^{\nu}}}
\newcommand{\bn}{\mathbf{n}}

\newcommand{\R}{\mathbb{R}}
\newcommand{\Td}{\mathbb{T}^{d}}
\newcommand{\Rd}{\mathbb{R}^{d}}

\newcommand{\N}{\mathbb{N}}
\newcommand{\p}{\partial}

\newcommand{\eps}{\varepsilon}

\newcommand{\diff}{\mathop{}\!\mathrm{d}}

\makeatletter
\newcommand{\doublewidetilde}[1]{{%
  \mathpalette\double@widetilde{#1}%
}}
\newcommand{\double@widetilde}[2]{%
  \sbox\z@{$\m@th#1\widetilde{#2}$}%
  \ht\z@=.9\ht\z@
  \widetilde{\box\z@}%
}
\makeatother
\author{José A. Carrillo}
\address{{\it José A. Carrillo:} Mathematical Institute, University of Oxford, Woodstock Road, Oxford, OX2 6GG, United Kingdom}
\email{carrillo@maths.ox.ac.uk}

\author{Piotr Gwiazda}
\address{{\it Piotr Gwiazda:} Institute of Mathematics, Polish Academy of Sciences, ul. Śniadeckich 8, 00-656 Warsaw, Poland; Interdisciplinary Centre for Mathematical and Computational Modelling, University of Warsaw, ul. Tyniecka 15/17, 02-630 Warsaw, Poland}
\email{pgwiazda@mimuw.edu.pl}
\thanks{}

\author{Jakub Skrzeczkowski}
\address{{\it Jakub Skrzeczkowski: } {St John's College, University of Oxford, St Giles, Oxford, OX1 3JP, United Kingdom} \& Mathematical Institute, University of Oxford, Woodstock Road, Oxford, OX2 6GG, United Kingdom}
\email{jakub.skrzeczkowski@maths.ox.ac.uk}

\begin{document}

\title[]{A new formula for the Wasserstein distance between solutions to (nonlinear) continuity equations}

\begin{abstract}
Given two continuity equations with density-dependent velocities, we provide a new formula for the Wasserstein distance between the solutions in terms of the difference of velocities evaluated at the same density. The formula is particularly attractive to deduce quantitative estimates and rates of convergence for singular limits. We illustrate it using several examples. For the porous medium equation with exponent $m$, we prove that solutions are Lipschitz continuous with respect to $m$, providing a quantitative version of the result of B\'{e}nilan and Crandall. This result can be extended to a general aggregation-diffusion equation. We also study the limit $m \to \infty$ (the so-called mesa problem or the incompressible limit) and we recover the rate of convergence $1/{\sqrt{m}}$. Last but not least, we improve the rate of nonlocal-to-local convergence for the quadratic porous medium equation from recently obtained $\sqrt{\eps}$ to numerically conjectured $\eps$. 
\end{abstract}

\keywords{Wasserstein distance, geodesic convexity, rate of convergence, singular limit, porous medium equation, aggregation-diffusion equation, nonlocal-to-local convergence}

\subjclass{35A15, 35B30, 35B35, 35K65, 35B40, 35Q70}


\maketitle

\setcounter{tocdepth}{1}
\section{Introduction}

\subsection{Motivation.} We consider two continuity equations 
\begin{equation}\label{eq:PDE_continuity_equation_1}
\partial_t \mu + \DIV(\mu \, \bvmu[\mu]) = 0,
\end{equation}
\begin{equation}\label{eq:PDE_continuity_equation_2}
\partial_t \nu + \DIV(\nu \, \bvnu [\nu]) = 0.
\end{equation}
Here, $\mu, \nu: [0,T] \times \Omega \to \R$ are solutions to \eqref{eq:PDE_continuity_equation_1}--\eqref{eq:PDE_continuity_equation_2} on $[0,T]\times\Omega$ while $\bvmu, \bvnu$ are $\R^d$-valued vector fields. We use the usual notation $\mu_t$, $\nu_t$ to express the dependence of $\mu$ and $\nu$ on time $t$. The notation ${\bf v}^{\mu}[\mu]$ means that $\bvmu$ is allowed to depend on variables $t$, $x$ as well as on the solution $\mu$ (via its spatial derivatives of arbitrary order as well as nonlocal spatial operators such as convolutions). This allows to cover the cases such as
$$
{\bf v}[\mu] = -\frac{m}{m-1} \, \nabla (\mu)^{m-1}, \qquad \qquad  {\bf v}[\mu] = -\nabla \mu \ast \omega,
$$
corresponding to the porous medium equation \cite{MR2286292} and aggregation equation \cite{carrillo2019aggregation}, respectively. We will make more assumptions on $\bvmu, \bvnu$ in Assumption \ref{eq:assumption_simplified_formula} (together with \eqref{eq:particular_vector_fields_Wasserstein_gf}) and in Assumptions~\ref{ass:BIG_regularity_of_sln_and_vf}--\ref{ass:semigroup_second_map}. Nevertheless, the cases of porous medium and aggregation equations are covered by our theory.\\

The set $\Omega$ can be $\Rd$, $R\,\Td$ or a smooth bounded domain of $\Rd$ and in the latter case, \eqref{eq:PDE_continuity_equation_1}--\eqref{eq:PDE_continuity_equation_2} has to be equipped with appropriate Neumann boundary conditions
\begin{equation}\label{eq:boundary_cond_1_intro}
\mu \, \bvmu[\mu] \cdot \bn = 0 \mbox{ on } \partial \Omega,
\end{equation}
\begin{equation}\label{eq:boundary_cond_2_intro}
\nu \, \bvnu[\nu] \cdot \bn = 0 \mbox{ on } \partial \Omega,
\end{equation}
where $\bn$ is the outer normal vector to the boundary $\partial \Omega$. \\

The target of this paper is to provide a new formula for the Wasserstein distance $\mathcal{W}_p(\mu_t, \nu_t)$ in terms of the difference of the vector fields $\bvmu - \bvnu$. We recall that for two probability measures $\mu, \nu \in \mathcal{P}_p(\Omega)$ and $p \in [1,\infty)$, the Wasserstein distance $\mathcal{W}_p(\mu, \nu)$ is defined as 
$$
\mathcal{W}^p_p(\mu, \nu) = \inf\left\{ \int_{\Omega\times\Omega} |x-y|^p \diff \pi(x,y) \right\},
$$
where the infimum is taken over all probability measures $\pi \in \mathcal{P}(\Omega\times\Omega)$ such that $\pi(A\times\Omega) = \mu(A)$, $\pi(\Omega\times B) = \nu(B)$ and $\mathcal{P}_p(\Omega)$ is the space of probability measures $\mu$ on $\Omega$ such that $\int_{\Omega} |x|^p \diff \mu(x) < \infty$. Formulas for $\mathcal{W}_p(\mu_t, \nu_t)$ lead to stability estimates for the PDEs \eqref{eq:PDE_continuity_equation_1}--\eqref{eq:PDE_continuity_equation_2}, yielding rates of convergence for numerical methods and singular limits. Indeed, PDEs of the form \eqref{eq:PDE_continuity_equation_1}--\eqref{eq:PDE_continuity_equation_2} and their limits arise in several applications like mathematical description of tumor growth \cite{MR3695889, MR3162474, MR4188329, MR4324293} and numerical methods to simulate the diffusion process (so-called blob method) \cite{Ol,MR1821479,MR4858611, carrillo2024nonlocal,carrillo2023nonlocal, MR3913840,Hecht2023porous, burger2022porous}. The most famous Benamou-Brenier formula \cite{MR1738163} combines the optimal transport theory and transportation nature of \eqref{eq:PDE_continuity_equation_1}--\eqref{eq:PDE_continuity_equation_2}. Although its huge impact on the field, particularly on the numerical optimal transport, it does not seem to be directly useful when comparing solutions to two different PDEs. Another, more sophisticated approach is based on the formula on $\frac{\diff}{\diff t} \mathcal{W}_p^p(\mu_t, \nu_t)$ (see \cite[Theorem 5.24]{MR3409718}) which after appropriate manipulations has allowed to obtain rates of convergence for the incompressible limit of porous medium equations \cite{david2024improved} and nonlocal approximation of the quadratic porous medium equation \cite{amassad2025deterministic}. A slightly different idea to estimate $\mathcal{W}_2(\mu_t, \nu_t)$ is to apply the Evolutionary Variational Inequality which also proved to be useful for the aforementioned nonlocal approximation \cite{PME_1d_Brinkman_Darcy_rate}. We note that, beyond their applications in singular limits, stability estimates for the difference between two solutions also provide key convergence and regularity guarantees in the study of inverse problems for PDEs, whether through optimization-based methods \cite{MR4896516} or Bayesian approaches \cite{newwork_Zuza_Piotr_AMC}.\\

Here, we provide a new formula to estimate $\mathcal{W}_p(\mu_t, \nu_t)$ which can be applied directly to the PDEs and allows to obtain stability/continuity estimates only by means of elementary algebraic manipulations.   \\

\subsection{The main result for 2-Wasserstein gradient flows} 

We first present the result for \eqref{eq:PDE_continuity_equation_1}--\eqref{eq:PDE_continuity_equation_2} with $p=2$ and a particular form of velocity field $\bvnu[\nu]$
\begin{equation}\label{eq:particular_vector_fields_Wasserstein_gf}
\bvnu[\nu] = -\nabla\frac{\delta \mathcal{G}}{\delta \nu}[\nu],
\end{equation}
where $\mathcal{G}: \mathcal{P}_2(\Omega) \to (-\infty, \infty]$, $\frac{\delta \mathcal{G}}{\delta \nu}$ is its first variation and we write $\mathcal{D}(\mathcal{G})$ for its domain. This particular choice for $\bvnu[\nu]$ allows us to make no assumptions on the form of velocity field $\bvmu[\mu]$. We also recall that $\Omega$ is $\Rd$, its bounded smooth domain or a periodic domain $R\,\Td$.

\begin{Ass}\label{eq:assumption_simplified_formula}
Let $	T>0$. We assume that:
\begin{enumerate}[label=($\text{A}_{\arabic*}^{\text{Wass}}$)]
\item\label{ass:regularity_solutions:add_sect_alternative_proof} $\mu_t, \nu_t \in \mathcal{P}_2(\Omega)\cap L^1(\Omega)$ solve \eqref{eq:PDE_continuity_equation_1}--\eqref{eq:PDE_continuity_equation_2} with some velocity field $\bvmu[\mu]$ and a velocity field $\bvnu[\nu]$ of the form \eqref{eq:particular_vector_fields_Wasserstein_gf} and that the functions $\bvmu[\mu_t]\,\sqrt{\mu_t}$, $\nabla\frac{\delta \mathcal{G}}{\delta \nu}[\nu_t] \,\sqrt{\nu_t}$, $\nabla\frac{\delta \mathcal{G}}{\delta \nu}[\mu_t] \,\sqrt{\mu_t}$ belong to $L^2((0,T)\times\Omega)$ while $\mu_t, \nu_t \in \mathcal{D}(\mathcal{G})$ for a.e. $t\in [0,T]$,
\item\label{eq:geodesic_conv_G_sect:alternative_proof} $\mathcal{G}: \mathcal{P}_2(\Omega) \to (-\infty, \infty]$ is $\lambda$-geodesically convex in $(\mathcal{P}_2(\Omega), \mathcal{W}_2)$ for some $\lambda \in \R$, i.e. for any $\mu, \nu \in \mathcal{P}_2(\Omega)$ and the 2-Wasserstein geodesic $\{\gamma_s\}_{s\in[0,1]}$ connecting $\mu$ and $\nu$ we have 
$$
\mathcal{G}[\gamma_s] \leq s\, \mathcal{G}[\nu]+(1-s)\, \mathcal{G}[\mu] - \frac{s(1-s)\lambda}{2} \mathcal{W}_2^2(\mu, \nu),
$$
\item\label{eq:derivative_via_1st_var_sect:alternative_proof} for a.e. $t \in (0,T)$, the 2-Wasserstein geodesic $\{\gamma_s\}_{s\in[0,1]}$ connecting $\mu_t$ and $\nu_t$ satisfies for $s = 0,1$ 
\begin{equation}\label{eq:slope_G_geodesic_general_assumption_1}
\frac{\diff}{\diff s} \mathcal{G}[\gamma_s] \Big|_{s=0} \geq -\int_{\Omega} \nabla \frac{\delta \mathcal{G}}{\delta \nu}[\mu_t]\, \nabla \varphi(x) \diff \mu_t(x),
\end{equation}
\begin{equation}\label{eq:slope_G_geodesic_general_assumption_2}
\frac{\diff}{\diff s} \mathcal{G}[\gamma_s] \Big|_{s=1} \leq \int_{\Omega} \nabla \frac{\delta \mathcal{G}}{\delta \nu}[\nu_t]\, \nabla\phi(x) \diff \nu_t(x),
\end{equation}
where $\varphi$, $\phi$ are Kantorovich potentials for the optimal transport problem with a~quadratic cost $c(x,y)=\frac{1}{2}|x-y|^2$ between $\mu$, $\nu$ and $\nu$, $\mu$, respectively (see below). Here, derivatives in \eqref{eq:slope_G_geodesic_general_assumption_1}--\eqref{eq:slope_G_geodesic_general_assumption_2} are defined via
$$
\frac{\diff}{\diff s} \mathcal{G}[\gamma_s] \Big|_{s=0} = \lim_{h\to 0} \frac{\mathcal{G}[\gamma_h]-\mathcal{G}[\gamma_0]}{h}, \quad \frac{\diff}{\diff s} \mathcal{G}[\gamma_s] \Big|_{s=1} = \lim_{h\to 0} \frac{\mathcal{G}[\gamma_1]-\mathcal{G}[\gamma_{1-h}]}{h}.
$$
These limits exist by $\lambda\, \mathcal{W}_2^2(\mu,\nu)$-convexity of the function $s\mapsto \mathcal{G}[\gamma_s]$ although they may be equal $-\infty$ for $\frac{\diff}{\diff s} \mathcal{G}[\gamma_s] \Big|_{s=0}$ and $+\infty$ for $\frac{\diff}{\diff s} \mathcal{G}[\gamma_s] \Big|_{s=1}$ even if $\gamma_s \in \mathcal{D}(\mathcal{G})$ for all $s\in[0,1]$ (think about $[0,1]\ni x \mapsto -\sqrt{x\,(1-x)}$).
\end{enumerate}
\end{Ass} 

On $\Rd$ (or its bounded domain), the Kantorovich potentials for the cost $c(x,y) = \frac{1}{2}|x-y|^2$ satisfy $\nabla \varphi(x) = x-T(x)$, $\nabla \phi(x) = x-S(x)$ where $T$ is the optimal transport moving $\mu$ onto $\nu$ and $S$ is its inverse, see \cite[Prop.~1.15]{MR3409718}. On the periodic domain, the same is true but $T$ and $S$ are the maps such that their projection on $R\,\Td$ is the optimal transport, see \eqref{eq:Kantorovich_potential_periodic_domain} in Appendix \ref{rem:optimal_transport_periodic_domain}.  \\
  
We remark that condition \ref{eq:derivative_via_1st_var_sect:alternative_proof} can be expected to be satisfied in most cases of interest. Indeed, the geodesic $\gamma_s$ solves $\partial_s \gamma_s + \DIV(\gamma_s {\bf V}_s) = 0$ where ${\bf V}_0 = -\nabla \varphi$, ${\bf V}_1=\nabla \phi$. Therefore, for regular functionals $\mathcal{G}$ (see \cite[Definition 7.12]{MR3409718}) we can compute informally
$$
\frac{\diff}{\diff s} \mathcal{G}[\gamma_s] = \int_{\Omega} \frac{\delta \mathcal{G}}{\delta \nu}[\gamma_s]\, \partial_s \gamma_s \diff x = \int_{\Omega} \nabla \frac{\delta \mathcal{G}}{\delta \nu}[\gamma_s]\, {\bf V}_s \gamma_s \diff x
$$
and we arrive at \ref{eq:derivative_via_1st_var_sect:alternative_proof} by plugging $s=0$ and $s=1$. Furthermore, the condition $\nu_t \in \mathcal{D}(\mathcal{G})$ is usually satisfied for most reasonable functionals. For instance, it holds if $\mathcal{G}$ is lower semicontinuous with respect to $\mathcal{W}_2$ and bounded from below because the minimizing movement scheme is then known to converge \cite[Theorem 4.25]{MR3050280} and the lower semicontinuity gives $\mathcal{G}[\nu_t]<\infty$ for all $t$.\\

We also remark that by inspection of the proof of Theorem \ref{thm:simplified_for_2-Wasserstein}, one can relax condition \ref{eq:geodesic_conv_G_sect:alternative_proof} to require the geodesic convexity of $\mathcal{G}$ only on some (geodesically convex) subset $\mathcal{X} \subset \mathcal{P}_2(\Omega)$ if solutions $\mu_t, \nu_t \in \mathcal{X}$ for all $t \in [0,T]$. Last but not least, we note that instead of assumptions \ref{eq:geodesic_conv_G_sect:alternative_proof} and \ref{eq:derivative_via_1st_var_sect:alternative_proof}, one could require the inequality 
\begin{equation}\label{eq:condition_replacing_A2+A_3_by_Fabian}
 -\int_{\Omega} \nabla \frac{\delta \mathcal{G}}{\delta \nu}[\mu_t]\, \nabla \varphi(x) \diff \mu_t(x) + \lambda \, \mathcal{W}_2^2(\mu_t,\nu_t) \leq \int_{\Omega} \nabla \frac{\delta \mathcal{G}}{\delta \nu}[\nu_t]\, \nabla\phi(x) \diff \nu_t(x),
\end{equation}
with the notation as in \ref{eq:derivative_via_1st_var_sect:alternative_proof}, since this is the only inequality used in the proof of Theorem~\ref{thm:simplified_for_2-Wasserstein} (see \eqref{eq:additional_claim_geodesic_convexity}). However, we are not aware of a direct way to establish \eqref{eq:condition_replacing_A2+A_3_by_Fabian} without appealing to geodesic convexity and estimating the slopes at $s=0$ and $s=1$. For this reason, we prefer to retain both assumptions \ref{eq:geodesic_conv_G_sect:alternative_proof} and \ref{eq:derivative_via_1st_var_sect:alternative_proof}.

 \begin{thm}\label{thm:simplified_for_2-Wasserstein}
Let $\Omega$ be $\Rd$, its bounded smooth domain or $R\,\Td$. Let $\mu_t, \nu_t \subset \mathcal{P}_2(\Omega)\cap L^1(\Omega)$ be solutions to \eqref{eq:PDE_continuity_equation_1}--\eqref{eq:PDE_continuity_equation_2} with velocity field $\bvnu[\nu]$ given by \eqref{eq:particular_vector_fields_Wasserstein_gf} and with initial conditions $\mu_0$, $\nu_0$. We assume that Assumption \ref{eq:assumption_simplified_formula} holds true. Then, for a.e. $t\in (0,T)$,
 \begin{equation}\label{eq:claim_formula_for_Wass_gradient_flows}
\frac{\diff}{\diff t} \mathcal{W}_2(\mu_t, \nu_t) + \lambda \, \mathcal{W}_2(\mu_t,\nu_t) \leq \left( \int_{\Omega} \left| \nabla\frac{\delta \mathcal{G}}{\delta \nu}[\mu_t] + \bvmu[\mu_t] \right|^2 \diff \mu_t(x) \right)^{\frac{1}{2}}. 
\end{equation}
 \end{thm}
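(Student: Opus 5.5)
The plan is to differentiate $\frac12\mathcal{W}_2^2(\mu_t,\nu_t)$ in time using the standard formula for the derivative of the Wasserstein distance along two curves solving continuity equations (\cite[Theorem 5.24]{MR3409718}, or its periodic analogue via \eqref{eq:Kantorovich_potential_periodic_domain}). Under the integrability in \ref{ass:regularity_solutions:add_sect_alternative_proof}, both $t\mapsto\mu_t$ and $t\mapsto\nu_t$ are absolutely continuous curves in $(\mathcal{P}_2(\Omega),\mathcal{W}_2)$ with $L^2$ velocities. Hence for a.e. $t$
\begin{equation*}
\frac{\diff}{\diff t}\frac12\mathcal{W}_2^2(\mu_t,\nu_t) = \int_\Omega \nabla\varphi\cdot \bvmu[\mu_t]\diff\mu_t + \int_\Omega \nabla\phi\cdot\bvnu[\nu_t]\diff\nu_t ,
\end{equation*}
with $\varphi,\phi$ the Kantorovich potentials from \ref{eq:derivative_via_1st_var_sect:alternative_proof}. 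On bounded domains the no-flux conditions \eqref{eq:boundary_cond_1_intro}--\eqref{eq:boundary_cond_2_intro} ensure that no boundary terms appear.

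Next, add and subtract $\nabla\frac{\delta\mathcal{G}}{\delta\nu}[\mu_t]$ in the first integral:
\begin{equation*}
\int \nabla\varphi\cdot\bvmu[\mu_t]\diff\mu_t = \int\nabla\varphi\cdot\Big(\bvmu[\mu_t]+\nabla\tfrac{\delta\mathcal{G}}{\delta\nu}[\mu_t]\Big)\diff\mu_t - \int\nabla\varphi\cdot\nabla\tfrac{\delta\mathcal{G}}{\delta\nu}[\mu_t]\diff\mu_t .
\end{equation*}
The second integral in the derivative equals $-\int\nabla\phi\cdot\nabla\frac{\delta\mathcal{G}}{\delta\nu}[\nu_t]\diff\nu_t$ by \eqref{eq:particular_vector_fields_Wasserstein_gf}. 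The key inequality is then
\begin{equation}\label{eq:additional_claim_geodesic_convexity}
-\int\nabla\varphi\cdot\nabla\tfrac{\delta\mathcal{G}}{\delta\nu}[\mu_t]\diff\mu_t-\int\nabla\phi\cdot\nabla\tfrac{\delta\mathcal{G}}{\delta\nu}[\nu_t]\diff\nu_t\le -\lambda\,\mathcal{W}_2^2(\mu_t,\nu_t).
\end{equation}

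To prove \eqref{eq:additional_claim_geodesic_convexity}, set $g(s)=\mathcal{G}[\gamma_s]$. By \ref{eq:geodesic_conv_G_sect:alternative_proof}, $g(s)-\frac{\lambda}{2}s^2\mathcal{W}_2^2(\mu_t,\nu_t)$ is convex on $[0,1]$, so the one-sided derivatives satisfy $g'(1)-g'(0)\ge\lambda\mathcal{W}_2^2(\mu_t,\nu_t)$. We need $g(0),g(1)$ finite, which holds because $\mu_t,\nu_t\in\mathcal{D}(\mathcal{G})$. Combining this with \eqref{eq:slope_G_geodesic_general_assumption_1}--\eqref{eq:slope_G_geodesic_general_assumption_2} gives
\begin{equation*}
\lambda\mathcal{W}_2^2\le g'(1)-g'(0)\le \int\nabla\tfrac{\delta\mathcal{G}}{\delta\nu}[\nu_t]\cdot\nabla\phi\diff\nu_t+\int\nabla\tfrac{\delta\mathcal{G}}{\delta\nu}[\mu_t]\cdot\nabla\varphi\diff\mu_t .
\end{equation*}
This is exactly \eqref{eq:additional_claim_geodesic_convexity}. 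The infinite cases are harmless: if $g'(0)=-\infty$ the lower bound \eqref{eq:slope_G_geodesic_general_assumption_1} would contradict the finiteness of its right-hand side, which is guaranteed by the $L^2$ assumptions. The right-hand sides are finite by Cauchy--Schwarz, since $\int|\nabla\varphi|^2\diff\mu_t=\mathcal{W}_2^2$.

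Putting everything together and applying Cauchy--Schwarz with $\int|\nabla\varphi|^2\diff\mu_t=\mathcal{W}_2^2(\mu_t,\nu_t)$ gives
\begin{equation*}
\mathcal{W}_2\frac{\diff}{\diff t}\mathcal{W}_2+\lambda\mathcal{W}_2^2\le \mathcal{W}_2\Big(\int|\nabla\tfrac{\delta\mathcal{G}}{\delta\nu}[\mu_t]+\bvmu[\mu_t]|^2\diff\mu_t\Big)^{1/2}.
\end{equation*}
Where $\mathcal{W}_2(\mu_t,\nu_t)>0$ we divide by $\mathcal{W}_2$. On the set where $\mathcal{W}_2(\mu_t,\nu_t)=0$, the distance is absolutely continuous and attains its minimum, so its derivative vanishes at a.e. such $t$ and the claim is trivial.

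The main obstacle is the first step: justifying the derivative formula for $\mathcal{W}_2^2$ under only the stated $L^2$ integrability. One has to check that $t\mapsto\mathcal{W}_2^2(\mu_t,\nu_t)$ is absolutely continuous and to identify its derivative through the potentials. The periodic case is also delicate, because there the optimal maps are only defined modulo lifts. I would rely on \cite[Theorem 5.24]{MR3409718} and its periodic adaptation in the Appendix, after verifying that $\int_0^T\!\int|\bvmu|^2\diff\mu_t\diff t<\infty$ and the analogous bound for $\nu$.
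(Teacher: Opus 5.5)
Your proposal is correct and follows the paper's proof essentially step by step. You use the time-derivative formula for $\frac12\mathcal{W}_2^2(\mu_t,\nu_t)$ (the paper cites \cite[Theorem 8.4.7]{MR2129498}), then the key inequality \eqref{eq:additional_claim_geodesic_convexity}, obtained by comparing the endpoint slopes of $s\mapsto\mathcal{G}[\gamma_s]$ through $\lambda$-convexity and \ref{eq:derivative_via_1st_var_sect:alternative_proof}, and finally Cauchy--Schwarz with $\int_\Omega|\nabla\varphi|^2\diff\mu_t=\mathcal{W}_2^2(\mu_t,\nu_t)$; your treatment of infinite slopes and of times where $\mathcal{W}_2(\mu_t,\nu_t)=0$ only makes explicit small points the paper leaves implicit.
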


The main advantage of Theorem \ref{thm:simplified_for_2-Wasserstein} is that both velocity fields are evaluated at the same solution $\mu_t$. This is reminiscent of the relative entropy method \cite{MR3615546}, where the distance between two solutions is estimated by a functional that depends only on one of them, typically the one with higher regularity. Another advantage, comparing to previous formulas, is that we estimate $\mathcal{W}_2(\mu_t,\nu_t)$ rather than $\mathcal{W}_2^2(\mu_t,\nu_t)$. This has direct consequences for example in Theorem \ref{thm:rate_of_conv_nonlocal_to_local} where we improve the known rate of $\sqrt{\eps}$ to $\eps$. \\

We remark that this formula for aggregation-diffusion equations (with fixed nonlinearity) has been obtained in \cite[Appendix A.2]{MR4896516}. Moreover, a similar formula appeared in \cite[Lemma 5.2]{MR3864212} under the assumption that one of the velocities, say $\nabla\frac{\delta \mathcal{G}}{\delta \nu}[\mu_t]$, belongs to $L^{\infty}(0,T; W^{1,\infty}(\Td))$. However, this norm also appears in the final estimate, making the result inapplicable, for instance, to the porous medium equation. Finally, in \cite{MR3522009}, the Authors obtain stability estimates for the particular case of the nonlinear Fokker-Planck equations in the total variation norm and in the Wasserstein distance with respect to both the diffusion matrix and the velocity (see also \cite[Prop.~3.1]{MR2672640} and \cite[Theorem~2.18]{MR3846438} for results in a similar direction). \\

The proof of Theorem \ref{thm:simplified_for_2-Wasserstein} is based on the aforementioned formula for $\frac{\diff}{\diff t} \mathcal{W}_2^2(\mu_t, \nu_t)$ (see \cite[Theorem 5.24]{MR3409718}). However, thanks to \ref{eq:derivative_via_1st_var_sect:alternative_proof} and the Cauchy-Schwarz inequality, we are able to estimate $\mathcal{W}_2(\mu_t, \nu_t)$ rather than $\mathcal{W}_2^2(\mu_t, \nu_t)$.

\subsection{The main result for general nonlinear continuity equations.} Surprisingly, we can also prove a formula like \eqref{eq:claim_formula_for_Wass_gradient_flows} for general nonlinear continuity equations \eqref{eq:PDE_continuity_equation_1}--\eqref{eq:PDE_continuity_equation_2}, without imposing the Wasserstein gradient flow structure. Let us first present the assumptions. The first one is of technical nature and roughly speaking, it says that \eqref{eq:PDE_continuity_equation_1}--\eqref{eq:PDE_continuity_equation_2} have smooth solutions for initial conditions forming a large, possibly dense set of $\mathcal{P}_p(\Omega)$. 

\begin{Ass}\label{ass:BIG_regularity_of_sln_and_vf}
We assume that there exist two sets $\mathcal{A}_1, \mathcal{A}_2 \subset \mathcal{P}_p(\Omega)\cap L^1(\Omega)$ such that $\mathcal{A}_1 \subset \mathcal{A}_2$ and if $\mu_0\in\mathcal{A}_1$, $\nu_0 \in \mathcal{A}_2$ then \eqref{eq:PDE_continuity_equation_1} and \eqref{eq:PDE_continuity_equation_2} admit unique (weak) solutions $\mu_t \in \mathcal{A}_1$, $\nu_t \in \mathcal{A}_2$ with initial conditions~$\mu_0$ and $\nu_0$, respectively. Moreover, if $\mu_0 = \nu_0$, the solutions $\mu_t$, $\nu_t$ satisfy 
\begin{enumerate}[label=($\text{A}_{\arabic*}^{\text{gen}}$)]
\item\label{ass:weak_continuity} the maps $t \mapsto \mu_t, \nu_t$ are narrowly continuous,
\item\label{ass:regularity_vector_fields} there exists $h>0$ and $\delta>0$ such that the maps $(t,x)\mapsto \bvmu[\mu_t], \bvnu[\nu_t]$ belong to $L^{2+\delta}(0,h; \BL(\Omega))$, where $\BL(\Omega)$ is the space of bounded Lipschitz functions on $\Omega$ equipped with the norm
\begin{equation}\label{eq:norm_BL}
\|f \|_{\BL(\Omega)} = \|f\|_{L^{\infty}(\Omega)} + \Lip(f), \qquad \Lip(f):= \sup_{x,y\in \Omega,\, x\neq y} \frac{|f(x)-f(y)|}{|x-y|},
\end{equation}
\item\label{ass:C_admissibility} if $\rho \in \mathcal{A}_1$ then $\left|\bvmu[\rho]- \bvnu[\rho]  \right|^p \in C(\Omega) \cap L^{\infty}(\Omega)$,
\item\label{ass:continuity_difference} there exists $h>0$ such that $(t,x)\mapsto \bvnu[\nu_t] - \bvmu[\mu_t]$ is in $L^{\infty}((0,h)\times \Omega)$ and for each compact subset $K \subset \overline{\Omega}$ we have
$$
\esssup_{0\leq \tau \leq h} \Big\| \big|\bvnu[\nu_\tau ] - \bvmu[\mu_\tau]\big|^p - \big|\bvnu[\mu_0] - \bvmu[\mu_0]\big|^p \Big\|_{L^{\infty}(K)} \to 0  \mbox{ as } h \to 0.
$$
\item\label{ass:pointwise_boundary_cond} if $\Omega$ is a bounded domain, the boundary conditions $\bvmu[\mu_t] \cdot \bn = 0$, $\bvnu[\nu_t] \cdot \bn \leq 0$ on $\partial \Omega$ hold pointwisely for a.e. $t \in [0,h]$ for some small $h>0$.
\end{enumerate}
\end{Ass}

\begin{Ass}\label{ass:abs_cont_first_map}
We assume that for any initial conditions $\mu_0 \in \mathcal{A}_1$, $\nu_0 \in \mathcal{A}_2$, the maps $t~\mapsto~\mu_t$, $t\mapsto \nu_t$ solving \eqref{eq:PDE_continuity_equation_1}--\eqref{eq:PDE_continuity_equation_2} are absolutely continuous in $\mathcal{W}_p$, i.e. for all $\mu_0~\in~\mathcal{A}_1, \nu_0~\in~\mathcal{A}_2$, there exists a function $h: [0,\infty) \to \R$, $h \in L^1_{\text{loc}}([0,\infty))$, $h \geq 0$ such that for all $s, t \in [0,T]$, $s<t$ we have
$$
\mathcal{W}_p(\mu_t, \mu_s), \mathcal{W}_p(\nu_t, \nu_s) \leq \int_s^t h(u) \diff u.
$$
\end{Ass}

Our main additional assumption is to require that the PDE \eqref{eq:PDE_continuity_equation_2} has a semigroup structure with sufficient continuity properties. To this end, we define the map $\mathcal{S}_{t}: \mathcal{P}(\Omega)\to\mathcal{P}(\Omega)$ such that $\mathcal{S}_{t} \rho_0$ is the solution of \eqref{eq:PDE_continuity_equation_2} at time $t$ with condition $\rho_0$ at time $t=0$. The semigroup property reads $\mathcal{S}_{t+s} = \mathcal{S}_{t}  \circ  \mathcal{S}_{s}$.

\begin{Ass}\label{ass:semigroup_second_map}
We assume that \eqref{eq:PDE_continuity_equation_1} and \eqref{eq:PDE_continuity_equation_2} generates a semigroup (in the sense of Definition \ref{def:semigroup_autonomous}) on $(\mathcal{A}_1, \mathcal{W}_p)$ and $(\mathcal{A}_2, \mathcal{W}_p)$ respectively. Moreover, for \eqref{eq:PDE_continuity_equation_2}, the semigroup is a Lipschitz semigroup (in the sense of Definition \ref{def:semigroup_Lipschitz-AC-autonomous}) on the space $(\mathcal{A}_2, \mathcal{W}_p)$. More precisely, for each $t$, there exists a~constant $K(t)>0$ such that the semigroup $\mathcal{S}_t$ generated by \eqref{eq:PDE_continuity_equation_2} satisfies
\begin{equation}\label{eq:Wasserstein_Lipschitz-AC}
\mathcal{W}_p(\mathcal{S}_{t} \nu^1_0, \mathcal{S}_{t} \nu^2_0) = \mathcal{W}_p( \nu^1_t, \nu^2_t) \leq K(t) \, \mathcal{W}_p(\nu^1_0,\nu^2_0).
\end{equation}
Moreover, we assume that $t\mapsto K(t)$ is continuous.
\end{Ass}

The second main result of this work reads:

\begin{thm}\label{thm:main}
Let $\Omega$ be $\Rd$, its bounded smooth domain or $R\,\Td$. Suppose that Assumptions~\ref{ass:BIG_regularity_of_sln_and_vf}, \ref{ass:abs_cont_first_map} and \ref{ass:semigroup_second_map} are satisfied for $p\geq 1$ and let $\mathcal{A}_1$ be the set as in Assumption \ref{ass:BIG_regularity_of_sln_and_vf}. Let $\mu_t, \nu_t$ be solutions to \eqref{eq:PDE_continuity_equation_1} and \eqref{eq:PDE_continuity_equation_2} with the same initial condition $\mu_0=\nu_0 \in \mathcal{A}_1$ and boundary conditions \eqref{eq:boundary_cond_1_intro}--\eqref{eq:boundary_cond_2_intro} if $\Omega$ is a bounded domain. Then,
$$
\mathcal{W}_p(\mu_t, \nu_t) \leq  \int_0^t K(t-s)\, \left( \int_{\Omega} \left|\bvnu[\mu_s]- \bvmu[\mu_s]  \right|^p \diff \mu_s  \right)^{1/p} \diff s,
$$
where $K$ is the constant from \eqref{eq:Wasserstein_Lipschitz-AC}.
\end{thm}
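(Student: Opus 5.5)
The plan is a Lipschitz-semigroup ``Duhamel'' argument: compare the $\mu$-trajectory with the $\nu$-semigroup started at intermediate points of it. Write $\mathcal{S}_t$ for the semigroup of \eqref{eq:PDE_continuity_equation_2} from Assumption~\ref{ass:semigroup_second_map}, so that $\nu_t=\mathcal{S}_t\mu_0$. Consider the auxiliary function
$$
F(s) := \mathcal{W}_p\big(\mathcal{S}_{t-s}\mu_s,\ \mu_t\big), \qquad s\in[0,t].
$$
We have $F(t)=0$ and $F(0)=\mathcal{W}_p(\nu_t,\mu_t)$. The key observation is that $\mathcal{S}_{t-s}\mu_s=\mathcal{S}_{t-s-h}\mathcal{S}_h\mu_s$. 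Combining the triangle inequality with \eqref{eq:Wasserstein_Lipschitz-AC}, this gives
$$
\big|F(s)-F(s+h)\big| \le \mathcal{W}_p\big(\mathcal{S}_{t-s-h}\mathcal{S}_h\mu_s,\ \mathcal{S}_{t-s-h}\mu_{s+h}\big)\le K(t-s-h)\,\mathcal{W}_p\big(\mathcal{S}_h\mu_s,\ \mu_{s+h}\big).
$$
Here $\mu_s\in\mathcal{A}_1\subset\mathcal{A}_2$, so $\mathcal{S}_h\mu_s$ is well defined. Everything therefore reduces to a local, short-time estimate for two flows started from the same datum $\rho:=\mu_s$.

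The short-time estimate I aim for is
$$
\limsup_{h\to0}\frac{1}{h}\,\mathcal{W}_p\big(\mathcal{S}_h\rho,\ \mu^\rho_h\big)\le \Big(\int_\Omega |\bvnu[\rho]-\bvmu[\rho]|^p\diff\rho\Big)^{1/p},
$$
where $\mu^\rho$ denotes the solution of \eqref{eq:PDE_continuity_equation_1} started from $\rho$; by the semigroup property it coincides with $\tau\mapsto\mu_{s+\tau}$. To prove this I would use the Lagrangian flows $X_\tau,Y_\tau$ of the vector fields $\bvmu[\mu^\rho_\tau]$ and $\bvnu[\nu^\rho_\tau]$. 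These are well defined and Lipschitz thanks to \ref{ass:regularity_vector_fields}, and they stay in $\overline\Omega$ on bounded domains by \ref{ass:pointwise_boundary_cond}. By uniqueness of solutions to linear continuity equations with Lipschitz fields, $\mu^\rho_h=X_h\#\rho$ and $\nu^\rho_h=Y_h\#\rho$. The coupling $(X_h,Y_h)\#\rho$ then gives
$$
\mathcal{W}_p^p(\nu^\rho_h,\mu^\rho_h)\le\int|Y_h(x)-X_h(x)|^p\diff\rho(x).
$$
Writing $Y_h-X_h=\int_0^h\big(\bvnu[\nu_\tau](Y_\tau)-\bvmu[\mu_\tau](X_\tau)\big)\diff\tau$, I split off the term $\bvnu[\nu_\tau](Y_\tau)-\bvnu[\nu_\tau](X_\tau)$. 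This term is bounded by $\Lip(\bvnu[\nu_\tau])\,|Y_\tau-X_\tau|$ and is handled by Gronwall, whose factor $\exp(\int_0^h\Lip)$ tends to $1$ by the $L^{2+\delta}$ integrability. What remains is $h^{-1}\int_0^h (\bvnu[\nu_\tau]-\bvmu[\mu_\tau])(X_\tau)\diff\tau$. Its $L^p(\rho)$ norm is controlled by Jensen/Minkowski, and the uniform convergence on compacts in \ref{ass:continuity_difference}, together with $X_\tau\to\mathrm{id}$ uniformly, lets me replace it by $|\bvnu[\rho]-\bvmu[\rho]|^p$ integrated against $\rho$. The $L^\infty$ bound in \ref{ass:continuity_difference} plus tightness of $\rho$ handles the non-compact part.

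Finally, Assumption~\ref{ass:abs_cont_first_map} together with continuity of $K$ makes $F$ absolutely continuous. Indeed, the triangle inequality bounds $|F(s)-F(s+h)|\le K(t-s-h)\big(\mathcal{W}_p(\mathcal{S}_h\mu_s,\mu_s)+\mathcal{W}_p(\mu_s,\mu_{s+h})\big)$, and both terms are $O(\int_s^{s+h} h(u)\diff u)$ with the integrable function $h$ from Assumption~\ref{ass:abs_cont_first_map}. Hence $\mathcal{W}_p(\mu_t,\nu_t)=F(0)-F(t)\le\int_0^t|F'(s)|\diff s$. The bound on $|F'(s)|$ from the short-time estimate, together with continuity of $K$, yields the claim. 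The main obstacle is the short-time estimate: it needs \ref{ass:regularity_vector_fields}--\ref{ass:continuity_difference} applied at the datum $\mu_s$ for a.e. $s$, which requires $\mu_s\in\mathcal{A}_1$ (guaranteed by the assumption of invariance). The delicate points are the flow construction up to the boundary and the passage from compact $K$ to all of $\Omega$.
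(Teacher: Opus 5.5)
Your proof is correct, but it obtains the key short-time estimate by a different route than the paper.

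\textbf{Comparison with the paper.} The global reduction is the same. The paper applies Lemma~\ref{lem:bressan_estimate_between_map_semigroup_aut}, whose proof runs your argument with $\Phi(s)=\mathcal{W}_p(\mathcal{S}_{t-s}\mu_s,\mathcal{S}_t\mu_0)$ instead of $F(s)=\mathcal{W}_p(\mathcal{S}_{t-s}\mu_s,\mu_t)$; this difference is immaterial. The routes diverge on $\mathcal{W}_p(\mu_{s+h},\mathcal{S}_h\mu_s)/h$. The paper introduces a third curve $\rho^3_\tau$, started at $\mu_{s+h}$ and transported by the difference field $\bvnu[\rho^2_\tau]-\bvmu[\rho^1_\tau]$. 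It bounds $\mathcal{W}_p(\rho^1_h,\rho^3_h)$ by the rescaled Benamou--Brenier formula, and shows $\mathcal{W}_p(\rho^3_h,\rho^2_h)=O\big(h^{2(1+\delta)/(2+\delta)}\big)=o(h)$ via the Trotter--Kato type Proposition~\ref{thm:trotter-kato-estimate}. It then has to pass to the limit in integrals against $\rho^3_\tau$, which depends on $h$.

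Your synchronous coupling $(X_h,Y_h)^{\#}\mu_s$ with Gronwall removes both the auxiliary curve and the splitting estimate. This buys several things:
\begin{itemize}
\item Only $\Lip(\bvnu[\nu_\tau])$ enters, and $L^1(0,h;\BL(\Omega))$ integrability already suffices. The $2+\delta$ in \ref{ass:regularity_vector_fields} is never needed, although you cite it.
\item The limit is taken against $\mu_{s+\tau}=X_\tau^{\#}\mu_s$, which does not depend on $h$. Narrow continuity together with \ref{ass:C_admissibility}--\ref{ass:continuity_difference} is enough.
\item On a bounded domain you only need each field to point inward. The paper needs the difference field to do so, which is why \ref{ass:pointwise_boundary_cond} imposes $\bvmu\cdot\bn=0$.
\end{itemize}
The paper's route yields a reusable operator-splitting estimate, but for this theorem yours is the more economical proof.

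\textbf{One caveat.} Your justification that $F$ is absolutely continuous does not work as written. The integrable function in Assumption~\ref{ass:abs_cont_first_map} depends on the initial datum, so $\mathcal{W}_p(\mathcal{S}_h\mu_s,\mu_s)$ is not bounded by $\int_s^{s+h}$ of a single $L^1$ function. The paper's proof of Lemma~\ref{lem:bressan_estimate_between_map_semigroup_aut} asserts this absolute continuity without argument, so the issue is shared.

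It is easy to bypass:
\begin{itemize}
\item $F$ is continuous. This follows from your Lipschitz bound, the continuity of $K$, and the continuity of $s\mapsto\mu_s$ and of each orbit $\tau\mapsto\mathcal{S}_\tau\mu_s$.
\item Your short-time estimate holds at every $s$, not just almost every $s$, because every $\mu_s\in\mathcal{A}_1$. Hence the lower right Dini derivative of $F$ is bounded below by $-g(s)=-K(t-s)\bigl(\int_\Omega|\bvnu[\mu_s]-\bvmu[\mu_s]|^p\diff\mu_s\bigr)^{1/p}$ everywhere, and $g$ is finite by \ref{ass:C_admissibility}.
\item A continuous function with this property and $g$ integrable satisfies $F(t)-F(0)\geq-\int_0^t g$ (standard, e.g.\ via Vitali--Carath\'eodory). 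This gives the claim without any absolute continuity.
\end{itemize}
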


We should stress that Assumption~\ref{ass:BIG_regularity_of_sln_and_vf} is of technical nature. If solutions to \eqref{eq:PDE_continuity_equation_1}--\eqref{eq:PDE_continuity_equation_2} can be approximated by smooth ones, then the final conclusion of Theorem \ref{thm:main} holds for larger class of solutions than the ones belonging to $\mathcal{A}_1$, $\mathcal{A}_2$ since the constant $K$ does not depend on the regularity in Assumption~\ref{ass:BIG_regularity_of_sln_and_vf}.\\

Although Theorem \ref{thm:main} looks more complicated, it has one clear advantage over Theorem~\ref{thm:simplified_for_2-Wasserstein}. The latter involves estimating $\frac{\diff}{\diff s} \mathcal{G}[\gamma_s] \Big|_{s=0}$, which usually requires some regularity and can be sometimes quite hard to check. In fact, already in our first application - the proof of Theorem \ref{thm:PME} in Section~\ref{sect:applications_to_PME} - we will see that checking the assumptions of Theorem \ref{thm:main} can sometimes be easier than those of Theorem \ref{thm:simplified_for_2-Wasserstein}. On the other hand, Theorem \ref{thm:simplified_for_2-Wasserstein} applies to a priori mildly regular solutions, so it’s useful when there is no convenient approximation scheme, as in the case of the aggregation-diffusion system in Section \ref{subsect:general_A_D_intro}.\\

Comparing the settings of Theorem \ref{thm:simplified_for_2-Wasserstein} and Theorem \ref{thm:main}, condition \ref{ass:regularity_solutions:add_sect_alternative_proof} on regularity of solutions corresponds to Assumption \ref{ass:BIG_regularity_of_sln_and_vf} (although the latter requires much more regularity) while Assumption \ref{ass:semigroup_second_map} corresponds to the geodesic convexity condition \ref{eq:geodesic_conv_G_sect:alternative_proof}.\\

Regarding the method of the proof, the Lipschitz continuity of the semigroup $\mathcal{S}_t$ that we require allows to use a formula from the semigroup theory (see Lemma~\ref{lem:bressan_estimate_between_map_semigroup_aut}) which combined with the Benamou-Brenier formula yields our main result (Theorem~\ref{thm:main}).

\begin{rem}\label{rem:velocity_field_depend_on_time}
Theorem \ref{thm:main} does not work for velocity fields that depend on time explicitly because of Assumption \ref{ass:abs_cont_first_map}, where the semigroup is required to be autonomous in time. Nevertheless, one can probably extend the result to this case since the main tool from the theory of semigroups that we use in the proof, Lemma \ref{lem:bressan_estimate_between_map_semigroup_aut}, can be extended to nonautonomous semigroups, see \cite[Proposition I.9]{MR4309603}. For the sake of simplicity, we decided to concentrate on the autonomous case. 
\end{rem}

\subsection{Applications to the porous medium equation.}\label{subsect:PME_introduction} Theorems \ref{thm:simplified_for_2-Wasserstein} and \ref{thm:main} can be applied to easily deduce stability estimates for numerous PDEs. Here, to illustrate, we consider two porous medium equations of the form
\begin{equation}\label{eq:PME_1}
\partial_t \mu = \Delta \mu^m,
\end{equation}
\begin{equation}\label{eq:PME_2}
\partial_t \nu = \Delta \nu^{n}.
\end{equation}
If considered on a bounded domain, they are equipped with boundary conditions:
\begin{equation}\label{eq:Neuman_boundary_conditions_PME}
\mu\, \nabla \mu^{m-1} \cdot \bn = \nu \, \nabla \nu^{n-1} \cdot \bn = 0 \mbox{ on } \partial \Omega.
\end{equation}
We will consider also more general variant of \eqref{eq:PME_1}--\eqref{eq:PME_2} with advection and nonlocal interactions in Subsection \ref{subsect:general_A_D_intro} but for the sake of simplicity, we want to discuss the simpler case first. \\

\noindent The continuity of \eqref{eq:PME_1}, \eqref{eq:PME_2} with respect to the exponents $n$, $m$ was first established in \cite{MR604277}. In \cite[Theorem 3.2]{MR772052} the first quantitative result on H{\"o}lder continuity with respect to the exponent was given which was further extended to Lipschitz continuity in \cite{MR1669570, MR2187640,dkebiec2025lipschitz}. Here, we provide a quantitative result in the 2-Wasserstein distance. 

\begin{thm}\label{thm:PME}
Let $\mu_0 \in \mathcal{P}_2(\Omega) \cap L^{\infty}(\Omega)$ where $\Omega$ is $\Rd$, a bounded smooth domain or $R\,\Td$. Let $\mu_t$, $\nu_t$ be solutions to \eqref{eq:PME_1}--\eqref{eq:PME_2} with Neumann boundary conditions \eqref{eq:Neuman_boundary_conditions_PME} (if $\Omega$ is a bounded domain) and the same initial condition $\mu_0 = \nu_0$. Suppose that 
$$
1<m_- \leq m \leq m_+ < \infty,  \qquad 1<n_- \leq n \leq n_+ < \infty.
$$
Then, there exists a constant $C = C(m_-, m_+, n_-, n_+, \|\mu_0\|_{L^{\infty}(\Omega)})$ such that
\begin{equation}\label{eq:continuity_wrt_exponent_W2_PME_main_thm}
\mathcal{W}_2(\mu_t, \nu_t) \leq C\, \sqrt{t}\, |m-n|.
\end{equation}
\end{thm}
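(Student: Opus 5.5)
We plan to deduce Theorem~\ref{thm:PME} from Theorem~\ref{thm:main} with $p=2$, comparing the two equations through the same solution $\mu_s$. Write the equations as continuity equations with velocities $\bvmu[\rho] = -\frac{m}{m-1}\nabla \rho^{m-1}$ and $\bvnu[\rho] = -\frac{n}{n-1}\nabla \rho^{n-1}$. Equation \eqref{eq:PME_2} is the 2-Wasserstein gradient flow of $\mathcal{G}[\rho]=\frac{1}{n-1}\int_\Omega \rho^n$. Since $n>1$, this functional is displacement convex ($\lambda=0$) by McCann's condition on $\Rd$, on convex bounded domains and on $R\,\Td$. Hence the semigroup is a $\mathcal{W}_2$-contraction and $K(t)\equiv 1$ in \eqref{eq:Wasserstein_Lipschitz-AC}.

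Theorem~\ref{thm:main} then gives
$$
\mathcal{W}_2(\mu_t,\nu_t)\le \int_0^t \Big(\int_\Omega \Big|\tfrac{n}{n-1}\nabla\mu_s^{n-1}-\tfrac{m}{m-1}\nabla \mu_s^{m-1}\Big|^2 \diff\mu_s\Big)^{1/2}\diff s .
$$
Expanding $\frac{k}{k-1}\nabla \rho^{k-1} = k\,\rho^{k-2}\nabla\rho$, the integrand becomes $\big(\int |n\mu^{n-1}-m\mu^{m-1}|^2 \mu^{-1}|\nabla\mu|^2\diff x\big)^{1/2}$. By the mean value theorem in the exponent, using $\frac{d}{dk}(k\mu^{k-1}) = \mu^{k-1}(1+k\log\mu)$,
$$
|n\mu^{n-1}-m\mu^{m-1}|\le |m-n|\sup_{k\in[n,m]}\mu^{k-1}(1+k|\log\mu|).
$$
By the maximum principle, $\|\mu_s\|_{L^\infty}\le\|\mu_0\|_{L^\infty}=:M$. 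For small $\mu$ the logarithm is absorbed by a power: $\mu^{k-1}|\log\mu|\le C_\eta\,\mu^{k-1-\eta}$.

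It therefore suffices to bound
$$
\int_0^t\Big(\int \mu_s^{2(k_--1)-2\eta-1}|\nabla\mu_s|^2\Big)^{1/2}\diff s,\qquad k_-=\min(m_-,n_-).
$$
Apply Cauchy--Schwarz in time to get $\sqrt t\,\big(\int_0^t\int \mu^{2k_- -3-2\eta}|\nabla \mu|^2\big)^{1/2}$. The remaining quantity is a gradient of a power, $\int_0^t\int|\nabla \mu^{q}|^2$ up to constants. We plan to control it with an energy/entropy estimate for \eqref{eq:PME_1}. Testing \eqref{eq:PME_1} with $\mu^{r}$ gives
$$
\frac{d}{dt}\int\mu^{r+1}+c\int \mu^{m+r-2}|\nabla\mu|^2 = 0,
$$
so $\int_0^t\int\mu^{m+r-2}|\nabla\mu|^2\le C\int\mu_0^{r+1}\le C M^{r}$, since $\mu_0$ is a probability density. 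The exponents must match, $m+r-2 = 2k_--3-2\eta$, and we need $r>-1$ (with the log-entropy at $r=0$, i.e.\ testing with $\log\mu$, as a borderline option). This requires roughly $2k_- -1-2\eta-m>-1$. If instead the exponent is larger, we use $\mu\le M$ to lower it, since higher powers of $\mu$ are bounded by $M^{\cdot}$ times lower ones.

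This exponent bookkeeping is the main obstacle. When $m$ is much larger than $n_-$, a suitable $r>-1$ may not exist, since we can only decrease powers using the $L^\infty$ bound, not increase them. We would handle it by splitting $\int\mu|\,\cdot\,|^2$ into regions $\{\mu\ge\delta\}$ and $\{\mu<\delta\}$ if needed. Alternatively, we would exploit the symmetric version, swapping the roles so that the gradient-flow structure is used for the equation whose dissipation controls the needed power. The constants depend only on $m_\pm,n_\pm,M$, and the uniform time integral gives the factor $\sqrt t$.

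Finally, Assumptions~\ref{ass:BIG_regularity_of_sln_and_vf}--\ref{ass:semigroup_second_map} must be verified. They require smooth positive solutions, which are obtained for strictly positive smooth data on bounded or periodic domains (nondegenerate parabolic theory). For general $\mu_0\in L^\infty$, and for $\Rd$, we would pass to the limit by approximating the data, e.g.\ $\mu_0+\epsilon$ normalized, or by truncated domains. The $L^1$-contraction and $\mathcal{W}_2$ lower semicontinuity pass the estimate to the limit, because $C$ is independent of the regularization.
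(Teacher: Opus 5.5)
\textbf{Genuine gap.} Your overall route is the paper's second proof: Theorem~\ref{thm:main} with $K\equiv 1$ from displacement convexity, which reduces everything to bounding $\int_0^t\big(\int|n\mu_s^{n-1}-m\mu_s^{m-1}|^2\mu_s^{-1}|\nabla\mu_s|^2\big)^{1/2}\diff s$. The step you call the main obstacle, however, does not close as written. Neither the reduction to $k_-=\min(m_-,n_-)$ nor the level-set splitting can fix it.

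First, the dissipation identity bounds $\int_0^t\int\mu^{m+r-2}|\nabla\mu|^2$ by $\int\mu_0^{r+1}/(m\,r(r+1))$ only when $r>0$. For $-1<r<0$ it reads $\frac{\diff}{\diff t}\int\mu^{r+1}=m|r|(r+1)\int\mu^{m+r-2}|\nabla\mu|^2\ge 0$, so it gives no bound in terms of $\mu_0$ alone. Lowering every power to $2k_--3-2\eta$ therefore forces $m<2k_--1-2\eta$. This fails, e.g., for $m_-=n_-=\tfrac32$, $m=3$, even when $m\le n$.

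Second, and more seriously, suppose the velocity difference is evaluated on the solution with the larger exponent, say $m>n$. Near $\{\mu=0\}$ the integrand is $\approx n^2\mu^{2n-3}|\nabla\mu|^2$, and this need not be integrable at all. For a Barenblatt profile of \eqref{eq:PME_1}, $\mu\sim d^{1/(m-1)}$ at distance $d$ from the free boundary. Hence $\mu^{2n-3}|\nabla\mu|^2\sim d^{\frac{2n-1}{m-1}-2}$, which is non-integrable once $m\ge 2n$. In that case the right-hand side of Theorem~\ref{thm:main} is $+\infty$, and no splitting into $\{\mu<\delta\}$ and $\{\mu\ge\delta\}$ can rescue it.

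\textbf{The missing step.} The step you list only as an alternative is in fact the argument. Both $\frac{1}{m-1}\int\rho^m$ and $\frac{1}{n-1}\int\rho^n$ are displacement convex, and the claimed bound is symmetric in $(m,n)$. So you may assume $m<n$, as the paper does. Let the equation with the larger exponent supply the contractive semigroup, and evaluate the velocity difference on the solution $\mu_s$ of the equation with the smaller exponent.

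Then lower powers only down to that solution's own exponent:
\begin{itemize}
\item for $\mu\le1$, $\sup_{k\in[m,n]}\mu^{k-1}(1+k|\log\mu|)\le C_\eta\,\mu^{m-1-\eta}$;
\item for $1\le\mu\le M$, the left side is bounded by a constant depending on $n_+$ and $M$.
\end{itemize}
The integrand is therefore at most $C|m-n|^2\mu^{2m-3-2\eta}|\nabla\mu|^2$. This is exactly the dissipation in the identity with $r=m-1-2\eta>0$ (take e.g.\ $\eta=(m_--1)/4$). It gives $\int_0^t\int\mu^{2m-3-2\eta}|\nabla\mu|^2\le M^{m-1-2\eta}/\big(m(m-1-2\eta)(m-2\eta)\big)$, and the claim follows.

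The paper organizes the algebra differently. It writes $m-n\mu^{n-m}=(m-n)\mu^{n-m}+m(1-\mu^{n-m})$ and uses the exponents $k=2n-m-1$, $m-1-\alpha$, $2n-m-1-\beta$. All of these are positive precisely because $m<n$.

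\textbf{Whole space.} On $\R^d$, your ``truncated domains'' needs finite speed of propagation. Compactly supported data stay in a ball, so the Neumann solution on a larger ball is the whole-space solution; general data are then handled by truncation. Your passage to the limit via $L^1$-contraction and narrow lower semicontinuity of $\mathcal{W}_2$ is a valid substitute for the paper's use of $\mathcal{W}_2$-contraction.
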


The constant $C$ can be expressed explicitly. Assuming $m<n$, we introduce parameters $\alpha \in [0,m-1)$ and $\beta \in [0, 2n-m-1)$. Then, $C$ is given by
\begin{equation}\label{eq:constant_continuity_wrt_exponent_W2_PME_main_thm}
\begin{split}
C = \, & \frac{1}{\sqrt{m\,(2n-m-1)\,(2n-m)}} \left( \int_{\Omega} \mu_0^{2n-m} \diff x \right)^{\frac{1}{2}}\\
 &+\frac{m}{\sqrt{m\,(m-1-\alpha)\,(m-\alpha)}} \, C_{\frac{\alpha}{2}}\, \left(\int_{\Omega} \mu_0^{m-\alpha} \diff x \right)^{\frac{1}{2}}\\
 &+\frac{m}{\sqrt{m\,(2n-m-1-\beta)\,(2n-m-\beta)}} \,  C_{\frac{\beta}{2}}\,  \left(\int_{\Omega} \mu_0^{2n-m-\beta} \diff x \right)^{\frac{1}{2}},
\end{split}
\end{equation}
where 
\begin{equation}\label{eq:constant_C_kappa_continuity_PME}
C_{\kappa} = \min\left(\frac{1}{e\,\kappa} + \|\mu_0\|_{L^{\infty}(\Omega)}^{\kappa+1}, \,  \|\mu_0\|_{L^{\infty}(\Omega)}^{\kappa}\,\|\log \mu_0\|_{L^{\infty}(\Omega)} \right), \qquad \kappa = \frac{\alpha}{2},\,  \frac{\beta}{2}.
\end{equation}
Observe that the constant $C_{\frac{\alpha}{2}}$, $C_{\frac{\beta}{2}}$ in \eqref{eq:constant_C_kappa_continuity_PME} converge to $\|\log\mu_0\|_{L^{\infty}(\Omega)}$ when $\alpha, \beta \to 0$, i.e. when $m \to 1$. Therefore, when $m\to 1$, the constant blows up unless we work on a bounded domain and $\mu_0$ is bounded from below. Hence, the presented method works only for $1<m,n$; we refer to Remark~\ref{rem:extending_below_1_the_method_for_PME} for a discussion of the main difficulties in extending the range of exponents below 1. \\

\noindent We provide two proofs of Theorem \ref{thm:PME} in Section \ref{sect:applications_to_PME}, based on either Theorem \ref{thm:simplified_for_2-Wasserstein} or \ref{thm:main}. \\

\noindent One may ask whether the precise formula \eqref{eq:constant_continuity_wrt_exponent_W2_PME_main_thm} can be used to deduce a rate of convergence of the porous medium equation to the heat equation. If $\Omega$ is a bounded domain and $\mu_0 \geq \varepsilon > 0$, consider the solutions $\mu_{m_k,t}$, $\nu_{n_k,t}$ to \eqref{eq:PME_1}--\eqref{eq:PME_2} with exponents $m_k = 1+\frac{n-1}{2^{k+1}}$, $n_k = 1 +\frac{n-1}{2^{k}}$ where $k \in \mathbb{N}$ and $n_0 = n$ is fixed. Then, Theorem~\ref{thm:PME} yields the estimate $\mathcal{W}_2(\nu_{n,t}, \mu_{1,t}) \leq \sum_{k=0}^{\infty} \mathcal{W}_2(\nu_{n_k,t}, \mu_{n_k,t}) \leq C\, \sqrt{n-1}$ for a constant $C$ depending on $t$, $\|\mu_0\|_{L^{\infty}(\Omega)}$, and linearly on $\|\log\mu_0\|_{L^{\infty}(\Omega)}$. Next, by considering a sequence of initial conditions $\mu_0^{\eps} = (1-\eps) \mu_0 + \frac{\eps}{|\Omega|}$ and choosing $\eps \approx |n-1|$, one can obtain $\mathcal{W}_2(\nu_{n,t}, \mu_{1,t}) \leq C\, \sqrt{n-1}\, |\log(n-1)|$ for a general initial condition $\mu_0$, not necessarily strictly positive. This argument, however, is somewhat technical. In Appendix~\ref{app:rate_PME_nto1} we present an alternative approach based on the Evolutionary Variational Inequalities, which yields the rate $\sqrt{n-1}$ in a straightforward manner. On the other hand, it is easy to see that if $\mu, \nu$ solve \eqref{eq:PME_1}--\eqref{eq:PME_2}, the method in Appendix \ref{app:rate_PME_nto1} yields $\mathcal{W}_2(\mu_t, \nu_t)\leq C\sqrt{|m-n|}$ which is weaker than the bound provided by Theorem \ref{thm:PME}, thereby illustrating the applicability of Theorems~\ref{thm:simplified_for_2-Wasserstein} and \ref{thm:main}. \\

\noindent Theorems \ref{thm:simplified_for_2-Wasserstein} and \ref{thm:main} can be also used to prove rates of convergence for singular limits. To illustrate, we discuss here the limit $m \to \infty$ in \eqref{eq:PME_1}, commonly called {\it the mesa problem} \cite{MR2028110, MR1155452, MR916741, MR983523} or {\it the incompressible limit} \cite{MR4324293,MR3162474,MR3729490}. Several authors studied the rate of convergence as $m \to \infty$. The first rate in the Wasserstein distance of $\frac{1}{m^{1/24}}$ was obtained in \cite{MR3190322} which was improved recently to $\frac{1}{\sqrt{m}}$ in \cite{david2024improved}. The latter work exploits the formula for the time derivative of $\mathcal{W}_2$ in terms of Kantorovich potentials which are further estimated by using the Monge-Ampère equation. Here, we obtain the same result by directly applying Theorems \ref{thm:simplified_for_2-Wasserstein} or \ref{thm:main}. We also point out that the rate of $\frac{1}{\sqrt{m}}$ was obtained also in \cite{MR4604155} in the $H^{-1}$ distance. In this work, the Authors could include also the source terms since the exploited metric does not require conservation of mass. Finally, we point out that the results of \cite{david2024improved} are valid also for a general aggregation-diffusion equation. 

\begin{thm}\label{thm:PME_incompress_limit}
Let $\Omega$ be $\Rd$, a bounded smooth domain or $R\,\Td$. Let $\mu_{m,t}$ be the solution to \eqref{eq:PME_1} with Neumann boundary conditions (if $\Omega$ is a bounded domain) and the initial condition $\mu_{m,0} \in \mathcal{P}_2(\Omega) \cap L^{\infty}(\Omega)$ satisfying
\begin{equation}\label{ass:bound_integral_incompressible_limit}
\mathcal{C} := \limsup_{m \to \infty} \int_{\Omega} \mu_{m,0}^{2m} \diff x  < \infty, \qquad \lim_{m\to \infty} \mathcal{W}_2(\mu_{m,0}, \mu_{\infty,0}) = 0,
\end{equation}
for some $\mu_{\infty,0} \in \mathcal{P}_2(\Omega) \cap L^{\infty}(\Omega)$. Then, $\{\mu_{m,t}\}_m$ converges in $\mathcal{W}_2$ when $m\to \infty$ uniformly on compact intervals of time to the limit $\mu_{\infty,t}$. Moreover, for $m \geq 2$ we have 
$$
\mathcal{W}_2(\mu_{m,t}, \mu_{\infty,t}) \leq \mathcal{W}_2(\mu_{m,0}, \mu_{\infty,0}) + \frac{\sqrt{t}}{\sqrt{m-1}} + \frac{\sqrt{2 \, t}}{\sqrt{m}} \, \mathcal{C}^{1/2}.
$$
\end{thm}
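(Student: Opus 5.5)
The plan is to obtain the rate through a Cauchy-type estimate between two porous medium flows with large exponents, and then pass to the limit. Fix $m\ge 2$ and $n\in(m,2m]$. I view \eqref{eq:PME_2} as the $2$-Wasserstein gradient flow of $\mathcal{G}_n[\nu]=\frac{1}{n-1}\int_\Omega \nu^n\diff x$. This functional is $0$-geodesically convex by McCann's condition. I apply Theorem \ref{thm:simplified_for_2-Wasserstein} with $\lambda=0$, with $\nu=\mu_{n,t}$ and with $\mu=\mu_{m,t}$, where $\bvmu[\mu]=-\frac{m}{m-1}\nabla\mu^{m-1}$. The initial data are different, so I integrate the differential inequality in time instead of starting from zero. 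Using Cauchy--Schwarz in time, this gives
$$
\mathcal{W}_2(\mu_{m,t},\mu_{n,t})\le \mathcal{W}_2(\mu_{m,0},\mu_{n,0})+\sqrt{t}\Big(\int_0^t\!\!\int_\Omega \Big|\tfrac{n}{n-1}\nabla\mu_{m}^{n-1}-\tfrac{m}{m-1}\nabla \mu_m^{m-1}\Big|^2\diff\mu_{m,s}\diff s\Big)^{1/2}.
$$
Assumption \ref{eq:assumption_simplified_formula} is checked as in the first proof of Theorem \ref{thm:PME}. I take that verification for granted, working with bounded initial data and approximating if needed.

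Next I split the integrand with $|a-b|^2\le 2|a|^2+2|b|^2$ and bound each piece using entropy dissipation identities for \eqref{eq:PME_1}. For any exponent $k>1$,
$$
\frac{\diff}{\diff t}\int_\Omega\mu_m^k\diff x=-k(k-1)m\int_\Omega \mu_m^{k+m-3}|\nabla\mu_m|^2\diff x .
$$
With $k=m$, the $m$-term is the standard energy dissipation, and it is bounded by $\frac{1}{m-1}\int\mu_{m,0}^m\diff x\le \frac{1}{m-1}$ because $\int \mu_{m,0}^m\le 1$ along our data. This piece should produce the contribution $\sqrt{t}/\sqrt{m-1}$. For the $n$-term I have $\int\mu_m|\nabla \frac{n}{n-1}\mu_m^{n-1}|^2=n^2\int\mu_m^{2n-3}|\nabla\mu_m|^2$, so I choose $k=2n-m\in(m,3m]$. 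This gives
$$
\int_0^t\!\!\int_\Omega \mu_m|\nabla \tfrac{n}{n-1}\mu_m^{n-1}|^2\le \frac{n^2}{(2n-m)(2n-m-1)m}\int_\Omega\mu_{m,0}^{2n-m}\diff x \lesssim \frac{1}{m}\int_\Omega\mu_{m,0}^{2n-m}\diff x .
$$
The next task is to control $\int\mu_{m,0}^{2n-m}$ by $\int\mu_{m,0}^{2m}$. When $n\le \tfrac32 m$ this follows from the interpolation $\rho^{k}\le \rho+\rho^{2m}$ for $1\le k\le 2m$ together with mass one. When $n\in(\tfrac32m,2m]$, I would instead pass through the intermediate exponent. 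After tracking the constants, the result should be $\mathcal{W}_2(\mu_{m,t},\mu_{n,t})\le \mathcal{W}_2(\mu_{m,0},\mu_{n,0})+ C\sqrt{t}\,(m^{-1/2}+m^{-1/2}\mathcal{C}^{1/2})$.

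To pass to the limit I chain dyadically: $m_j=2^j m$. The bounds above decay like $m_j^{-1/2}=2^{-j/2}m^{-1/2}$, so the series over $j$ converges. This shows that $\{\mu_{m,t}\}$ is Cauchy in $\mathcal{W}_2$, uniformly for $t$ in compact intervals, with a limit $\mu_{\infty,t}$. It also yields the bound of the theorem after letting $j\to\infty$, using lower semicontinuity and $\mathcal{W}_2(\mu_{m_j,0},\mu_{\infty,0})\to 0$ from \eqref{ass:bound_integral_incompressible_limit}.

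The main obstacle is obtaining the exact constants $\sqrt{t}/\sqrt{m-1}$ and $\sqrt{2t}\,\mathcal{C}^{1/2}/\sqrt m$ rather than a multiple of them. Dyadic summation loses a factor $\sum_j 2^{-j/2}$. To avoid this, I would first try to compare $\mu_m$ directly with the limit $\mu_\infty$. The idea is to apply Theorem \ref{thm:simplified_for_2-Wasserstein} to a single comparison $n\to\infty$, tracking that the $n$-term tends to $\frac{1}{m}\cdot 2\int\mu_0^{2m}$-type quantities, and to use $\mathcal{C}$ as the limsup. The error would then come only from the two pieces of the splitting, which accounts for the factor $\sqrt2$.
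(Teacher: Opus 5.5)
\textbf{There is a genuine gap.} Integrating the inequality of Theorem~\ref{thm:simplified_for_2-Wasserstein} from two different initial data is legitimate. The trouble is that you then run the energy estimates on the exponent-$m$ flow started from its own datum $\mu_{m,0}$, so the $n$-term costs $\int_\Omega\mu_{m,0}^{2n-m}\diff x$.

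\begin{itemize}
\item As soon as $n>\tfrac32 m$ the exponent $2n-m$ exceeds $2m$. Such integrals are not controlled by the mass and $\int_\Omega\mu_{m,0}^{2m}\diff x$: a spike of height $h$ on a set of measure $h^{-2m}$ has $2m$-integral $1$ but $3m$-integral $h^{m}$. No ``intermediate exponent'' repairs this. For the same reason, your proposed single comparison with $n\to\infty$ is unavailable.
\item The dyadic chain has an independent defect. Each link carries $\mathcal{W}_2(\mu_{m_j,0},\mu_{m_{j+1},0})$, while \eqref{ass:bound_integral_incompressible_limit} only gives $\mathcal{W}_2(\mu_{m,0},\mu_{\infty,0})\to0$ with no rate. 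The sum of these terms along the chain therefore need not be small or even finite (take data translated alternately by $\pm 1/\log m$). It is certainly not bounded by $\mathcal{W}_2(\mu_{m,0},\mu_{\infty,0})$. So neither the Cauchy property nor the stated estimate follows.
\item Two smaller errors. First, $\int_\Omega\mu_{m,0}^m\diff x\le 1$ is not implied by the hypotheses: for $\mu_{m,0}=a_m\mathds{1}_{[0,1/a_m]}$ with $a_m^{2m-1}=c>1$ it tends to $\sqrt c$. Second, $|a-b|^2\le 2|a|^2+2|b|^2$ loses a factor $\sqrt2$ that Minkowski's inequality in $L^2(\mu_s)$ avoids.
\end{itemize}

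\textbf{The missing idea} is to let the exponent-$m$ flow start from the datum of the larger exponent. Let $\widetilde\mu_{m,n,t}$ solve \eqref{eq:PME_1} with initial datum $\mu_{n,0}$. Since $\widetilde\mu_{m,n}$ and $\mu_n$ share their datum, you can combine \eqref{eq:stability_PME_result_step1}, Minkowski, and \eqref{eq:energy_estimate_general_k} applied to $\widetilde\mu_{m,n}$ with $k=m-1$ and $k=2n-m-1$. This bounds $\mathcal{W}_2(\widetilde\mu_{m,n,t},\mu_{n,t})$ by $\frac{\sqrt t}{\sqrt{m-1}}\big(\int_\Omega\mu_{n,0}^{m}\diff x\big)^{1/2}+\frac{\sqrt t\,n}{\sqrt{m(2n-m-1)(2n-m)}}\big(\int_\Omega\mu_{n,0}^{2n-m}\diff x\big)^{1/2}$. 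Both exponents now lie in $[1,2n]$, so interpolation with mass one gives $\int_\Omega\mu_{n,0}^{p}\diff x\le\big(\int_\Omega\mu_{n,0}^{2n}\diff x\big)^{\frac{p-1}{2n-1}}$.

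Passing from $\widetilde\mu_{m,n,t}$ to $\mu_{m,t}$ costs a single contraction step, $\mathcal{W}_2(\widetilde\mu_{m,n,t},\mu_{m,t})\le\mathcal{W}_2(\mu_{n,0},\mu_{m,0})$, by $0$-geodesic convexity of the exponent-$m$ energy. The resulting bound holds for every $n>m$, which gives the Cauchy property directly. It also lets you send $n\to\infty$ at fixed $m$:
\begin{itemize}
\item the interpolation exponents $\frac{m-1}{2n-1}\to0$ and $\frac{2n-m-1}{2n-1}\to1$ produce the factors $1$ and $\mathcal{C}$;
\item $(2n-m-1)(2n-m)>n^2/2$ produces $\sqrt{2t}/\sqrt m$;
\item the initial-data term tends to $\mathcal{W}_2(\mu_{m,0},\mu_{\infty,0})$.
\end{itemize}
This gives exactly the stated estimate, with no chaining at all.
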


The first condition in \eqref{ass:bound_integral_incompressible_limit} is standard in the context of incompressible limit. A~typical condition requires that $\mu_{m,0}$ is compactly supported and $\mu_{m,0} \leq C^{1/m}$ for some constant $C>0$ \cite{MR4604155}. For such initial data, the first condition in \eqref{ass:bound_integral_incompressible_limit} is satisfied.\\

The classical results about the mesa problem assert that if $\mu_0$ is fixed and independent of $m$, two situations may occur: if $\mu_0(x) \leq 1$ for all $x$ then $\mu_{m,t}(x) \to \mu_0(x)$ for all $x$ while if $\mu_0(x) >1$ for some $x$, $\mu_{m,t}$ collapses and forms a plateau at height 1 \cite{MR3391972}. Our assumption \eqref{ass:bound_integral_incompressible_limit} leads to the first scenario and we can prove that $\mu_{\infty,t} = \mu_{\infty,0}$ for all $t\geq0$. Indeed, letting $p_{m,t} = \mu_{m,t}^{m-1}$, writing the PDE as $\partial_t \mu_{m,t} = \frac{m}{m-1} \DIV(\mu_{m,t}\, \nabla p_{m,t})$ and multiplying it by $\mu_{m,t}^{m-2}$ we obtain
$$
\int_{\Omega} p_{m,t}(x) \diff x + \frac{m(m-2)}{m-1} \int_0^t \int_{\Omega} |\nabla p_{m,s}|^2 \diff x \diff s \leq \int_{\Omega} p_{m,0}(x) \diff x.
$$
By assumption \eqref{ass:bound_integral_incompressible_limit} and by interpolating the $L^{m-1}(\Omega)$ norm between $L^1(\Omega)$ and $L^{2m}(\Omega)$, $\int_{\Omega} p_{m,0}(x) \diff x$ is uniformly bounded in $m$ so that $\nabla p_{m,t} \to 0$ strongly in $L^2((0,\infty)\times \Omega)$ as $m\to\infty$. Then, by the Benamou-Brenier formula in Lemma \ref{lem:B-B} and the maximum principle for solutions to \eqref{eq:PME_1} we have for $s<t$
$$
\mathcal{W}^2_2(\mu_{m,t}, \mu_{m,s}) \leq |t-s|\, \int_s^t \int_{\Omega} |\nabla p_{m,u}|^2 \, \mu_{m,u} \diff x \diff u \leq |t-s|\, \|\nabla p_{m,u}\|^2_{L^2((0,\infty)\times\Omega)}\, \|\mu_{m,0}\|_{L^{\infty}(\Omega)}. 
$$
By \eqref{ass:bound_integral_incompressible_limit}, $\|\mu_{m,0}\|_{L^{\infty}(\Omega)}\to 1$ so that $\mathcal{W}^2_2(\mu_{\infty,t}, \mu_{\infty,s}) = 0 $ and $\mu_{\infty,t} = \mu_{\infty,0}$.

\subsection{Nonlocal-to-local limit}\label{subsect:intro_nonlocal2local_limit}

Here, we present how to apply Theorem \ref{thm:simplified_for_2-Wasserstein} to obtain a rate of convergence for the one dimensional problem
\begin{equation}\label{eq:PDE_nonlocal_1D_eps}
\partial_t \mu^{\eps} = \partial_x(\mu^{\eps}\, \partial_x \mu^{\eps} \ast\omega_{\eps}).
\end{equation}
Here, $\mu^{\eps}:[0,T]\times\R \to \R^+$, $\mu^{\eps} \ast\omega_{\eps} = \int_{\R} \mu^{\eps}(y) \, \omega_{\eps}(x-y) \diff y$ is the convolution operator with the kernel $\omega_{\eps}(x) = \frac{1}{\eps} \omega\left(\frac{x}{\eps}\right)$ where $\omega$ satisfies Assumption \ref{ass:kernel} below. One popular choice is $\omega$ solving the PDE
\begin{equation}\label{eq:elliptic_PDE_omega}
-\p^2_{x} \omega + \omega = \delta_{0},
\end{equation}
i.e. $\omega(x)=\frac{1}{2} e^{-|x|}$. This particular kernel was studied in several works \cite{MR4880211,MR4712820,MR4146915,PME_1d_Brinkman_Darcy_rate} due to its connection to tissue growth models as well as due to the fact that it resembles the difficulties of the Newtonian potential without integrability issues. Formally, passing to the limit $\eps \to 0$ in \eqref{eq:PDE_nonlocal_1D_eps} one obtains
\begin{equation}\label{eq:PDE_local_1D_eps}
\partial_t \mu = \partial_x(\mu \, \partial_x \mu).
\end{equation}
The rigorous passage to the limit of \eqref{eq:PDE_nonlocal_1D_eps} towards \eqref{eq:PDE_local_1D_eps} has been studied in \cite{MR1821479,Ol,carrillo2023nonlocal,MR3913840,MR4858611,carrillo2024nonlocal,MR4880211,MR4712820,david2025Brinkman2Darcy,di2024approximation} also in more complicated contexts of systems and nonlinear equations with more general pressure laws. The problem of quantifying the rate of convergence was studied for the first time in \cite{amassad2025deterministic} where the rate of $\sqrt{\eps}$ in the Wasserstein distance was obtained for any kernel satisfying \ref{ass:kernel_convexity} in Assumption \ref{ass:kernel}. The proof is based on exploiting the formula for $\frac{\diff}{\diff t} \mathcal{W}_2^2(\mu_{\eps},\mu)$ as in \cite[Theorem 5.24]{MR3409718}. Then, in \cite{PME_1d_Brinkman_Darcy_rate} a~simpler proof was given for the particular choice of kernel satisfying \eqref{eq:PDE_nonlocal_1D_eps}. Here, the argument exploits the Evolutionary Variational Inequality. Nevertheless, the numerical simulations presented in \cite{PME_1d_Brinkman_Darcy_rate} suggested that the rate could be improved to $\eps$ and the result presented in this section confirms these expectations.\\
 
We first present assumptions for the kernel $\omega$. To motivate, it is easy to see that they are satisfied for $\omega(x) = \frac{1}{2}e^{-|x|}$ satisfying \eqref{eq:elliptic_PDE_omega} and $\omega(x) = (1-|x|)\,\mathds{1}_{|x|\leq 1}$. 
\begin{Ass}\label{ass:kernel}
We assume that $\omega$ satisfies:
\begin{enumerate}[label=($\text{A}_{\arabic*}^{\text{ker}}$)]
\item\label{ass:kernel_mass} $\omega \geq 0$, $\omega(x) = \omega(-x)$, $\int_{\R} \omega(y)\diff y = 1$ and $\int_{\R} \omega(y)\,|y|^2 \diff y < \infty$,
\item\label{ass:kernel_regularity} $\omega, \partial_x \omega \in L^{\infty}(\R)$ and $\partial^2_x \omega \in \mathcal{M}(\R)$,
\item\label{ass:kernel_convexity} $\omega$ is convex on the set $\{x\in \R:x\geq0\}$.
\end{enumerate}
\end{Ass}

We recall that $\mathcal{M}(\R)$ denotes the space of bounded Radon measures with the total variation norm, coinciding with the $L^1(\R)$ norm if the measure has a density with respect to the Lebesgue measure. Concerning conditions above, \ref{ass:kernel_convexity} is necessary for the geodesic convexity of the corresponding interaction energy $\mathcal{G}[\mu] = \frac{1}{2} \int_{\R} \mu\, \mu \ast \omega_{\eps} \diff x$. We also point out that assumption \ref{ass:kernel_convexity} implies some irregularity of $\omega$ at $x=0$ so our arguments do not work for smooth kernels $\omega$. 

\begin{thm}\label{thm:rate_of_conv_nonlocal_to_local}
Let $\mu_0 \in \mathcal{P}_2(\R) \cap L^{\infty}(\R)$, $\partial_x \mu_0 \in L^{\infty}(\R)$, $|\partial_x^2 \mu_0|^- = - \partial^2_x \mu_0 \, \mathds{1}_{\partial^2_x \mu_0 <0} \in \mathcal{M}(\R)$.  Let $\mu^{\eps}$ be the solution of \eqref{eq:PDE_nonlocal_1D_eps} with $\omega_{\eps} = \frac{1}{\eps}\omega\left(\frac{x}{\eps}\right)$ where $\omega$ satisfies Assumption \ref{ass:kernel} and with initial condition $\mu_0$. Let $\mu$ be the solution of \eqref{eq:PDE_local_1D_eps} with initial condition $\mu_0$. Then,
\begin{equation}\label{eq:main_thm_rate_nonlocal_to_local}
\mathcal{W}^2_2(\mu^{\eps}_{t}, \mu_t) \leq 2\,t \, \eps^2\,  \Big(5 t\,\| \partial_x \mu_0\|^2_{L^{\infty}(\R)}\,  + 
\frac{1}{2}\, \|\mu_0\|_{L^{\infty}(\R)} \Big) \, \| |\partial_{x}^2 \mu_0|^{-} \|_{\mathcal{M}(\R)}\, \| \omega\,|y|^2 \|_{L^1(\R)}.
\end{equation}
\end{thm}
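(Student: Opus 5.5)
We will apply Theorem \ref{thm:simplified_for_2-Wasserstein}, taking $\nu = \mu^{\eps}$, the solution of the nonlocal equation \eqref{eq:PDE_nonlocal_1D_eps}, which is a Wasserstein gradient flow of the interaction energy $\mathcal{G}[\nu] = \frac12\int_{\R}\nu\,\nu\ast\omega_{\eps}\diff x$. The other solution is $\mu$, the solution of \eqref{eq:PDE_local_1D_eps}, with velocity $\bvmu[\mu] = -\partial_x\mu$. Then $\nabla\frac{\delta\mathcal{G}}{\delta\nu}[\mu] = \partial_x\mu\ast\omega_{\eps}$, and the right-hand side of \eqref{eq:claim_formula_for_Wass_gradient_flows} becomes
$$\Big(\int_{\R}|\partial_x\mu_t\ast\omega_{\eps}-\partial_x\mu_t|^2\,\mu_t\diff x\Big)^{1/2}.$$

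By \ref{ass:kernel_convexity}, $\omega_\eps$ is convex on $[0,\infty)$ and even. In one dimension this gives geodesic convexity of $\mathcal{G}$ with $\lambda=0$: a monotone transport map preserves the order of pairs, so $|x-y|$ interpolates linearly along the geodesic. The slope conditions \ref{eq:derivative_via_1st_var_sect:alternative_proof} should follow from the standard differentiation of the interaction energy along geodesics, since $\partial_x\omega$ is bounded. The regularity in \ref{ass:regularity_solutions:add_sect_alternative_proof} comes from $L^\infty$ bounds on $\mu^\eps$ and $\mu$ together with the energy dissipation. Integrating in time from $\mathcal{W}_2(\mu_0,\mu_0)=0$ gives
$$\mathcal{W}_2(\mu^\eps_t,\mu_t)\le\int_0^t\|\partial_x\mu_s\ast\omega_\eps-\partial_x\mu_s\|_{L^2(\mu_s)}\diff s.$$

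To estimate the integrand, write
$$\partial_x\mu\ast\omega_\eps(x)-\partial_x\mu(x)=\int\omega_\eps(y)\big(\partial_x\mu(x-y)-\partial_x\mu(x)\big)\diff y.$$
By symmetry of $\omega$ this equals
$$\int\omega_\eps(y)\int_0^1\!\!\int_{-\tau}^{\tau}\ldots$$
a second-order difference, bounded by $\eps^2\|\omega|y|^2\|_{L^1}$ times a local average of $\partial_x^2\mu$. Taking the estimate squared against $\mu_t$, I would bound $|f|^2\le\|f\|_{L^\infty}\,|f|$. The $L^\infty$ factor is controlled by $2\|\partial_x\mu_t\|_{L^\infty}\le2\|\partial_x\mu_0\|_{L^\infty}$, using Lipschitz propagation for the 1D porous medium equation. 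The factor $\int|f|\mu$ is controlled by $\|\mu_t\|_{L^\infty}\,\eps^2\|\omega|y|^2\|_{L^1}\,\|\partial_x^2\mu_t\|_{\mathcal{M}}$.

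The stated bound has the form $\mathcal{W}_2^2\lesssim t\,\eps^2(\ldots)$, so I expect instead to use the squared form, or to integrate $\frac{\diff}{\diff t}\mathcal W_2$ and apply Cauchy–Schwarz in time. The ingredient needed is an estimate on $\partial_x^2\mu_t$ in measures. Here the one-sided Aronson–Bénilan-type control propagates $|\partial_x^2\mu|^-$. Since $\int\partial_x^2\mu=0$, we have $\|\partial_x^2\mu_t\|_{\mathcal M}=2\||\partial_x^2\mu_t|^-\|_{\mathcal M}$, and this negative part should be nonincreasing, or at least bounded, in terms of the initial data. The term $5t\|\partial_x\mu_0\|^2$ in \eqref{eq:main_thm_rate_nonlocal_to_local} suggests that the propagation of $|\partial_x^2\mu|^-$ picks up a contribution growing like $t\|\partial_x\mu\|^2$. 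This is presumably obtained by differentiating the equation twice and testing with the negative part.

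I expect the main obstacle to be this propagation step for $\||\partial_x^2\mu_t|^-\|_{\mathcal M}$: a rigorous Kato-type inequality for the twice-differentiated degenerate PDE, done via approximation by positive smooth solutions. Verifying \ref{eq:derivative_via_1st_var_sect:alternative_proof} for a kernel that is only $W^{1,\infty}$ is a secondary technical point.
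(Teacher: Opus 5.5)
Your first step coincides with the paper's. It applies Theorem~\ref{thm:simplified_for_2-Wasserstein} with $\lambda=0$ and $\nu=\mu^{\eps}$, then Cauchy--Schwarz in time, giving $\mathcal{W}_2^2(\mu^\eps_t,\mu_t)\le t\int_0^t\int_{\R}|f_s|^2\mu_s\diff x\diff s$ with $f_s=\partial_x\mu_s\ast\omega_\eps-\partial_x\mu_s$. However, your estimate of this integral cannot give the rate $\eps$.

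Symmetrizing in $y$ produces a second-order difference of $\partial_x\mu_s$, not of $\mu_s$. A bound by $|y|^2$ would therefore require $\partial_x^3\mu_s$, which is not controlled. For compactly supported data, $\partial_x^2\mu_s$ already carries Dirac masses at the free boundary, where $\partial_x\mu_s$ jumps.

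With only $\partial_x^2\mu_s\in\mathcal{M}(\R)$, the correct bound is $\int_{\R}|f_s|\diff x\le\eps\,\|\omega\,|y|\|_{L^1(\R)}\|\partial_x^2\mu_s\|_{\mathcal{M}(\R)}$. The inequality you state, $\int_{\R}|f_s|\mu_s\diff x\le\|\mu_s\|_{L^\infty}\eps^2\|\omega\,|y|^2\|_{L^1}\|\partial_x^2\mu_s\|_{\mathcal{M}}$, is false. It is dimensionally off by one power of length, and any profile with a convex kink at a point where it is positive violates it for small $\eps$. Feeding the correct bound into your scheme ($|f|^2\le\|f\|_{L^\infty}|f|$ everywhere and $\mu_s\le\|\mu_0\|_{L^\infty}$) gives only $\mathcal{W}_2^2\lesssim t^2\eps$, i.e.\ the previously known rate $\sqrt{\eps}$.

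The missing idea is that you must keep the weight $\mu_s(x)$, because it supplies the second factor of $\eps$. Near the free boundary $|f_s|$ is of order one on a layer of width $\eps$, and only the linear vanishing of $\mu_s$ there makes the weighted integral $O(\eps^2)$. The paper applies Jensen in $y$ and splits $(s,x,y)$ into $A=\{\mu_s(x)\le 2\|\partial_x\mu_0\|_{L^\infty}|y|\}$ and its complement $B$.

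Your bound $|f|^2\le\|f\|_{L^\infty}|f|$ is used only on $A$. There, $|\partial_x\mu_s(x-y)-\partial_x\mu_s(x)|^2\le 2\|\partial_x\mu_0\|_{L^\infty}|y|\int_0^1|\partial_x^2\mu_s(x-\theta y)|\diff\theta$ combined with $\mu_s(x)\le 2\|\partial_x\mu_0\|_{L^\infty}|y|$ produces $|y|^2$. It needs only $\|\partial_x^2\mu_s\|_{\mathcal{M}}\le 2\||\partial_x^2\mu_0|^-\|_{\mathcal{M}}$.

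On $B$, the Lipschitz bound gives $\mu_s(x)\le 2\mu_s(x-\theta y)$. After Jensen in $\theta$ one needs the weighted estimate $\int_0^t\int_{\R}\mu_s|\partial_x^2\mu_s|^2\diff x\diff s\le\big(t\|\partial_x\mu_0\|^2_{L^\infty}+\frac12\|\mu_0\|_{L^\infty}\big)\||\partial_x^2\mu_0|^-\|_{\mathcal{M}}$. This is \ref{est:weighted_pressure} in Lemma~\ref{lem:estimates_PME_1D}, obtained by testing $\partial_t\mu=|\partial_x\mu|^2+\mu\,\partial_x^2\mu$ with $\partial_x^2\mu$.

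This weighted bound is absent from your plan and is the actual new ingredient. The Aronson--B\'enilan step you flag as the main obstacle is the standard part, and $\||\partial_x^2\mu_t|^-\|_{\mathcal{M}}$ is simply nonincreasing. The factor $t\|\partial_x\mu_0\|^2_{L^\infty}$ in \eqref{eq:main_thm_rate_nonlocal_to_local} comes from time integration, not from growth of that negative part. Set $A$ contributes $8t^2\|\partial_x\mu_0\|^2_{L^\infty}$ and set $B$ contributes $2t\big(t\|\partial_x\mu_0\|^2_{L^\infty}+\frac12\|\mu_0\|_{L^\infty}\big)$, both multiplied by $\eps^2\||\partial_x^2\mu_0|^-\|_{\mathcal{M}}\|\omega\,|y|^2\|_{L^1}$.

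The paper carries this out for smooth data bounded below by a Gaussian. It then passes to general $\mu_0$ by approximation and the contractivity of both flows.
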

Comparing to the previous arguments in \cite{amassad2025deterministic,PME_1d_Brinkman_Darcy_rate}, thanks to the Theorem \ref{thm:simplified_for_2-Wasserstein}, we can prove
\begin{equation}\label{eq:explanation_step_proof_theorem_rate_nonlocal_local}
\mathcal{W}_2(\mu^{\eps}_{t}, \mu_t) \leq \sqrt{t}\, \left(\int_0^t \int_{\R} |\partial_x \mu_s \ast \omega_{\eps} - \partial_x \mu_s |^2 \mu_s \diff x \diff s \right)^{\frac{1}{2}},
\end{equation}
where $\mu_t$ solves \eqref{eq:PDE_local_1D_eps}. We can use regularity of solutions to \eqref{eq:PDE_local_1D_eps}, namely the Aronson-B\'{e}nilan estimates \cite{MR524760} reviewed in Lemma \ref{lem:estimates_PME_1D}, to control the (RHS) of \eqref{eq:explanation_step_proof_theorem_rate_nonlocal_local}. This is the new ingredient implying better rate of convergence than known so far. From this point of view, our method is very similar to the relative entropy method \cite{MR3615546}, where higher regularity of the limit is used to quantify the rate of convergence. \\

We prove Theorem \ref{thm:rate_of_conv_nonlocal_to_local} using Theorem \ref{thm:simplified_for_2-Wasserstein}, as applying Theorem \ref{thm:main} would require overcoming several technical difficulties. To satisfy the assumptions of Theorem \ref{thm:main}, the continuity equations need to have smooth velocity fields so some regularization is required. For \eqref{eq:PDE_local_1D_eps}, the natural regularization is adding a small constant to the initial condition which is only allowed on a bounded domain (one cannot add a small gaussian on the whole space since the estimates would blow up for large arguments). However, \eqref{eq:PDE_nonlocal_1D_eps} posed on a bounded domain will not satisfy \ref{ass:pointwise_boundary_cond} in general. This is easy to see, say on $\Omega = (-1,1)$ and $\omega(x) = \frac{1}{2}e^{-|x|}$, since the velocity field has a wrong sign at the boundary. Indeed, at $x=-1$ we have
$$
-(\partial_x \omega\ast \mu)(-1) = \int_{-1}^1 e^{-|-1-y|}\, \text{sgn}(-1-y)\, \mu(y) \diff y  < 0 
$$
so the velocity field points outwards of $\Omega$.\\

One can also prove Theorem \ref{thm:rate_of_conv_nonlocal_to_local} on a periodic domain but this requires several modifications, including periodization of the kernel (since it may be not compactly supported). We briefly discuss the necessary modifications in Section \ref{subsect:nonlocal2local_periodic_domain}.

\subsection{General aggregation-diffusion equation}\label{subsect:general_A_D_intro} We now extend Theorem \ref{thm:PME} to the case of a general aggregation-diffusion equation. To this end, we consider
\begin{equation}\label{eq:aggregation_diffusion_1}
\partial_t \mu = \Delta \mu^m + \DIV(\mu \nabla(V_{\mu} + W_{\mu}\ast\mu)),
\end{equation}
\begin{equation}\label{eq:aggregation_diffusion_2}
\partial_t \nu = \Delta \nu^n +  \DIV(\nu \nabla(V_{\nu} + W_{\nu}\ast\nu)),
\end{equation}
posed on either $\Rd$ or a periodic domain. Under Assumption \ref{ass:potential} below, both problems admit unique distributional solutions for an initial condition in $\mathcal{P}_2(\Omega) \cap L^{\infty}(\Omega)$. Indeed, both problems can be interpreted as a 2-Wasserstein gradient flows with the corresponding energies being $\lambda$-geodesically convex for some $\lambda$ (by \ref{ass:hessian_of_potential}). It follows that there exist unique EVI solutions to \eqref{eq:aggregation_diffusion_1} and \eqref{eq:aggregation_diffusion_2}, which are equivalent to subdifferential solutions \cite[Theorem 4.35]{MR3050280} and are also distributional solutions \cite[Propositions 4.36--4.38]{MR3050280}.

\begin{Ass}\label{ass:potential}
We assume that each $\mathcal{V} := V_{\mu}, V_{\nu}, W_{\mu}, W_{\nu}$ satisfies
\begin{enumerate}[label=($\text{A}_{\arabic*}^{\text{pot}}$)]
\item $\mathcal{V}: \Omega\to \R$,
\item\label{ass_item:growth_conditions_V_W_main_thm} there are nonnegative constants $C^0_{\mathcal{V}}$, $C^1_{\mathcal{V}}$, $C^2_{\mathcal{V}}$, $C^3_{\mathcal{V}}$, $C^4_{\mathcal{V}}$ such that 
$$
-C^0_{\mathcal{V}} \leq \mathcal{V}(x) \leq C^1_{\mathcal{V}}+ C^2_{\mathcal{V}}|x|^2, \quad |\nabla \mathcal{V}(x)|\leq C^3_{\mathcal{V}}\, + C^4_{\mathcal{V}}\, |x|,
$$
\item\label{ass:hessian_of_potential} $\nabla^2 \mathcal{V}\in L^{\infty}(\Omega)$,
\item for $\mathcal{V} = W_{\mu}, W_{\nu}$ we have $\mathcal{V}(x) = \mathcal{V}(-x)$.
\end{enumerate}
\end{Ass}
The main result of this subsection reads:
\begin{thm}\label{thm:stability_estimate_AD_eq}
Let $\mu_0 \in \mathcal{P}_2(\Omega) \cap L^{\infty}(\Omega)$. Let $\mu_t$, $\nu_t$ be the distributional solutions to \eqref{eq:aggregation_diffusion_1}--\eqref{eq:aggregation_diffusion_2} on $\Omega=\Rd$ or $\Omega = R\,\Td$ and the same initial condition $\mu_0 = \nu_0$. Suppose that $V_{\mu}$, $V_{\nu}$, $W_{\mu}$, $W_{\nu}$ satisfy Assumption \ref{ass:potential} and that 
$$
1<m_- \leq m \leq n \leq n_+ < \infty.
$$
Finally, suppose that
\begin{equation}\label{eq:ass_convexity_stability_aggr_diffusion_exponent_gamma}
\nabla^2 V_{\nu} \geq c_{V_{\nu}} \, \mathcal{I}, \qquad \quad \nabla^2 W_{\nu} \geq c_{W_{\nu}} \, \mathcal{I}, \quad \qquad \Lambda := \begin{cases}
c_{W_{\nu}} &\mbox{ if } V = 0,\\
c_{V_{\nu}} &\mbox{ if } c_{W_{\nu}} > 0,\\
c_{V_{\nu}} +c_{W_{\nu}}  &\mbox{ if } c_{W_{\nu}} \leq 0.
\end{cases}
\end{equation}
Let $C$ be given by \eqref{eq:constant_continuity_wrt_exponent_W2_PME_main_thm} with the constant $C_{\kappa}$ defined now as
\begin{equation}\label{eq:constant_c_kappa_for_aggr_diff_PDE}
C_{\kappa} = \min\left(\frac{1}{e\,\kappa} + L_{\mu}(t)\,\|\mu_0\|_{L^{\infty}(\Omega)}^{\kappa+1}, \,  \|\mu_0\|_{L^{\infty}(\Omega)}^{\kappa}\,(\|\log \mu_0\|_{L^{\infty}(\Omega)} + \log L_{\mu}(t)) \right),
\end{equation}
and $L_\mu(t) = \exp(t\, (\|\Delta V_{\mu}\|_{L^{\infty}(\Omega)} +\|\Delta W_{\mu}\|_{L^{\infty}(\Omega)}))$. Then,
\begin{equation}\label{eq:continuity_wrt_exponent_W2_aggr-diff_main_thm}
\begin{split}
\mathcal{W}_2(\mu_t, \nu_t) \leq & \, C\, \left(\frac{1-e^{-2\Lambda t}}{2\Lambda}\right)^{1/2}  \,L_{\mu}(t)^{\frac{2n-m-1}{2}}\, |m-n| \\
& + \left(\frac{1-e^{-2\Lambda t}}{2\Lambda}\right)^{1/2}  \left( \int_0^t  \int_{\Omega} \left| \nabla V_{\mu}-\nabla V_{\nu} \right|^2 \, \mu_s \diff x  \diff s\right)^{\frac{1}{2}}\\
&+  \left(\frac{1-e^{-2\Lambda t}}{2\Lambda}\right)^{1/2}  \left(  \int_0^t \int_{\Omega} \left| \nabla (W_{\mu}- W_{\nu})\ast\mu_s \right|^2 \, \mu_s \diff x  \diff s\right)^{\frac{1}{2}}.
\end{split}
\end{equation}
\end{thm}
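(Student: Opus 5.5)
The plan is to apply Theorem \ref{thm:simplified_for_2-Wasserstein}, with $\nu_t$ playing the role of the Wasserstein gradient flow and $\mathcal{G}$ its energy:
$$
\mathcal{G}[\rho] = \frac{1}{n-1}\int_{\Omega} \rho^n \diff x + \int_{\Omega} V_{\nu}\,\rho \diff x + \frac12 \int_{\Omega} \rho\, W_{\nu}\ast\rho \diff x .
$$
Then $\bvnu[\nu] = -\nabla\frac{\delta\mathcal{G}}{\delta\nu}[\nu] = -\frac{n}{n-1}\nabla\nu^{n-1} - \nabla V_\nu - \nabla W_\nu\ast\nu$, and $\bvmu[\mu]$ is the analogous field with $m, V_\mu, W_\mu$.

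First I would verify Assumption \ref{eq:assumption_simplified_formula}. Geodesic convexity \ref{eq:geodesic_conv_G_sect:alternative_proof} with constant $\Lambda$ follows from McCann's displacement convexity of $\int\rho^n$ for $n>1$ and the Hessian bounds \eqref{eq:ass_convexity_stability_aggr_diffusion_exponent_gamma}. Here the usual case distinction for the interaction term applies: when $c_{W_\nu}>0$, using only convexity modulo translations and center of mass, the $W$ contribution is dropped and one keeps $c_{V_\nu}$; when $c_{W_\nu}\le 0$, the constants add. The slope conditions \ref{eq:derivative_via_1st_var_sect:alternative_proof} are the standard subdifferential characterization for internal, potential and interaction energies (Ambrosio–Gigli–Savaré, Chapter 10). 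This requires $\nabla\nu^{n}\in L^2$-type regularity and the analogous regularity for $\mu$, namely $\sqrt{\mu}\,\nabla\mu^{n-1}\in L^2$. I would obtain the latter from the $L^\infty$ bound $\|\mu_t\|_\infty\le L_\mu(t)\|\mu_0\|_\infty$ together with an entropy-type dissipation identity, as in the proof of Theorem \ref{thm:PME}. If direct verification is delicate, I would instead establish \eqref{eq:condition_replacing_A2+A_3_by_Fabian} by smooth approximation.

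Next I would integrate \eqref{eq:claim_formula_for_Wass_gradient_flows} with $\mathcal{W}_2(\mu_0,\nu_0)=0$ and apply Cauchy–Schwarz in time:
$$
\mathcal{W}_2(\mu_t,\nu_t)\le \int_0^t e^{-\Lambda(t-s)} R(s)\diff s \le \Big(\tfrac{1-e^{-2\Lambda t}}{2\Lambda}\Big)^{1/2}\Big(\int_0^t R(s)^2\diff s\Big)^{1/2}.
$$
Here $R(s)$ is the $L^2(\mu_s)$ norm of
$$
-\tfrac{n}{n-1}\nabla\mu^{n-1}+\tfrac{m}{m-1}\nabla\mu^{m-1} - \nabla(V_\nu-V_\mu) - \nabla(W_\nu-W_\mu)\ast\mu .
$$
By Minkowski's inequality this splits into three terms, which produce exactly the three lines of \eqref{eq:continuity_wrt_exponent_W2_aggr-diff_main_thm}.

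The main obstacle is the first, diffusive term, $\int_0^t\int|\nabla(\frac{n}{n-1}\mu^{n-1}-\frac{m}{m-1}\mu^{m-1})|^2\mu$. I would repeat the Theorem \ref{thm:PME} computation and write the integrand as $|\nabla \mu|^2\mu\,|n\mu^{n-2}-m\mu^{m-2}|^2$. I would then bound $|n\mu^{n-2}-m\mu^{m-2}|\le |n-m|\mu^{n-2}+m|\mu^{n-2}-\mu^{m-2}|$ and use the mean value theorem, $|\mu^{n-2}-\mu^{m-2}|\le|n-m|\,\mu^{m-2}|\log\mu|\max(1,\mu^{n-m})$. The logarithm is absorbed by $\mu^{\alpha/2}|\log\mu|\le C_{\alpha/2}$ or $\mu^{\beta/2}|\log\mu|\le C_{\beta/2}$, which produces $C_\kappa$. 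In \eqref{eq:constant_c_kappa_for_aggr_diff_PDE} these constants now carry $L_\mu(t)$ because of the $L^\infty$ growth under the drift.

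Each resulting term has the form $\int_0^t\int \mu^{k}|\nabla\mu|^2$, which I would control by testing the equation for $\mu$ with $\mu^{k-m+1}$. The drift terms contribute $\int\mu^{k-m+2}(\Delta V_\mu+\Delta W_\mu\ast\mu)$, which is handled by a Gronwall argument. This yields $\int\mu_0^{k-m+2}$ times powers of $L_\mu(t)$, and hence the factor $L_\mu(t)^{\frac{2n-m-1}{2}}$. The delicate point is tracking exponents so that the integrals are finite, which is where the restrictions $\alpha<m-1$ and $\beta<2n-m-1$ enter.
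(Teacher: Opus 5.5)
Your skeleton matches the paper's: Theorem~\ref{thm:simplified_for_2-Wasserstein} with $\mathcal{G}$ the energy of the $\nu$-equation, $\Lambda$-convexity, multiplying by $e^{\Lambda t}$ and integrating with Cauchy--Schwarz in time, the Minkowski split into three terms, and the decomposition from Theorem~\ref{thm:PME} controlled by $L_\mu$-weighted power estimates.

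The genuine difference is how you obtain \ref{eq:derivative_via_1st_var_sect:alternative_proof}. The paper first restricts to compactly supported $\mu_0$, which requires finite speed of propagation (Theorem~\ref{thm:aggr-diff-compact-supp}, with the periodic regularization of Remark~\ref{rem:justification_smoothness_solutions_to_aggr-diff}). It then bounds the slope of $\int\rho^n$ by hand, via the Aleksandrov Laplacian and smooth approximants with uniformly bounded supports. Finally it removes compact support by truncation and stability (Step~4).

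You instead use the Ambrosio--Gigli--Savar\'e characterization of the strong subdifferential of the internal energy ($\nabla\rho^n=\rho\, w$ with $w\in L^2(\rho)$), together with the global subdifferential inequality for convex functionals and pointwise convexity of $V_\nu$, $W_\nu$ for the other two pieces. On $\Rd$ this gives both one-sided inequalities, hence \eqref{eq:condition_replacing_A2+A_3_by_Fabian}, without compact support. So Theorem~\ref{thm:aggr-diff-compact-supp} and Step~4 become unnecessary. Membership in $\mathcal{D}(\mathcal{G})$ and \ref{ass:regularity_solutions:add_sect_alternative_proof} then follow from the growth conditions and the second-moment bound for $\mu_t$, $\nu_t$. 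You should state that bound explicitly, since the cross term $\sqrt{\mu_t}\,\nabla\frac{\delta\mathcal{G}}{\delta\nu}[\mu_t]$ needs it.

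The price is a heavier black box that lives on $\Rd$. You say nothing about $\Omega=R\,\Td$, which is part of the statement. There you would need a periodic substitute, such as the paper's lifting of torus geodesics to $\Rd$ (Lemma~\ref{lem:Lp_norm_of_geodesic_from_torus_to_Rd} and Step~5).

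Three further points need fixing.

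First, the branch $\Lambda=c_{W_\nu}$ for $V=0$ is missing. When $c_{W_\nu}>0$, your case analysis gives $\Lambda=c_{V_\nu}=0$, which is weaker than the claim. You need three facts:
\begin{itemize}
\item the barycentres of $\mu_t$ and $\nu_t$ are conserved, which follows from $V_\mu=V_\nu=0$ and the evenness of $W_\mu$, $W_\nu$;
\item the geodesic between them therefore keeps that barycentre;
\item the interaction energy is $c_{W_\nu}$-convex on that slice.
\end{itemize}
Theorem~\ref{thm:simplified_for_2-Wasserstein} only needs convexity on a geodesically convex set containing both flows. In your subdifferential formulation this is the identity $\frac12\iint|(T(x)-x)-(T(y)-y)|^2\diff\mu_t(x)\diff\mu_t(y)=\mathcal{W}_2^2(\mu_t,\nu_t)-\left|\int(T(x)-x)\diff\mu_t(x)\right|^2$.

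Second, the branch of $C_\kappa$ containing $\log L_\mu(t)$ requires the lower bound $\mu_t\ge L_\mu(t)^{-1}\essinf\mu_0$ from the comparison principle. The $L^\infty$ growth alone is not enough.

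Third, your exponents are off by one: testing with $\mu^{k-m+1}$ controls $\int\mu^{k-1}|\nabla\mu|^2$, not $\int\mu^{k}|\nabla\mu|^2$. To control $\int_0^t\int\mu^{k}|\nabla\mu|^2$, test with $\mu^{k-m+2}$. This gives $m(k-m+2)(k-m+3)\int_0^t\int\mu^k|\nabla\mu|^2\le L_\mu(t)^{k-m+2}\int\mu_0^{k-m+3}$. For $k=2n-3$, $2m-3-\alpha$, $2n-3-\beta$ this produces $\int\mu_0^{2n-m}$, $\int\mu_0^{m-\alpha}$, $\int\mu_0^{2n-m-\beta}$, as in the constant $C$. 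Positivity of $k-m+2$ is exactly the restriction $\alpha<m-1$, $\beta<2n-m-1$.
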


We remark that when $\Lambda =0$, the term $\frac{1-e^{-2\Lambda t}}{2\Lambda}$ should be interpreted as $t$ so that we recover the same constant as for the porous medium equation case. Furthermore, we remark that the estimate for the case $m=n$ has been obtained in \cite[Appendix A.2]{MR4896516}.\\

To prove Theorem \ref{thm:stability_estimate_AD_eq}, we use Theorem \ref{thm:simplified_for_2-Wasserstein}, and one may wonder why we do not instead apply Theorem~\ref{thm:main}. The reason is that, in order to apply Theorem \ref{thm:main}, we need to regularize equations \eqref{eq:aggregation_diffusion_1}--\eqref{eq:aggregation_diffusion_2}; see, for example, the proof of Theorem \ref{thm:PME} via Theorem \ref{thm:main} in Section \ref{sect:applications_to_PME}. To obtain the required regularity for the vector fields in Assumption \ref{ass:BIG_regularity_of_sln_and_vf}, we must work on a bounded domain. Even if we start with an initial condition $\mu_0^{\eps} \geq \eps$ on a bounded domain $\Omega$, it is not known whether this property is preserved for all times on a bounded domain, unlike in the periodic or whole-space setting $\Rd$. Therefore, the regularization has to be performed on a periodic domain. This, in turn, requires a suitable modification of the potentials so that they become periodic, as performed, for example, in Remark \ref{rem:justification_smoothness_solutions_to_aggr-diff}. Consequently, the lower bound on the hessians in \eqref{eq:ass_convexity_stability_aggr_diffusion_exponent_gamma} becomes weaker (see \eqref{eq:estimates_hessian_VP_app:finite_speed} for the estimate of the hessian of the modified potential), leading to a weaker constant in \eqref{eq:continuity_wrt_exponent_W2_aggr-diff_main_thm}.

\begin{rem}
Comparing with Subsection \ref{subsect:PME_introduction} on the case of pure porous medium equation, we cannot prove an analogue of Theorem \ref{thm:PME_incompress_limit} on the rate $\frac{1}{\sqrt{m}}$ for the limit $m\to\infty$. The reason is that the result in Theorem \ref{thm:PME_incompress_limit} is based on the estimate
$$
k\,(k+1) \, m\, \int_0^t \int_{\Omega} \mu_s^{k+m-2}\, |\nabla \mu_s|^2 \diff x \diff s \leq \int_{\Omega} \mu^{k+1}_0 \diff x
$$
for all $k\geq0$, see \eqref{eq:energy_estimate_general_k}. For equation \eqref{eq:aggregation_diffusion_1}, it takes the form
$$
m\, k \, (k+1) \int_0^t \int_{\Omega} \mu_s^{m+k-2}\, |\nabla \mu_s|^2 \diff x \diff s \leq e^{k\,t\, (\|\Delta V_{\mu}\|_{L^{\infty}(\Omega)} +\|\Delta W_{\mu}\|_{L^{\infty}(\Omega)})} \, \int_{\Omega} \mu_0^{k+1},
$$
see \eqref{eq:a_priori_estimate_power_density_A-D}. The exponential factor does not allow to control uniformly the (RHS) for large $k$. We remark that the rate for $m\to \infty$ for \eqref{eq:aggregation_diffusion_1} has been achieved recently in \cite{david2024improved}.
\end{rem}

\subsection{Structure of the paper}
The structure of the paper is as follows. In Section \ref{sect:proof_2_main_theorems} we prove Theorem \ref{thm:simplified_for_2-Wasserstein} (Subsection \ref{sect:alternative_proof}) and Theorem \ref{thm:main} (Subsection \ref{sect:proof_main_theorem}). Then, Sections~\ref{sect:applications_to_PME}, \ref{sect:applications_to_nonlocal2local}, and \ref{sect:aggregation-diffusion} present results regarding the porous medium equation, the nonlocal-to-local limit, and the aggregation-diffusion equation, introduced in Sections \ref{subsect:PME_introduction}, \ref{subsect:intro_nonlocal2local_limit}, and \ref{subsect:general_A_D_intro}. There are also five appendices. Appendix \ref{app:semigroup_theory} reviews semigroup theory while Appendix \ref{app:continuity_eq_opt_transport} summarizes exploited facts about the continuity equation and the optimal transport theory (both are relevant to the proof of Theorem \ref{thm:main}). Next, Appendix \ref{app:rate_PME_nto1} presents another rate of convergence relevant for the discussion in Section \ref{subsect:PME_introduction} while Appendix \ref{app:PME_m2_and_nonlocal2local_facts} summarizes estimates for the quadratic porous medium equation relevant for the proof of Theorem \ref{thm:rate_of_conv_nonlocal_to_local}. Finally, Appendix \ref{app:general_aggr_diff_PDE} presents a finite speed of propagation result for aggregation-diffusion equations, which is necessary for the proof of Theorem \ref{thm:stability_estimate_AD_eq}. 
 
\section{Proofs of Theorem \ref{thm:simplified_for_2-Wasserstein} and \ref{thm:main}}\label{sect:proof_2_main_theorems}

\subsection{Proof of Theorem \ref{thm:simplified_for_2-Wasserstein}}\label{sect:alternative_proof}
First, by \cite[Theorem 8.4.7]{MR2129498}, 
\begin{equation}\label{eq:derivative_W_2^2_alternative_proof_our_formula}
\frac{\diff}{\diff t} \frac{1}{2} \mathcal{W}_2^2(\mu_t, \nu_t) =  \int_{\Omega} \nabla\varphi(x) \cdot \bvmu[\mu_t]\diff \mu_t(x)  - \int_{\Omega} \nabla\phi(x) \cdot \nabla\frac{\delta \mathcal{G}}{\delta \nu}[\nu_t] \, \diff \nu_t(x),
\end{equation}
where $\varphi$, $\phi$ are Kantorovich potentials for the optimal transport problem with a quadratic cost $c(x,y)= \frac{1}{2}|x-y|^2$ between $\mu_t$, $\nu_t$ and $\nu_t$, $\mu_t$, respectively, where we omit the time dependence for simplicity. On $\Omega=R\,\Td$, one can obtain \eqref{eq:derivative_W_2^2_alternative_proof_our_formula} by exploiting the dual formulation of optimal transport as in \cite[Theorems 1.25, 5.24]{MR3409718}.\\

We will prove that 
\begin{equation}\label{eq:additional_claim_geodesic_convexity}
 \lambda \, \mathcal{W}_2^2(\mu_t,\nu_t) - \int_{\Omega} \nabla \phi(x) \cdot \nabla\frac{\delta \mathcal{G}}{\delta \nu}[\nu_t] \diff \nu_t(x) \leq  \int_{\Omega} \nabla\varphi(x) \cdot \nabla\frac{\delta \mathcal{G}}{\delta \nu}[\mu_t] \diff \mu_t(x).
\end{equation}
Assuming \eqref{eq:additional_claim_geodesic_convexity} is proved, we deduce from \eqref{eq:derivative_W_2^2_alternative_proof_our_formula} that
\begin{multline*}
\frac{\diff}{\diff t} \frac{1}{2} \mathcal{W}_2^2(\mu_t, \nu_t) + \lambda \, \mathcal{W}_2^2(\mu_t,\nu_t) \leq  \int_{\Omega} \nabla \varphi(x) \cdot \left( \nabla\frac{\delta \mathcal{G}}{\delta \nu}[\mu_t] + \bvmu[\mu_t] \right)\diff \mu_t(x) \\
\leq \left(\int_{\Omega} |\nabla\varphi(x)|^2 \diff \mu_t(x)\right)^{\frac{1}{2}} \left( \int_{\Omega} \left| \nabla\frac{\delta \mathcal{G}}{\delta \nu}[\mu_t] + \bvmu[\mu_t] \right|^2 \diff \mu_t(x) \right)^{\frac{1}{2}}.
\end{multline*}
Since $\nabla \varphi(x) = x- T(x)$, $\int_{\Omega} |\nabla \varphi(x)|^2 \diff \mu_t(x) = \mathcal{W}_2^2(\mu_t,\nu_t)$ (on $\Omega=R\,\Td$, see \eqref{eq:Wass_dis_periodic_integral_of_Kantorovich}), we arrive at \eqref{eq:claim_formula_for_Wass_gradient_flows}.\\

It remains to prove \eqref{eq:additional_claim_geodesic_convexity}. Let $\gamma_s$ be a geodesic connecting $\mu_t$ with $\nu_t$, where we omit the dependence on $t$ for simplicity. By $\lambda\, \mathcal{W}_2^2(\rho_0, \rho_1)$-convexity of the function $s\mapsto \mathcal{G}[\gamma_s]$ in \ref{eq:geodesic_conv_G_sect:alternative_proof} and $\mu_t, \nu_t \in \mathcal{D}(\mathcal{G})$ in \ref{ass:regularity_solutions:add_sect_alternative_proof}, we know that $\frac{\diff}{\diff s} \mathcal{G}[\gamma_s] \Big|_{s=0}$ and $\frac{\diff}{\diff s} \mathcal{G}[\gamma_s] \Big|_{s=1}$ exist (we use $\mu_t, \nu_t \in \mathcal{D}(\mathcal{G})$ in order to guarantee that we do not subtract two infinite numbers) so we deduce from \ref{eq:geodesic_conv_G_sect:alternative_proof}
$$
\mathcal{G}[\nu_t] \geq \mathcal{G}[\mu_t] + \frac{\diff}{\diff s}\mathcal{G}[\gamma_s] \Big|_{s=0} + \frac{\lambda}{2} \, \mathcal{W}_2^2(\mu_t,\nu_t), \,\,\, \mathcal{G}[\mu_t] \geq \mathcal{G}[\nu_t] - \frac{\diff}{\diff s}\mathcal{G}[\gamma_s] \Big|_{s=1}+ \frac{\lambda}{2} \, \mathcal{W}_2^2(\mu_t,\nu_t). 
$$
In particular, we have
\begin{equation}\label{eq:monotonicity_derivatives_geodesics_sect:additional_proof}
 \frac{\diff}{\diff s}\mathcal{G}[\gamma_s] \Big|_{s=1} \geq \frac{\diff}{\diff s}\mathcal{G}[\gamma_s] \Big|_{s=0} + \lambda \, \mathcal{W}_2^2(\mu_t,\nu_t).
\end{equation}
Applying \ref{eq:derivative_via_1st_var_sect:alternative_proof} we obtain
$$
\int_{\Omega} \nabla \frac{\delta \mathcal{G}}{\delta \nu}[\nu_t]\, \nabla \phi(x) \diff \nu_t(x)\geq -\int_{\Omega} \nabla \frac{\delta \mathcal{G}}{\delta \nu}[\mu_t]\, \nabla\varphi(x) \diff \mu_t(x) + \lambda \, \mathcal{W}_2^2(\mu_t,\nu_t).
$$
Rearranging this inequality, we arrive at \eqref{eq:additional_claim_geodesic_convexity}.
 
\subsection{A Trotter-Kato type estimate}\label{sect:trotter-kato_flow_estimate}
Here, we prove a technical estimate, in the spirit of Trotter-Kato-type results, for approximating the semigroup $e^{t(A+B)}$ by the compositions $\left(e^{\frac{t}{n}A} \, e^{\frac{t}{n}B}\right)^n$, where $A$, $B$ are some operators \cite{MR108732, MR538020}. This estimate is a key ingredient in the proof of Theorem~\ref{thm:main}. Given a nonautonomous vector field ${\bf v}(t,x):(0,T) \times \Omega \to \Rd$ we define its flow $X(s,t,x)$ by the formula
\begin{equation}\label{eq:flow_of_the_vf}
    \begin{split}
 \partial_t  X(s,t,x) &= {\bf v}(t, X(s,t,x)),\\
X(s,s,x) &= x.       
    \end{split}
\end{equation}
We use three arguments $s$, $t$ and $x$ in $X(s,t,x)$ to keep track of the initial time, current time and initial position, respectively. Equation \eqref{eq:flow_of_the_vf} is well-posed assuming ${\bf v} \in L^1(0,T; \BL(\Omega))$, where $\BL(\Omega)$ is the space of bounded Lipschitz functions on $\Omega$ with norm \eqref{eq:norm_BL}. If $\Omega$ is a bounded domain, we need to assume ${\bf v} \cdot \bn \leq 0$ on $\partial \Omega$ for a.e. $t$, where $\bn$ is an outward normal vector to $\partial \Omega$.   

\begin{proposition}\label{thm:trotter-kato-estimate}
    Let ${\bf v^1}, {\bf v^2} :(0,T)\times\Omega \to \R^d$ be two vector fields (with ${\bf v^1} \cdot \bn \leq 0$, ${\bf v^2} \cdot \bn \leq 0$ on $\partial \Omega$ for a.e. $t$ if $\Omega$ is a bounded smooth domain) and let $X^{1}(s,t,x)$, $X^{2}(s,t,x)$ be the flows of ${\bf v^1}$ and ${\bf v^2}$, respectively. Let $ X^{1+2}(s,t,x)$ be the flow generated by the sum ${\bf v^{1} + v^2}$. Then
\begin{align*}
    &\left| X^{1+2}(s,s+h,x) - X^1\big(s, s+h, X^2(s,s+h,x)\big)\right| \leq   \\
    & \qquad \qquad \qquad \qquad \qquad  \leq 2 \left( \int_{s}^{s+h} 
\left(\|{\bf v^1}(u,\cdot) \|_{\BL(\Omega)} + \|{\bf v^2}(u,\cdot) \|_{\BL(\Omega)} \right) \diff u \right)^2.
\end{align*}
\end{proposition}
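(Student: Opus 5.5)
We compare the two trajectories through their integral forms and run a Gronwall-type argument in which every deviation is measured by the small quantity
$$
\Lambda(u) := \int_s^{u} \left(\|{\bf v^1}(r,\cdot)\|_{\BL(\Omega)} + \|{\bf v^2}(r,\cdot)\|_{\BL(\Omega)}\right) \diff r, \qquad u\in[s,s+h].
$$
Fix $x$ and write $y := X^2(s,s+h,x)$. The composite point $X^1(s,s+h,y)$ is the endpoint of a two-stage path: first move along ${\bf v^2}$ over $[s,s+h]$, then along ${\bf v^1}$ over $[s,s+h]$ starting from $y$. The point $X^{1+2}(s,s+h,x)$ moves along both fields simultaneously. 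Writing both in integral form,
$$
X^{1+2}(s,s+h,x) = x + \int_s^{s+h} {\bf v^1}(u,X^{1+2}(s,u,x)) \diff u + \int_s^{s+h} {\bf v^2}(u,X^{1+2}(s,u,x))\diff u,
$$
$$
X^1(s,s+h,y) = x + \int_s^{s+h} {\bf v^2}(u,X^2(s,u,x))\diff u + \int_s^{s+h}{\bf v^1}(u, X^1(s,u,y))\diff u .
$$
Subtracting reduces the difference to two integrals of velocity differences. Each is controlled by the Lipschitz constant times the distance between the corresponding trajectories. The well-posedness of the flows (including invariance of $\overline{\Omega}$ under ${\bf v}\cdot\bn\le 0$ on bounded domains) is taken as given, as stated before the proposition.

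\textbf{Step 1: elementary displacement bounds.} For $u\in[s,s+h]$ we have $|X^{1+2}(s,u,x)-x| \le \Lambda(u)$ and $|X^2(s,u,x)-x|\le \Lambda(u)$, because the speeds are bounded by the $L^\infty$ parts of the norms. Also $|X^1(s,u,y)-x| \le |X^1(s,u,y)-y| + |y-x| \le \Lambda(s+h)$.

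\textbf{Step 2: Lipschitz comparison.} For the ${\bf v^2}$ term,
$$
|{\bf v^2}(u,X^{1+2}(s,u,x)) - {\bf v^2}(u,X^2(s,u,x))| \le \Lip({\bf v^2}(u))\cdot 2\Lambda(s+h).
$$
For the ${\bf v^1}$ term,
$$
|{\bf v^1}(u,X^{1+2}(s,u,x)) - {\bf v^1}(u,X^1(s,u,y))| \le \Lip({\bf v^1}(u))\bigl(|X^{1+2}(s,u,x)-x| + |X^1(s,u,y)-x|\bigr) \le \Lip({\bf v^1}(u))\cdot 2\Lambda(s+h).
$$
Integrating in $u$ and using $\Lip \le \|\cdot\|_{\BL}$ gives a bound of $2\Lambda(s+h)\int_s^{s+h}(\Lip {\bf v^1}+\Lip{\bf v^2})\diff u \le 2\Lambda(s+h)^2$. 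This is exactly the claimed inequality.

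\textbf{Main difficulty.} No Gronwall loop is needed, since every trajectory deviation is estimated crudely by $\Lambda$ and never in terms of the unknown difference itself. The only point requiring care is bookkeeping the factor $2$. Step 1 must bound each trajectory's distance to the common anchor $x$ by at most $\Lambda(s+h)$, and the two distances are then summed via the triangle inequality. If the bound on $|X^1(s,u,y)-x|$ were done less carefully, it would produce constants larger than $2$. On a bounded domain one also checks that all trajectories stay in $\overline{\Omega}$, so the Lipschitz bounds apply; this follows from the boundary sign condition.
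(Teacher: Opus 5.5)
Your proof is correct and takes essentially the same route as the paper. Both subtract the integral representations and split the difference into the ${\bf v^2}$- and ${\bf v^1}$-velocity terms, then bound each by the Lipschitz constant times crude displacement estimates anchored at $x$, with no Gronwall step, which yields the factor $2$. The differences are cosmetic: you merge $|X^1(s,u,y)-y|+|y-x|$ where the paper uses a three-term triangle inequality, and you work with $\BL$ norms from the start instead of keeping $L^{\infty}$ and $\Lip$ separate until the end.
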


\begin{proof}
Let us denote the difference to be estimated by $\Delta$. By the definition of $X^2$
$$
X^2(s, s+h, x) = x + \int_{s}^{s+h} {\bf v^2}(u,X^2(s, u, x)) \diff u,
$$
so that by the definition of $X^1$
\begin{multline*}
X^1\big(s, s+h, X^2(s,s+h,x)\big) =\\=
x + \int_{s}^{s+h} {\bf v^2}(u,X^2(s, u, x)) \diff u + \int_{s}^{s+h} {\bf v^1}\big(u, X^1\big(s, u, X^2(s,s+h,x)\big)\big) \diff u.
\end{multline*}
On the other hand, $X^{1+2}$ satisfies
$$
X^{1+2}(s,s+h,x) = x + \int_{s}^{s+h} {\bf v^2}(u,X^{1+2}(s, u, x)) \diff u + \int_{s}^{s+h} {\bf v^1}(u,X^{1+2}(s, u, x)) \diff u.
$$
Subtracting the two terms above we get
\begin{align*}
&\Delta \leq  \int_{s}^{s+h} \left|{\bf v^2}(u,X^{1+2}(s, u, x)) - {\bf v^2}(u,X^2(s, u, x))\right| \diff u \\
& \quad + \int_{s}^{s+h} \left|{\bf v^1}(u,X^{1+2}(s, u, x)) -  {\bf v^1}\big(u, X^1\big(s, u, X^2(s,s+h,x)\big)\big)\right| \diff u\\
& \quad =: A+ B. \phantom{\int_{s+h}^{s+2h}}
\end{align*}
\underline{Term $A$.} Using Lipschitz continuity of ${\bf v^2}$ we obtain
\begin{align*}
A &\leq \int_{s}^{s+h} \Lip({\bf v^2}(u,\cdot))\,  |X^{1+2}(s, u, x) -X^2(s, u, x)| \diff u \\
&\leq \int_{s}^{s+h} \Lip({\bf v^2}(u,\cdot))\,  \left(|X^{1+2}(s, u, x) - x| + |X^2(s, u, x) - x|\right) \diff u, 
\end{align*}
where in the second step we added and subtracted $x$. Now, since $u \in [s,s+h]$,
\begin{equation}\label{eq:flow_difference_integral_in_time}
|X^{1+2}(s, u, x) - x| \leq \int_{s}^{s+h} 
\left(\|{\bf v^1}(\tau,\cdot) \|_{L^{\infty}(\Omega)} + \|{\bf v^2}(\tau,\cdot) \|_{L^{\infty}(\Omega)} \right) \diff \tau 
\end{equation}
and similarly for the term $|X^{2}(s, u, x) - x|$. We conclude that
$$
A \leq 2\, \int_{s}^{s+h} \Lip({\bf v^2}(u,\cdot)) \diff u\,  \int_{s}^{s+h} \left(\|{\bf v^1}(u,\cdot) \|_{L^{\infty}(\Omega)} + \|{\bf v^2}(u,\cdot) \|_{L^{\infty}(\Omega)}\right) \diff u.
$$
\underline{Term $B$.} Using Lipschitz continuity of ${\bf v^1}$ and 
$$
B \leq \int_{s}^{s+h} \Lip({\bf v^1}(u,\cdot))\, \big|X^{1+2}(s, u, x) - X^1\big(s, u, X^2(s,s+h,x)\big)\big| \diff u.
$$
The term inside can be estimated by triangle inequality
\begin{multline*}
\big|X^{1+2}(s, u, x) - X^1\big(s, u, X^2(s,s+h,x)\big)\big|  \leq \big|X^{1+2}(s, u, x) -x \big| + \\ + \big|x- X^2(s,s+h,x)  \big| +\big|X^2(s,s+h,x) - X^1\big(s, u, X^2(s,s+h,x)\big)\big|.
\end{multline*}
Since all three terms can be written as an integral of a vector field (see \eqref{eq:flow_difference_integral_in_time} for example), we arrive at 
\begin{multline*}
\big|X^{1+2}(s, u, x) - X^1\big(s, u, X^2(s,s+h,x)\big)\big| \leq \\
\leq 2\, \int_{s}^{s+h} 
\left(\|{\bf v^1}(\tau,\cdot) \|_{L^{\infty}(\Omega)} + \|{\bf v^2}(\tau,\cdot) \|_{L^{\infty}(\Omega)} \right) \diff \tau. 
\end{multline*}
Collecting these estimates, we obtain
$$
B \leq \, 2\, \int_{s}^{s+h} \Lip({\bf v^1}(u,\cdot)) \diff u \int_{s}^{s+h} 
\left(\|{\bf v^1}(u,\cdot) \|_{L^{\infty}(\Omega)} + \|{\bf v^2}(u,\cdot) \|_{L^{\infty}(\Omega)} \right) \diff u.  
$$
By estimating $\|{\bf v^i}(u,\cdot) \|_{L^{\infty}(\Omega)}, \Lip({\bf v^i}(u,\cdot)) \leq \|{\bf v^i}(u,\cdot) \|_{\BL(\Omega)}$, the proof is concluded. 
\end{proof}

\subsection{Proof of Theorem \ref{thm:main}}\label{sect:proof_main_theorem}

We recall that we consider both problems \eqref{eq:PDE_continuity_equation_1} and \eqref{eq:PDE_continuity_equation_2} with the same initial condition $\mu_0 = \nu_0 \in \mathcal{A}_1$. By Assumptions \ref{ass:abs_cont_first_map}--\ref{ass:semigroup_second_map}, there exists a~Lipschitz, absolutely continuous in time semigroup $\mathcal{S}_{t}$ generated by \eqref{eq:PDE_continuity_equation_2} so that $\nu_t = \mathcal{S}_{t} \mu_0$. Moreover, the map $t\mapsto \mu_t$ is absolutely continuous (Assumption \ref{ass:abs_cont_first_map}). Therefore, Lemma \ref{lem:bressan_estimate_between_map_semigroup_aut} applied on the metric space $(\mathcal{A}_2, \mathcal{W}_p)$ yields
\begin{equation}\label{eq:first_step_of_the_proof_application_bressan}
\mathcal{W}_p(\mu_t, \nu_t) =  \mathcal{W}_p(\mu_t, \mathcal{S}_{t} \mu_0) \leq  \int_0^t K(t-s) \,\liminf_{h\to0^+} \frac{\mathcal{W}_p(\mu_{s+h}, \mathcal{S}_{h} \mu_{s})}{h} \diff s. 
\end{equation}
We now fix $h$ and we want to estimate $\mathcal{W}_p(\mu_{s+h}, \mathcal{S}_{h} \mu_{s})$. Since both problems generate a semigroup (Assumption \ref{ass:semigroup_second_map}), we have that $\rho_\tau^1:=\mu_{s+\tau}$ and $\rho_{\tau}^2:=  \mathcal{S}_{\tau} \mu_{s}$, where $\tau \in [0,h]$, solve the PDEs
\begin{equation}\label{eq:continuity_equation_PDE}
\begin{split}
\partial_\tau \rho_{\tau}^{1} + \DIV(\rho_{\tau}^{1} \, \bvmu[\rho^1_{\tau}]) = 0, \quad  \partial_\tau \rho_{\tau}^{2} + \DIV(\rho_{\tau}^{2} \, \bvnu[\rho^2_{\tau}]) = 0, \quad   
\rho^1_0 = \rho^2_0 = \mu_s. 
\end{split}
\end{equation}
To estimate $\mathcal{W}_p(\mu_{s+h}, \mathcal{S}_{h} \mu_{s}) = \mathcal{W}_p(\rho^1_{h}, \rho^2_{h})$, we introduce an auxiliary curve $\rho_{\tau}^3$ with $\tau \in [0,h]$ solving
\begin{align*}
\partial_\tau \rho_{\tau}^{3} + \DIV\left(\rho_{\tau}^{3}\, \left(\bvnu[\rho^2_{\tau}]- \bvmu[\rho^1_{\tau}] \right) \right) &= 0,\\
\rho_0^3 &= \mu_{s+h}.
\end{align*}
The PDE is well-posed by the regularity assumption \ref{ass:regularity_vector_fields} and the boundary conditions~\ref{ass:pointwise_boundary_cond} which implies $\left(\bvnu[\rho^2_{\tau}]- \bvmu[\rho^1_{\tau}] \right) \cdot \bn \leq 0$ on $\partial \Omega$ so that characteristics stay in $\Omega$ (see Lemma~\ref{lem:representation_solution_continuity_equation} for the well-posedness result). Moreover, by Lemma \ref{lem:p_moment_stays_bounded}, we have $\rho^3_\tau \in \mathcal{P}_p(\Omega)$ for all $\tau \in [0,h]$. By the triangle inequality 
\begin{equation}\label{eq:triangle_equation_mu123}
\frac{\mathcal{W}_p(\rho^1_{h}, \rho^2_{h})}{h} \leq \frac{\mathcal{W}_p(\rho^1_{h}, \rho^3_{h})}{h} + \frac{\mathcal{W}_p(\rho^3_{h}, \rho^2_{h})}{h}.
\end{equation}
The terms on the (RHS) of \eqref{eq:triangle_equation_mu123} will be analyzed independently.\\

\underline{The term ${\mathcal{W}_p(\rho^3_{h}, \rho^2_{h})}/{h}$.} We will prove
\begin{equation}\label{eq:main_part_curves_3_2}
\lim_{h\to 0^+} \frac{\mathcal{W}_p(\rho^3_{h}, \rho^2_{h})}{h} = 0.
\end{equation}
We define velocity fields 
$$
{\bf V^1}(\tau,x) := \bvmu[\rho^1_{\tau}], \qquad {\bf V^2}(\tau,x) := \bvnu[\rho^2_{\tau}], \qquad {\bf V^3}(\tau, x) := {\bf V^2}(\tau,x) - {\bf V^1}(\tau,x),
$$
and $X^{i}$ to be the corresponding flow of the vector field ${\bf V^i}$ as in \eqref{eq:flow_of_the_vf}. The flows are well-defined because \ref{ass:pointwise_boundary_cond} implies ${\bf V^1}(\tau,x) \cdot \bn, {\bf V^2}(\tau,x) \cdot \bn, {\bf V^3}(\tau,x) \cdot \bn \leq 0$ on $\partial \Omega$. It follows from Lemma~\ref{lem:representation_solution_continuity_equation} that 
\begin{equation}\label{eq:representation_measures_auxiliary}
\rho^3_h = X^{3}(0,h,\cdot)^{\#}\, X^1(0,h,\cdot)^{\#} \, \mu_s, \qquad \qquad \rho^2_h = X^2(0,h, \cdot)^{\#} \mu_s.
\end{equation}
Note that $X^{3}(0,h,\cdot)^{\#}\, X^1(0,h,\cdot)^{\#} = X^3(0,h, X^1(0, h, \cdot))^{\#}$ so that using Lemma \ref{lem:estimate_Wasserstein_by_pushforwards} 
$$
\mathcal{W}_p(\rho^3_{h}, \rho^2_{h}) \leq \left\| X^{3}(0,h,X^1(0,h,\cdot)) -  X^2(0,h, \cdot) \right\|_{L^{\infty}(\Omega)} \, \| \mu_s \|^{1/p}_{L^1(\Omega)}.
$$
Now, it remains to notice that ${\bf V^3} + {\bf V^1} = {\bf V^2}$ so we are exactly in the setting of Proposition~\ref{thm:trotter-kato-estimate}. From \ref{ass:regularity_vector_fields} we know that ${\bf V^1}$, ${\bf V^2}$ belong to $L^{2+\delta}(0,T; \BL(\Omega))$. Hence, using Proposition~\ref{thm:trotter-kato-estimate} and H{\"o}lder inequality
\begin{multline*}
\left\| X^{3}(0,h,X^1(0,h,\cdot)) -  X^2(0,h, \cdot) \right\|_{L^{\infty}(\Omega)}  \leq \\ 
\leq C\, \left( \int_{0}^{h} 
\left(\|{\bf V^1}(u,\cdot) \|_{\BL(\Omega)} + \|({\bf V^2 -V^1})(u,\cdot) \|_{\BL(\Omega)} \right) \diff u \right)^2 \leq C\,h^{2\,\frac{1+\delta}{2+\delta}},
\end{multline*}
where the second constant $C$ involves norms of ${\bf V^1}$, ${\bf V^2}$. Hence, we deduce $\mathcal{W}_p(\rho^3_{h}, \rho^2_{h}) \leq C\,h^{2\,\frac{1+\delta}{2+\delta}}$ where $C$ does not depend on $h$. Since $2\,\frac{1+\delta}{2+\delta}>1$, the conclusion follows.\\

\underline{The term ${\mathcal{W}_p(\rho^1_{h}, \rho^3_{h})}/{h}$.} By the Benamou-Brenier formula (Lemma \ref{lem:B-B}) 
\begin{equation}\label{eq:formula_for_wass_bb_in_the_main_proof}
\frac{\mathcal{W}_p(\rho^1_{h}, \rho^3_{h})}{h} \leq  \left(\frac{1}{h} \int_0^h \int_{\Omega} \left|\bvnu[\rho^2_{\tau}]- \bvmu[\rho^1_{\tau}]  \right|^p \diff \rho_{\tau}^3 \diff \tau \right)^{\frac{1}{p}}.
\end{equation}
Here, if $\Omega$ is a bounded domain, we could use Lemma \ref{lem:B-B} because we know that on $\partial \Omega$ we have $(v^2[\mu^2_{\tau}]- v^1[\mu^1_{\tau}]) \cdot \bn \leq 0$ on $\partial \Omega$ for a.e. $\tau\in[0,h]$ thanks to \ref{ass:pointwise_boundary_cond}.\\

Now, we claim
\begin{equation}\label{eq:claim_limit_as_hto0}
\lim_{h\to 0^+} \frac{1}{h} \int_0^h \int_{\Omega} \left|\bvnu[\rho^2_{\tau}]- \bvmu[\rho^1_{\tau}]  \right|^p \diff \rho_{\tau}^3 \diff \tau = \int_{\Omega} \left|\bvnu[\mu_s]- \bvmu[\mu_s]  \right|^p \diff \mu_s.
\end{equation}
Note carefully that $\rho^3_{\tau}$ depends on $h$ through the initial condition. To prove \eqref{eq:claim_limit_as_hto0}, we let $f(\tau,x) := \left|\bvnu[\rho^2_{\tau}]- \bvmu[\rho^1_{\tau}]  \right|^p$, $f_0 := f(0,x) = \left|\bvnu[\mu_s]- \bvmu[\mu_s]  \right|^p$. By \ref{ass:C_admissibility} and \ref{ass:continuity_difference} we have $f_0\in L^{\infty}(\Omega)$ and $f \in L^{\infty}((0,h)\times\Omega)$. We let $\Omega_n = \Omega\cap \{|x|\leq n\}$ and we estimate the difference
\begin{align*}
&\left|\frac{1}{h} \int_0^h \int_{\Omega} (f(\tau,x)-f_0(x)) \diff \rho^3_{\tau}(x) \diff \tau \right| \leq\\ 
&\leq \frac{1}{h} \int_0^h \int_{\Omega_n} |f(\tau,x)-f_0(x)| \diff \rho^3_{\tau}(x) \diff \tau  + \frac{1}{h} \int_0^h \int_{\Omega\setminus \Omega_n} |f(\tau,x)-f_0(x)| \diff \rho^3_{\tau}(x) \diff \tau\\
&\leq  \esssup_{\tau \in [0,h]} \|f(\tau,\cdot)-f_0(\cdot)\|_{L^{\infty}(\Omega_n)} + (\|f\|_{L^{\infty}(\Omega)} + \|f_0\|_{L^{\infty}(\Omega}) \sup_{\tau \in [0,h]} \rho^3_{\tau}(\Omega \setminus \Omega_n). 
\end{align*}
The first term converges as $h \to 0^+$ for each $n$ by \ref{ass:continuity_difference}. Regarding the second term, we first estimate
\begin{equation}\label{eq:estimate_x3_x1_identity}
\begin{split}
&|X^3(0,\tau, X^1(0,h,x)) - x|
\leq \\
&\leq \|X^3(0,\tau, X^1(0,h,x)) - X^1(0,h,x)\|_{L^{\infty}(\Omega)} + \|X^1(0,h,x) - x\|_{L^{\infty}(\Omega)}\\
& \leq \int_0^h \| {\bf V^1}(u,\cdot)  \|_{L^{\infty}(\Omega)} \diff u +\int_0^h \|{\bf V^3}(u,\cdot) \|_{L^{\infty}(\Omega)} \diff u.
\end{split}
\end{equation}
Hence, using the representation $\rho^3_{\tau} = X^{3}(0,\tau,\cdot)^{\#}\, X^1(0,h,\cdot)^{\#} \, \mu_s$ as in~\eqref{eq:representation_measures_auxiliary}
$$
\int_{\Omega} |x|^p \diff \rho^3_{\tau} \leq 2^p \int_{\Omega} |x|^p \diff \mu_s + 2^p\left( \int_0^h \| {\bf V^1}(u,\cdot)  \|_{L^{\infty}(\Omega)} \diff u +\int_0^h \|{\bf V^3}(u,\cdot) \|_{L^{\infty}(\Omega)} \diff u\right)^p,
$$
where the last two integrals are finite by \ref{ass:regularity_vector_fields}. It follows that $ \sup_{\tau \in [0,h]} \rho^3_{\tau}(\Omega \setminus \Omega_n) \leq \frac{C}{n^p}$ independently of $h$ so that we obtain
\begin{equation}\label{eq:part_1_conv_integral_second_term_Wp}
\frac{1}{h} \int_0^h \int_{\Omega} (f(\tau,x)-f_0(x)) \diff \rho^3_{\tau}(x) \diff \tau \to 0 \mbox{ as } h \to 0^+.
\end{equation}
Next, we use $\rho^3_{\tau} = X^{3}(0,\tau,\cdot)^{\#}\, X^1(0,h,\cdot)^{\#} \, \mu_s$ to estimate \begin{equation}\label{eq:splitting_the_integral_In_Jn}
\begin{split}
&\left|\frac{1}{h} \int_0^h \int_{\Omega} f_0(x) \diff (\rho^3_{\tau}-\mu_s)(x) \diff \tau \right| \leq \\
& \qquad \qquad \leq  \sup_{\tau \in [0,h]} \left| \int_{\Omega} (f_0(X^3(0,\tau, X^1(0,h,x))) - f_0(x)) \diff \mu_{s}(x)  \right| =: I_n + J_n,
\end{split}
\end{equation}
where $I_n$ and $J_n$ correspond to splitting of the inner integral for sets $\Omega_n$ and $\Omega\setminus \Omega_n$ as above. Since $\mu_s \in \mathcal{P}_p(\Omega) \cap L^1(\Omega)$ and $f_0 \in L^{\infty}(\Omega)\cap C(\Omega)$ (by \ref{ass:C_admissibility}), $J_n \leq \frac{C}{n^p}$. Concerning $I_n$, from \eqref{eq:estimate_x3_x1_identity} we observe that
$$
\|X^3(0,\tau, X^1(0,h,x)) - x\|_{L^{\infty}(\Omega)} \leq 1
$$
for sufficiently small $h$ so that if $|x|\leq n$ then $|X^3(0,\tau, X^1(0,h,x))| \leq n+1$. Since $f_0 \in C(\Omega)$ by \ref{ass:C_admissibility}, it is uniformly continuous on $\{|x|\leq n+1\}$ and has a modulus of continuity $\omega_n$ on this set. Hence, we estimate $I_n$ by
$$
I_n \leq \omega_n\left(\int_0^h \| {\bf V^1}(u,\cdot)  \|_{L^{\infty}(\Omega)} \diff u +\int_0^h \|{\bf V^3}(u,\cdot) \|_{L^{\infty}(\Omega)} \diff u \right) \to 0 \mbox{ as } h\to 0^+.
$$
Consequently from \eqref{eq:splitting_the_integral_In_Jn}
$
\limsup_{h \to 0^+} \left|\frac{1}{h} \int_0^h \int_{\Omega} f_0(x) \diff (\rho^3_{\tau}-\mu_s)(x) \diff \tau \right| \leq \frac{C}{n^p}
$
and since $n$ is arbitrary, we obtain
\begin{equation}\label{eq:part_2_conv_integral_second_term_Wp}
\left|\frac{1}{h} \int_0^h \int_{\Omega} f_0(x) \diff (\rho^3_{\tau}-\mu_s)(x) \diff \tau \right| \to 0 \mbox{ as } h \to 0^+.
\end{equation}
Combining convergences \eqref{eq:part_1_conv_integral_second_term_Wp} and \eqref{eq:part_2_conv_integral_second_term_Wp} we arrive at \eqref{eq:claim_limit_as_hto0}. Then, combining \eqref{eq:formula_for_wass_bb_in_the_main_proof} and \eqref{eq:claim_limit_as_hto0}, we obtain
\begin{equation}\label{eq:main_part_curves_1_2}
\limsup_{h \to 0^+} \frac{\mathcal{W}_p(\rho^1_{h}, \rho^3_{h})}{h} \leq \left( \int_{\Omega} \left|\bvnu[\mu_s]- \bvmu[\mu_s]  \right|^p \diff \mu_s  \right)^{\frac{1}{p}}.
\end{equation}
\underline{Conclusion of the proof.} In view of \eqref{eq:triangle_equation_mu123}, \eqref{eq:main_part_curves_3_2} and \eqref{eq:main_part_curves_1_2}
$$
\liminf_{h \to 0^+} \frac{\mathcal{W}_p(\rho^1_{h}, \rho^2_{h})}{h} \leq \limsup_{h \to 0^+} \frac{\mathcal{W}_p(\rho^1_{h}, \rho^2_{h})}{h} \leq \left( \int_{\Omega} \left|\bvnu[\mu_s]- \bvmu[\mu_s]  \right|^p \diff \mu_s  \right)^{\frac{1}{p}}.
$$
Therefore, the assertion of the theorem follows directly from \eqref{eq:first_step_of_the_proof_application_bressan}.

\section{Proofs of Theorems \ref{thm:PME} and \ref{thm:PME_incompress_limit}}\label{sect:applications_to_PME}

To illustrate applicability of Theorems \ref{thm:simplified_for_2-Wasserstein} and \ref{thm:main}, we provide two independent proofs of Theorem \ref{thm:PME}.

\begin{proof}[Proof of Theorem \ref{thm:PME} (by using Theorem \ref{thm:simplified_for_2-Wasserstein})] {\phantom{...}} \\

\underline{Step 1: Application of Theorem \ref{thm:simplified_for_2-Wasserstein}.} We assume additionally that $\mu_0$ is compactly supported (this is an empty assumption if $\Omega$ is a bounded domain). We will prove 
\begin{equation}\label{eq:stability_PME_result_step1}
\mathcal{W}_2(\mu_t, \nu_t) \leq \sqrt{t}\left( \int_0^t  \int_{\Omega} \left| m  - n \, \mu_s^{n-m} \right|^2 |\nabla \mu_s|^2 \, \mu_s^{2m-3} \diff x  \diff s\right)^{\frac{1}{2}}.
\end{equation}
To this end, we define 
\begin{equation}\label{eq:functionals_F_G_for_PME}
\mathcal{F}[\mu] = \frac{1}{m-1} \int_{\Omega} \mu^m \diff x, \qquad \mathcal{G}[\nu] = \frac{1}{n-1} \int_{\Omega} \nu^n \diff x, 
\end{equation}
$$
\frac{\delta \mathcal{F}}{\delta \mu}[\mu]=\frac{m}{m-1}\mu^{m-1}, \qquad \frac{\delta \mathcal{G}}{\delta \nu}[\nu]= \frac{n}{n-1}\nu^{n-1},
$$
and we will verify Assumption \ref{eq:assumption_simplified_formula} with $\bvmu=-\nabla\frac{\delta \mathcal{F}}{\delta \mu}[\mu]$ and $\mathcal{G}$ as above.\\

We recall that since $\mu_0$ is compactly supported in this step, $\mu_t$ and $\nu_t$ stay compactly supported for all times by the finite speed of propagation of the porous medium equation \cite{MR2286292}. Moreover, it will be useful to note down the following identity satisfied by $\mu_t$ and all $k \geq 0$
\begin{equation}\label{eq:energy_estimate_general_k}
\int_{\Omega} \mu_t^{k+1} \diff x + k\,(k+1) \, m\, \int_0^t \int_{\Omega} \mu_s^{k+m-2}\, |\nabla \mu_s|^2 \diff x \diff s = \int_{\Omega} \mu^{k+1}_0 \diff x.
\end{equation}
This identity is obtained by multiplying \eqref{eq:PME_1} by $\mu_t^k$ and integrating by parts. In particular, since a similar identity is satisfied for $\nu_t$ and $\mu_0 \in L^{1}(\Omega)\cap L^{\infty}(\Omega)$, we deduce that $\mu_t$ and $\nu_t$ are in $L^p(\Omega)$ for all $p\in[1,\infty]$ and all $t\geq 0$.\\

We now verify Assumption \ref{eq:assumption_simplified_formula}. Regarding condition \ref{ass:regularity_solutions:add_sect_alternative_proof}, both functions $\nabla\frac{\delta \mathcal{F}}{\delta \mu}[\mu_t] \,\sqrt{\mu_t}$, $\nabla \frac{\delta \mathcal{G}}{\delta \nu}[\nu_t] \,\sqrt{\nu_t}$ belong to $L^2((0,T)\times\Omega)$ because of standard energy identities
$$
\partial_t \mathcal{F}[\mu_t] + \int_{\Omega} \left| \nabla \frac{\delta \mathcal{F}}{\delta \mu}[\mu_t] \,\sqrt{\mu_t} \right|^2 \diff x\leq 0, \quad \partial_t \mathcal{G}[\nu_t] + \int_{\Omega} \left| \nabla \frac{\delta \mathcal{G}}{\delta \nu}[\nu_t] \,\sqrt{\nu_t} \right|^2 \diff x\leq 0.
$$
Moreover, function $\nabla \frac{\delta \mathcal{G}}{\delta \nu}[\mu_t] \,\sqrt{\mu_t} = n\,\mu_t^{n-\frac{3}{2}}\,\nabla\mu_t$ is also in $L^2((0,T)\times\R)$ thanks to \eqref{eq:energy_estimate_general_k} with $k=2n-m-1$. Finally, $\mu_t, \nu_t \in \mathcal{D}(\mathcal{G})$ by the $L^p(\Omega)$ regularity mentioned above. Next, regarding \ref{eq:geodesic_conv_G_sect:alternative_proof}, it is well-known that $\mathcal{G}$ is 0-geodesically convex, see \cite[Theorem 5.15]{MR1964483} and \cite[Theorem 1.3]{MR2118836}.\\

\noindent Finally, we verify \ref{eq:derivative_via_1st_var_sect:alternative_proof}. We assume $\Omega\subseteq \Rd$ (the necessary small adaptions for $\Omega=R\,\Td$ are discussed below, in Step 4). We consider the geodesic $\{\gamma_s\}_{s\in[0,1]}$ given by $\gamma_s= T_s^{\#} \mu_t$ where $T_s(x) = (1-s)\,x+s\,T(x)$, $\gamma_1 = \nu_t$ and by \cite[Theorem 5.30]{MR1964483} we know
\begin{equation}\label{eq:slope_functional_at_geodesic_pure_PME}
\frac{\diff}{\diff s} \mathcal{G}[\gamma_s]\big|_{s=0} =- \int_{\Omega} \mu_t^{n}\, (\Delta_A \psi - d)\diff x,
\end{equation}
where $\psi$ is the convex function such that $T = \nabla \psi$ and $\Delta_A$ is the Laplace operator in the Aleksandrov sense (for a convex function $f$, $\Delta f$ is always a nonnegative, locally finite measure since it is a nonnegative distribution; we write $\Delta f = \Delta_A f + \Delta_s f$ where $\Delta_A f$, $\Delta_s f$ are the absolutely continuous and singular parts of $\Delta f$, respectively). Now, for any $g \in C^{\infty}_c(\Omega)$, $g \geq 0$
\begin{multline*}
\int_{\Omega} g^n\, (\Delta_A \psi - d)\diff x \leq \int_{\Omega} g^{n}\, (\Delta \psi - d)\diff x  = \\ =
- \int_{\Omega} \nabla g^n\, (\nabla \psi - x) \diff x = 
-\frac{n}{n-1} \int_{\Omega} g\, \nabla g^{n-1}\, (T(x) - x) \diff x. 
\end{multline*} 
Then, we consider a sequence $\{g_k\}_{k}$ of smooth functions such that $g_k \geq 0$, $g_k \to \mu_t$ a.e., the supports of $\{g_k\}_{k}$ are uniformly bounded, $\nabla g_k^{n-\frac{1}{2}} \to \nabla \mu_t^{n-\frac{1}{2}}$ strongly in $L^2(\Omega)$ (this uses that $\nabla \mu_t^{n-\frac{1}{2}} \in L^2(\Omega)$ for a.e. $t$). Since $g_k$ has uniformly bounded support and $|T(x)|$ is bounded since it is controlled by the support of $\nu_t$, $\sqrt{g_k}\,(T(x)-x)\to \sqrt{\mu_t}\,(T(x)-x)$ in $L^2(\Omega)$ so that
\begin{equation}\label{eq:convergence_integrals_to_prove_A3_pure_PME}
-\frac{n}{n-1} \int_{\Omega} g_k\, \nabla g_k^{n-1}\, (T(x) - x) \diff x \to -\frac{n}{n-1} \int_{\Omega} \mu_t\, \nabla \mu^{n-1}_t\, (T(x) - x) \diff x \mbox{ as } k \to \infty.
\end{equation}
By Fatou lemma
$$
\int_{\Omega} \mu_t^{n}\, (\Delta_A \psi - d)\diff x \leq \liminf_{k \to \infty} \int_{\Omega} g^n_k\, (\Delta_A \psi - d)\diff x \leq -\frac{n}{n-1} \int_{\Omega} \mu_t\, \nabla \mu_t^{n-1}\, (T(x) - x) \diff x.
$$
It follows from \eqref{eq:slope_functional_at_geodesic_pure_PME} that
\begin{equation}\label{eq:bound_slope_PME_s0_real_case}
\frac{\diff}{\diff s} \mathcal{G}[\gamma_s]\big|_{s=0} \geq \int_{\Omega} \nabla \frac{\delta \mathcal{G}}{\delta \nu}[\mu_t] \,(T(x)-x)\diff \mu_t(x)
\end{equation}
and we can conclude by noting that $T(x)-x =-\nabla \varphi(x)$. Now, to obtain the second inequality \eqref{eq:slope_G_geodesic_general_assumption_2} for $\frac{\diff}{\diff s} \mathcal{G}[\gamma_s]\big|_{s=1}$ we consider the geodesic $\{\eta_s\}$ given by $\eta_s = S_s^{\#}\nu_t$, $S_s(x) =s\, S(x) + (1-s)\,x$, where $S$ is the optimal transport of $\nu_t$ onto $\mu_t$. Note that $S_s(T(x)) = T_{1-s}(x)$. Therefore, $\eta_s = \gamma_{1-s}$. By the reasoning above (this requires $\nabla \nu_t^{n-\frac{1}{2}} \in L^2(\Omega)$ for a.e. $t$ which is true by the bound on $\nabla \frac{\delta \mathcal{G}}{\delta \nu}[\nu_t] \,\sqrt{\nu_t}$ from \ref{ass:regularity_solutions:add_sect_alternative_proof})
\begin{equation}\label{eq:proof_the_slope_at_s1_by_inversion_pure_PME}
-\frac{\diff}{\diff s} \mathcal{G}[\gamma_s]\big|_{s=1}  = \frac{\diff}{\diff s} \mathcal{G}[\eta_s]\big|_{s=0} \geq \int_{\Omega} \nabla \frac{\delta \mathcal{G}}{\delta \nu}[\nu_t] \,(S(x)-x)\diff \nu_t(x).
\end{equation}
We conclude since $S(x)-x=-\nabla\phi(x)$. Hence, we may apply Theorem \ref{thm:simplified_for_2-Wasserstein} and H{\"o}lder's inequality in time to arrive at \eqref{eq:stability_PME_result_step1}.
 \\

\underline{Step 2. Proof of \eqref{eq:continuity_wrt_exponent_W2_PME_main_thm} for compactly supported initial conditions.} Starting from \eqref{eq:stability_PME_result_step1}, by triangle inequality we obtain
\begin{align*}
\mathcal{W}_2(\mu_t, \nu_t) \leq &\sqrt{t} \left(\int_0^t  \int_{\Omega} \left| m  - m  \, \mu_s^{n-m} \right|^2 |\nabla \mu_s|^2 \, \mu_s^{2m-3} \diff x  \diff s\right)^{\frac{1}{2}} \\
 &+  \sqrt{t} \left( \int_0^t  \int_{\Omega} \left| m-n \right|^2 |\nabla \mu_s|^2 \, \mu_s^{2n-3} \diff x  \diff s \right)^{\frac{1}{2}} =: X+Y.
\end{align*}
The term $Y$ can be estimated using \eqref{eq:energy_estimate_general_k} with $k = 2n - m -1$
$$
Y \leq  \frac{|m-n|\,\sqrt{t}}{\sqrt{m\,(2n-m-1)\,(2n-m)}} \left( \int_{\Omega} \mu_0^{2n-m} \diff x \right)^{\frac{1}{2}}.
$$
Concerning the term $X$, we have for all $x \geq 0$ and $\theta \in \R$ 
$$
|1- x^{\theta}| =|1-e^{\theta \, \log x}| \leq e^{\max(0, \theta \, \log x)}\, |\theta \, \log x| \leq |\theta| \, (1+x^{\theta})\, |\log x|.
$$
We want to use this estimate with $\theta = n-m$ and $x = \mu_s$. The sum $1+x^{\theta}$ leads to two terms
\begin{equation}\label{eq:definition_term_X1_in_the_estimate_PME}
X_1 = \sqrt{t}\,m\,|m-n|\, \left( \int_0^t  \int_{\Omega} |\nabla \mu_s|^2 \, \mu_s^{2m-3} \, |\log \mu_s|^2\diff x  \diff s\right)^{\frac{1}{2}},
\end{equation}
$$
X_2 = \sqrt{t}\,m\,|m-n|\, \left( \int_0^t  \int_{\Omega} |\nabla \mu_s|^2 \, \mu_s^{2n-3} \, |\log \mu_s|^2 \diff x  \diff s\right)^{\frac{1}{2}}.
$$
Let us focus on $X_1$. We introduce a parameter $\alpha \in [0, m-1)$ so that $\mu_s^{2m-3} \, |\log \mu_s|^2 \leq \mu_s^{2m-3-\alpha}\, \| \mu_s^{\alpha}\,|\log \mu_s|^2 \|_{L^{\infty}(\Omega)}$. Using \eqref{eq:energy_estimate_general_k} with $k = m-1-\alpha$, we deduce
$$
X_1 \leq \frac{m\,|m-n|\, \sqrt{t}}{\sqrt{m\,(m-1-\alpha)\,(m-\alpha)}} \, \| \mu_s^{\frac{\alpha}{2}}\,|\log \mu_s| \|_{L^{\infty}(\Omega)}\, \left(\int_{\Omega} \mu_0^{m-\alpha} \diff x \right)^{\frac{1}{2}}.
$$
Similarly, for $X_2$ we introduce a parameter $\beta \in [0, 2n-m-1)$ and we use \eqref{eq:energy_estimate_general_k} with $k=2n-m-1-\beta$ to obtain
$$
X_2 \leq \frac{m\,|m-n|\,\sqrt{t}}{\sqrt{m\,(2n-m-1-\beta)\,(2n-m-\beta)}} \,  \| \mu_s^{\frac{\beta}{2}}\,|\log \mu_s| \|_{L^{\infty}(\Omega)}\, \left(\int_{\Omega} \mu_0^{2n-m-\beta} \diff x \right)^{\frac{1}{2}}.
$$
It remains to estimate the terms $\| \mu_s^{\kappa}\,|\log \mu_s| \|_{L^{\infty}(\Omega)}$ for $\kappa = \frac{\alpha}{2}, \frac{\beta}{2}$ in terms of $\mu_0$. By the maximum principle, $\inf_{x\in \Omega} \mu_0 \leq \mu_s \leq \sup_{x\in\Omega} \mu_0$, and hence $\|\log \mu_s\|_{L^{\infty}(\Omega)} \leq \|\log \mu_0\|_{L^{\infty}(\Omega)}$. Moreover, the minimum of the function $x\mapsto x^{\kappa} \log x$ is $-\frac{1}{e\,\kappa}$, so that $|x^{\kappa} \log x|\leq \frac{1}{e\,\kappa} + x^{\kappa+1}$. Therefore, we can estimate $\| \mu_s^{\kappa}\,|\log \mu_s| \|_{\infty}$ in two different ways, depedning on whether $\log \mu_0 \in L^{\infty}(\Omega)$ or not:
\begin{equation}\label{eq:how_to_handle_log_PME}
\| \mu_s^{\kappa}\,|\log \mu_s| \|_{\infty} \leq \min\left(\frac{1}{e\,\kappa} + \|\mu_0\|_{L^{\infty}(\Omega)}^{\kappa+1},  \|\mu_0\|_{L^{\infty}(\Omega)}^{\kappa}\,\|\log \mu_0\|_{L^{\infty}(\Omega)} \right).
\end{equation}
Finally, note that $\log \mu_0 \notin L^{\infty}(\Omega)$ if $\Omega$ is unbounded since $\mu_0 \in L^1(\Omega)$ cannot be bounded away from zero. Hence, the second bound is finite only on bounded domains and when $\mu_0$ is bounded away from zero.\\

\underline{Step 3. Proof of \eqref{eq:continuity_wrt_exponent_W2_PME_main_thm} for general initial conditions.} 
This step is only necessary for $\Omega=\Rd$. Let $\mu_0 \in \mathcal{P}_2(\Rd) \cap L^{\infty}(\Rd)$. We consider the sequence $\{\mu_0^R\}_{R\geq 1}$ defined by $$
\mu_0^R =\frac{1}{\int_{B_R} \mu_0(x)\diff x}\, \mu_0 \, \mathds{1}_{B_R}. 
$$
We have $\int_{\Rd} |x|^2 \mu_0^R \diff x \to \int_{\Rd} |x|^2 \mu_0 \diff x$ (by dominated convergence), $\mu_0^R \to \mu_0$ in $\mathcal{W}_2$ and $L^p(\Rd)$ for all $p\in[1,\infty)$ and $\limsup_{R\to\infty}\|\mu_0^R \|_{L^{\infty}(\Rd)} \leq \|\mu_0 \|_{L^{\infty}(\Rd)}$. Denoting by $\mu_t^R$, $\nu_t^R$ the solutions to \eqref{eq:PME_1}--\eqref{eq:PME_2} with the same initial condition $\mu_0^R$, we have $\mathcal{W}_2(\mu^{R}_t, \nu^{R}_t) \leq C_{R}\,\sqrt{t}\, |m-n|$, where $C_R$ is defined by \eqref{eq:constant_continuity_wrt_exponent_W2_PME_main_thm} with $\mu_0$ replaced by $\mu_0^R$. The geodesic convexity implies again that $\mathcal{W}_2(\mu_t^R, \mu_t), \mathcal{W}_2(\nu_t^R, \nu_t) \to 0$ as $R\to \infty$ so $\mathcal{W}_2(\mu^{R}_t, \nu^{R}_t) \to \mathcal{W}_2(\mu_t, \nu_t)$. On the other hand, the constant $C_{R}$ converges by the $L^p(\Rd)$ convergence of $\mu_0^R$. Note that we do not need to worry about the $\|\log \mu_0^R\|_{L^{\infty}(\Rd)}$ term since it has to blow up anyway.\\

\underline{Step 4. Necessary adaptations for the case $\Omega=R\,\Td$.} We discuss here how to prove \ref{eq:derivative_via_1st_var_sect:alternative_proof} for $\Omega=R\,\Td$. We refer to Appendix \ref{rem:optimal_transport_periodic_domain} for the necessary background on the optimal transport on $R\,\Td$. Given $\mu_t, \nu_t \in \mathcal{P}_2(\Omega)\cap L^1(\Omega)$, the geodesic $\{\gamma_s\}_{s\in[0,1]}$ connecting $\mu_t$ and $\nu_t$ is defined as $\gamma_s = T_{s,P}^{\#}\,\mu_t$ with $T_{s,P}(x) = (x-s\,\nabla\varphi(x)) \mbox{ mod } 2R$, where the modulo operation is applied on each coordinate, see \eqref{eq:geodesic_torus}. The map $\nabla \varphi$ is the Kantorovich potential for the optimal transportation problem on $R\,\Td$ and it satisfies $\nabla \varphi(x) = x - T(x)$, where $T$ is the optimal transport between periodically extended measures $\mu_t$ and $\nu_t$, see \eqref{eq:Kantorovich_potential_periodic_domain}. We now consider $\widetilde{\mu}_t$ as a measure $\mu_t$ on $(-R,R]^d \subset \Rd$ and define the curve $\widetilde{\gamma}_s= T_s^{\#}\widetilde{\mu}_t$ where $T_s(x) = x-s\,\nabla\varphi(x)$. By \eqref{eq:bound_displacement_periodic_domain}, $\widetilde{\gamma}_s$ is supported at most in $(-2R,2R]^d$. By Lemma~\ref{lem:Lp_norm_of_geodesic_from_torus_to_Rd}
$$
\frac{1}{n-1} \int_{\Omega} |\gamma_s|^n \diff x = \frac{1}{n-1} \int_{(-2R,2R]^d} |\widetilde{\gamma}_s|^n \diff x.
$$
Note that $\{\widetilde{\gamma}_s\}_{s\in[0,1]}$ is a geodesic as in the classical theory on $\Rd$. Hence, we can compute as in \eqref{eq:bound_slope_PME_s0_real_case} above
$$
\frac{\diff}{\diff s} \mathcal{G}[\gamma_s]\big|_{s=0} = \frac{\diff}{\diff s} \mathcal{G}[\widetilde{\gamma}_s]\big|_{s=0} \geq \int_{\Omega} \nabla \frac{\delta \mathcal{G}}{\delta \nu}[\mu_t] \,(T(x)-x)\diff \mu_t(x) = 
-\int_{\Omega} \nabla \frac{\delta \mathcal{G}}{\delta \nu}[\mu_t] \,\nabla\varphi(x)\diff \mu_t(x). 
$$
For $\frac{\diff}{\diff s} \mathcal{G}[\gamma_s]\big|_{s=1}$, we can write $\frac{\diff}{\diff s} \mathcal{G}[\gamma_s]\big|_{s=1} = -\frac{\diff}{\diff s} \mathcal{G}[{\gamma}_{1-s}]\big|_{s=0}$. By Remark \ref{rem:inverse_geodesic_torus_from_mu1_to_mu0}, $\{\gamma_{1-s}\}_{s\in[0,1]}$ is the geodesic from $\nu_t$ to $\mu_t$ so we can use the reasoning above to compute
$$
\frac{\diff}{\diff s} \mathcal{G}[\gamma_s]\big|_{s=1} =  -\frac{\diff}{\diff s} \mathcal{G}[{\gamma}_{1-s}]\big|_{s=0} \leq \int_{\Omega} \nabla \frac{\delta \mathcal{G}}{\delta \nu}[\nu_t] \,\nabla\phi(x)\diff \nu_t(x),
$$
where $\phi$ is the Kantorovich potential in the optimal transportation problem from $\nu_t$ onto $\mu_t$. This concludes the proof of \ref{eq:derivative_via_1st_var_sect:alternative_proof}.
\end{proof}

\begin{proof}[Proof of Theorem \ref{thm:PME} (by using Theorem \ref{thm:main})] 
We first assume that $\Omega$ is a periodic or a bounded domain. We apply Theorem \ref{thm:main} with 
$$
\mathcal{A}:= \mathcal{A}_1 = \mathcal{A}_2 = \{\rho \in C^{\infty}(\Omega)\cap \mathcal{P}_2(\Omega) \mbox{ such that } \rho>0 \mbox{ on } \overline{\Omega} \mbox{ and } \rho \mbox{ satisfies \eqref{eq:Neuman_boundary_conditions_PME}}\}
$$ and with velocity fields
\begin{equation}\label{eq:veloctiy_fields_applications_second_theorem_PME_torus}
\bvmu[\mu] = -\frac{m}{m-1} \, \nabla \mu^{m-1}, \qquad \qquad \bvnu[\nu] = -\frac{n}{n-1} \, \nabla \nu^{n-1}.
\end{equation}
If $\mu_0 \in \mathcal{A}$, then $\mu_t, \nu_t \in \mathcal{A}$ so $\mu_t$, $\nu_t$ are smooth. Therefore, Assumption \ref{ass:BIG_regularity_of_sln_and_vf} and \ref{ass:abs_cont_first_map} are immediately satisfied. Regarding Assumption \ref{ass:semigroup_second_map}, it is known that the functional $\mathcal{G}$ (defined in \eqref{eq:functionals_F_G_for_PME}) is 0-geodesically convex by \cite[Theorem 5.15]{MR1964483} (for $\Rd$ and its subsets) and \cite[Theorem 1.3]{MR2118836} (for $R\,\Td$). Therefore, \eqref{eq:Wasserstein_Lipschitz-AC} is satisfied with $K(t) = 1$. Theorem \ref{thm:main} implies that
$$
\mathcal{W}_2(\mu_t, \nu_t) \leq  \int_0^t \left( \int_{\Omega} \left|\bvnu[\mu_s]- \bvmu[\mu_s]  \right|^2 \diff \mu_s  \right)^{1/2} \diff s \leq \sqrt{t}  \left( \int_0^t \int_{\Omega} \left|\bvnu[\mu_s]- \bvmu[\mu_s]  \right|^2 \diff \mu_s \diff s  \right)^{1/2}
$$
whenever $\mu_0 \in \mathcal{A}$. Plugging the velocities from \eqref{eq:veloctiy_fields_applications_second_theorem_PME_torus} yields \eqref{eq:stability_PME_result_step1}, i.e. the conclusion of Step~1 above. Moreover, Step 2 carries over verbatim as the computations are purely algebraic so that we obtain the desired estimate \eqref{eq:continuity_wrt_exponent_W2_PME_main_thm} valid for all initial conditions $\mu_0 \in \mathcal{A}$ and for $\Omega$ either bounded or periodic. It remains to remove this restriction and extend the estimate to the case of $\Rd$.\\

To this end, given $\mu_0 \in L^{\infty}(\Omega)\cap \mathcal{P}_2(\Omega)$, letting $\Omega_{\eps} = \{x\in \Omega: \mbox{dist}(x,\partial {\Omega})>2\eps\}$, we define the sequence
$$
\mu_0^{\eps} = N_{\eps}\,(1-\eps)\, \Big( \big(\big(\mu_0 - \essinf_{y\in \Omega} \mu_0(y)\big)\mathds{1}_{\Omega_{\eps}}\big)\ast\psi_{\eps} + \essinf_{y\in \Omega} \mu_0(y)  \Big) + \frac{\eps}{|\Omega|},
$$
where $\{\psi_{\eps}\}_{\eps\in(0,1)}$ is a standard mollifier supported in the ball $B_{\eps}$ and the normalization constant
$$
N_{\eps} := \frac{1}{\essinf_{y\in \Omega} \mu_0(y) \, (|\Omega| - |\Omega_{\eps}|) + \int_{\Omega_{\eps}}  \mu_0(x)\diff x} \to 1 \mbox{ as } \eps \to 0
$$
ensures that $\mu_0^{\eps}$ has mass 1. Moreover, $\mu_0^{\eps}$ is constant close to the boundary $\partial \Omega$ so it satisfies boundary conditions \eqref{eq:Neuman_boundary_conditions_PME} and so, $\mu_0^{\eps} \in \mathcal{A}$. The definition of $\mu_0^{\eps}$ is rather involved, as it must be smooth, constant near the boundary, bounded from below by both a positive constant and the essential infimum of $\mu_0$, and normalized to be a probability measure.\\

We write $\mu_t^{\eps}$, $\nu_t^{\eps}$ for solutions to \eqref{eq:PME_1}--\eqref{eq:PME_2} with initial condition $\mu_0^{\eps}$. By \eqref{eq:continuity_wrt_exponent_W2_PME_main_thm}, we have $\mathcal{W}_2(\mu_t^{\eps}, \nu_t^{\eps}) \leq C_{\eps}\,\sqrt{t}\,|m-n|$, where $C_{\eps}$ is defined by \eqref{eq:constant_continuity_wrt_exponent_W2_PME_main_thm} with $\mu_0$ replaced by $\mu_0^\eps$. The geodesic convexity of $\mathcal{F}$ and $\mathcal{G}$ in \eqref{eq:functionals_F_G_for_PME} implies that $\mathcal{W}_2(\mu_t^{\eps}, \mu_t), \mathcal{W}_2(\nu_t^{\eps}, \nu_t) \leq \mathcal{W}_2(\mu_0^{\eps}, \mu_0) \to 0$ which implies $\mathcal{W}_2(\mu_t^{\eps}, \nu_t^{\eps}) \to \mathcal{W}_2(\mu_t, \nu_t)$. Moreover, $\mu_0^{\eps} \to \mu_0$ in $L^p(\Omega)$ for all $p \in [1,\infty)$ so to prove convergence of the constant $C_{\eps}$ we only need to discuss the terms $ \|\mu_0^{\eps}\|_{L^{\infty}(\Omega)}$, $\|\log \mu_0^{\eps}\|_{L^{\infty}(\Omega)}$. Note that for all $x\in \Omega$
$$
N_{\eps}\,(1-\eps)\,\essinf_{y\in \Omega} \mu_0(y)  \leq \mu_0^{\eps}(x) \leq N_{\eps}\,(1-\eps)\, \|\mu_0\|_{L^{\infty}(\Omega)} + \frac{\eps}{|\Omega|},
$$ 
which implies $\limsup_{\eps \to 0} \|\log \mu_0^{\eps}\|_{L^{\infty}(\Omega)} \leq   \|\log \mu_0\|_{L^{\infty}(\Omega)}$, $\limsup_{\eps \to 0} \|\mu_0^{\eps}\|_{L^{\infty}(\Omega)} \leq   \|\mu_0\|_{L^{\infty}(\Omega)}$. It follows that $\limsup_{\eps\to0} C_{\eps}\leq C$, where $C$ is defined by \eqref{eq:constant_continuity_wrt_exponent_W2_PME_main_thm}. Hence, \eqref{eq:continuity_wrt_exponent_W2_PME_main_thm} holds for $\Omega$ bounded or periodic with a general initial condition.\\

Finally, we extend the estimate to the case $\Omega=\Rd$. First, we assume that the support of $\mu_0=\nu_0$ is a bounded set in $\Rd$. By the finite speed of propagation of \eqref{eq:PME_1}--\eqref{eq:PME_2}, $\mu_t$ and $\nu_t$ are supported in some ball $B_R$ for all $t\in [0,T]$. It follows that $\mu_t, \nu_t$ solve \eqref{eq:PME_1}--\eqref{eq:PME_2} in the ball $B_{2R}$ with Neumann boundary conditions and we can apply the reasoning above to prove the estimate. The general case $\mu_0 \in \mathcal{P}_2(\Rd) \cap L^{\infty}(\Rd)$ follows by an approximation as in Step 3 in the proof above.
\end{proof}

\begin{rem}\label{rem:extending_below_1_the_method_for_PME}
It is natural to ask to what extent these results can be extended to the case $m,n \leq 1$ since it is well-known that the geodesic convexity of the functional $\mathcal{G}[\nu] = \frac{1}{n-1}\int_{\Omega} |\nu|^n \diff x$ holds for $n \geq 1-\frac{1}{d}$ \cite[Examples~5.19]{MR1964483}. In both proofs above, the assumption $m,n > 1$ is mainly used to ensure that arbitrary solutions can be approximated by compactly supported ones. More precisely, in the first proof this property is used to establish \ref{eq:derivative_via_1st_var_sect:alternative_proof} (specifically, to prove \eqref{eq:convergence_integrals_to_prove_A3_pure_PME}) while in the second proof this is used to verify that Assumption \ref{ass:BIG_regularity_of_sln_and_vf} holds. It seems unlikely that the latter can be established for globally supported solutions, since the required regularity estimates usually blow up for large arguments. It is therefore natural to focus on proving \ref{eq:derivative_via_1st_var_sect:alternative_proof} in this setting. Another additional step would be to extend the estimate \eqref{eq:energy_estimate_general_k} to negative values of $k$ (this is required to control for instance the term $X_1$ in \eqref{eq:definition_term_X1_in_the_estimate_PME}, since its estimate relies on \eqref{eq:energy_estimate_general_k} with $k=m-1-\alpha$ for some $\alpha>0$). Such an extension is possible at the cost of estimating the quantity $\int_{\Omega} \mu_t^{k+1} \diff x$ which, for $k>-1$ not too negative, can be controlled in terms of the mass and the second moment (see e.g. \cite[Theorem~2]{lutwak2005crame}).
\end{rem}

\begin{proof}[Proof of Theorem \ref{thm:PME_incompress_limit}]
Since we work with the same PDEs \eqref{eq:PME_1}--\eqref{eq:PME_2}, the estimate \eqref{eq:stability_PME_result_step1} from Step 1 of the proof of Theorem \ref{thm:PME} is satisfied for solutions $\mu_t$, $\nu_t$ solving \eqref{eq:PME_1}--\eqref{eq:PME_2} with $\mu_0$ compactly supported. Hence,
$$
\mathcal{W}_2(\mu_t, \nu_t) \leq 
\sqrt{t}\, m  \left(\int_0^t \int_{\Omega}  |\nabla \mu_s|^2 \, \mu_s^{2m-3} \diff x \diff s \right)^{\frac{1}{2}} + \sqrt{t}\, n  \left( \int_0^t \int_{\Omega}  |\nabla \mu_s|^2 \, \mu_s^{2n-3} \diff x \diff s \right)^{\frac{1}{2}}.
$$
Applying \eqref{eq:energy_estimate_general_k} with $k = m-1$ (which requires $m \geq 1$) and $k = 2n-m-1$ (which requires $n \geq 1$, $n - m \geq 0$) we get
\begin{equation}\label{eq:estimate_large_m_n}
\begin{split}
\mathcal{W}_2(\mu_t, \nu_t) \leq  \, &\frac{\sqrt{t}\,m}{\sqrt{m\,(m-1)\,m}} \left(\int_{\Omega} \mu_0^{m} \diff x \right)^{1/2} \\
&+ \frac{\sqrt{t}\,n}{\sqrt{m\,(2n-m-1)\,(2n-m)}} \left(\int_{\Omega} \mu_0^{2n-m} \diff x\right)^{1/2}.
\end{split}
\end{equation}
We can extend this estimate for all initial conditions as in Step 3 in the first proof of Theorem~\ref{thm:PME}. We now write $\mu_{m,t}$ for the solution to \eqref{eq:PME_1} with exponent $m$ and initial condition $\mu_{m,0}$ (note that so far the initial condition was independent of the exponent). We also let $\widetilde{\mu}_{m,n,t}$ for the solution to \eqref{eq:PME_1} with exponent $m$ but with initial condition $\mu_{n,0}$. We apply \eqref{eq:estimate_large_m_n} with $\mu_t = \widetilde{\mu}_{m,n,t}$ and $\nu_t = \mu_{n,t}$ to obtain for $m<n$
\begin{multline*}
\mathcal{W}_2(\mu_{n,t} , \widetilde{\mu}_{m,n,t} ) \leq \\ \leq \frac{\sqrt{t}}{\sqrt{m-1}} \left(\int_{\Omega} \mu_{n,0}^{m} \diff x \right)^{1/2} + \frac{\sqrt{t}\,n}{\sqrt{m\,(2n -m-1)\,(2n-m)}} \left(\int_{\Omega} {\mu_{n,0}^{2n-m}} \diff x\right)^{1/2}.
\end{multline*}
By the geodesic convexity, $\mathcal{W}_2(\widetilde{\mu}_{m,n,t}, \mu_{m,t}) \leq \mathcal{W}_2(\mu_{n,0}, \mu_{m,0})$ so that
\begin{equation}\label{eq:estimate_to_say_it_is_cauchy_seq_incompressible}
\begin{split}
\mathcal{W}_2(\mu_{n,t} , {\mu}_{m,t} ) \leq  \,&\mathcal{W}_2(\mu_{n,0}, \mu_{m,0})
 + \frac{\sqrt{t}}{\sqrt{m-1}} \left(\int_{\Omega} \mu_{n,0}^{m} \diff x \right)^{1/2} \\ 
 &+ \frac{\sqrt{t}\, n}{\sqrt{m\,(2n-m-1)\,(2n-m)}} \left(\int_{\Omega} {\mu_{n,0}^{2n-m}} \diff x\right)^{1/2}.
\end{split}
\end{equation}
By interpolation we easily establish
\begin{equation}\label{eq:integrals_initial_data_large_exponents}
\int_{\Omega} \mu_{n,0}^{m} \diff x \leq \left(\int_{\Omega} \mu_{n,0}^{2n} \diff x \right)^{\frac{m-1}{2n-1}}, \quad \int_{\Omega} \mu_{n,0}^{2n-m} \diff x \leq   \left(\int_{\Omega} \mu_{n,0}^{2n} \diff x \right)^{\frac{2n-m-1}{2n-1}},
\end{equation}
so that assumption \eqref{ass:bound_integral_incompressible_limit} imply that both integrals stay bounded in the limit $m, n \to \infty$ (with $m < n$). Assuming additionaly $2<m<n$, we get that the term appearing in the denominator of the last quantity in \eqref{eq:estimate_to_say_it_is_cauchy_seq_incompressible} satisfies
$$
(2n-m-1)\,(2n-m) = n^2\, \Big(2-\frac{m+1}{n}\Big)\,\Big(2-\frac{m}{n}\Big)> \frac{n^2}{2}
$$ so that \eqref{eq:estimate_to_say_it_is_cauchy_seq_incompressible} and the assumption on initial conditions \eqref{ass:bound_integral_incompressible_limit} imply that $\{\mu_{m,t}\}_m$ is a Cauchy sequence in $(\mathcal{P}_2(\Omega), \mathcal{W}_2)$ and so, it has a limit $\mu_{\infty,t}$ \cite[Prop.~7.1.5]{MR2129498}. Plugging the estimates \eqref{eq:integrals_initial_data_large_exponents} into \eqref{eq:estimate_to_say_it_is_cauchy_seq_incompressible} and using \eqref{ass:bound_integral_incompressible_limit}, we can pass to the limit $n \to \infty$ in \eqref{eq:estimate_to_say_it_is_cauchy_seq_incompressible} to obtain the desired estimate
$$
\mathcal{W}_2(\mu_{m,t}, \mu_{\infty,t}) \leq \mathcal{W}_2(\mu_{m,0}, \mu_{\infty,0}) + \frac{\sqrt{t}}{\sqrt{m-1}} + \frac{\sqrt{2 \, t}}{\sqrt{m}} \, \mathcal{C}^{1/2}.
$$ 
We remark that passing to the limit $n \to \infty$ in the integral terms \eqref{eq:integrals_initial_data_large_exponents} was justified by \eqref{ass:bound_integral_incompressible_limit}: for each $\eps>0$, there is $N$ such that for all $n \geq N$ we have $\int_{\Omega} \mu_{n,0}^{2n} \diff x \leq \mathcal{C} +\eps$. For such $n$
$$
\limsup_{n \to \infty} \left(\int_{\Omega} \mu_{n,0}^{2n} \diff x \right)^{\frac{2n-m-1}{2n-m}} \leq 
\limsup_{n \to \infty} \, (\mathcal{C} +\eps)^{\frac{2n-m-1}{2n-m}} = \mathcal{C} +\eps
$$
and we conclude by the arbitrariness of $\eps$. Similar argument works for the other intergral.
\end{proof}

\section{Proof of Theorem \ref{thm:rate_of_conv_nonlocal_to_local}}\label{sect:applications_to_nonlocal2local}

\subsection{Proof of Theorem \ref{thm:rate_of_conv_nonlocal_to_local} (the case of $\R$)}

The proof is divided into two steps.\\

\underline{Step 1: Application of Theorem \ref{thm:simplified_for_2-Wasserstein}.} We will prove that
\begin{equation}\label{eq:main_proof_rate_result_step_2}
\mathcal{W}_2(\mu^{\eps}_{t}, \mu_t) \leq \sqrt{t}\, \left(\int_0^t \int_{\R} |\partial_x \mu_s \ast \omega_{\eps} - \partial_x \mu_s |^2 \mu_s \diff x \diff s \right)^{\frac{1}{2}}.
\end{equation}
To this end, we define two functionals
$$
\mathcal{F}[\mu] = \frac{1}{2} \int_{\R} \mu^2 \diff x, \qquad \mathcal{G}[\nu] = \frac{1}{2} \int_{\R} \nu \, \nu\ast\omega_{\eps} \diff x.
$$
Their first variations read
$$
\frac{\delta\mathcal{F}}{\delta \mu}[\mu] = \mu, \qquad \frac{\delta\mathcal{G}}{\delta \nu}[\nu] = \nu\ast\omega_{\eps}.
$$
We want to apply Theorem \ref{thm:simplified_for_2-Wasserstein} with the velocity field $\bvmu[\mu] = -\nabla\frac{\delta \mathcal{F}}{\delta \mu}[\mu]$ and with the functional $\mathcal{G}$. We proceed to verifying Assumption \ref{eq:assumption_simplified_formula}. Regarding condition \ref{ass:regularity_solutions:add_sect_alternative_proof}, functions $\partial_x\frac{\delta \mathcal{F}}{\delta \mu}[\mu_t] \,\sqrt{\mu_t}$, $\partial_x\frac{\delta \mathcal{G}}{\delta \nu}[\nu_t] \,\sqrt{\nu_t}$ belong to $L^2((0,T)\times\R)$ because of standard energy identities
\begin{equation}\label{eq:energy_dissipation_inequality_nonlocal_to_local_onedim}
\partial_t \mathcal{F}[\mu_t] + \int_{\R} \left| \partial_x\frac{\delta \mathcal{F}}{\delta \mu}[\mu_t] \,\sqrt{\mu_t} \right|^2 \diff x\leq 0, \quad \partial_t \mathcal{G}[\nu_t] + \int_{\R} \left| \partial_x\frac{\delta \mathcal{G}}{\delta \nu}[\nu_t] \,\sqrt{\nu_t} \right|^2 \diff x\leq 0.
\end{equation}
Moreover, function $\partial_x\frac{\delta \mathcal{G}}{\delta \nu}[\mu_t] \,\sqrt{\mu_t}$ is also in $L^2((0,T)\times\R)$ because $\partial_x \omega_{\eps} \in L^{\infty}(\R)$ so we can estimate
$$
\left\| \partial_x \mu_t\ast\omega_{\eps} \,\sqrt{\mu_t} \right\|_{L^2((0,T)\times\R)} \leq \sqrt{T} \, \|\partial_x \omega_{\eps}\|_{L^{\infty}(\R)}
$$
by conservation of mass for \eqref{eq:PDE_local_1D_eps}. Next, $\mu_t, \nu_t \in \mathcal{D}(\mathcal{G})$ because $ L^1(\Omega)\subset \mathcal{D}(\mathcal{G})$ since $\omega_{\eps}~\in~L^{\infty}(\Omega)$ and both solutions $\mu_t, \nu_t$ are clearly in $L^1(\Omega)$. Finally, $\mu_t, \nu_t \in \mathcal{P}_2(\R)$ thanks to the identities obtained by multiplying the respective PDEs by $|x|^2$
$$
\partial_t \int_{\R} \mu_t |x|^2 \diff x \leq 2\, \left( \int_{\R} \left| \partial_x\frac{\delta \mathcal{F}}{\delta \mu}[\mu_t] \,\sqrt{\mu_t} \right|^2 \diff x \right)^{\frac{1}{2}}\, \left(\int_{\R} \mu_t |x|^2 \diff x \right)^{\frac{1}{2}},
$$
$$
\partial_t \int_{\R} \nu_t |x|^2 \diff x \leq 2\, \left( \int_{\R} \left| \partial_x\frac{\delta \mathcal{G}}{\delta \nu}[\nu_t] \,\sqrt{\nu_t} \right|^2 \diff x \right)^{\frac{1}{2}}\, \left(\int_{\R} \nu_t |x|^2 \diff x \right)^{\frac{1}{2}},
$$
and \eqref{eq:energy_dissipation_inequality_nonlocal_to_local_onedim}. Regarding \ref{eq:geodesic_conv_G_sect:alternative_proof}, it is well-known that the convexity condition \ref{ass:kernel_convexity} on the kernel implies 0-geodesic convexity of $\mathcal{G}$, see \cite[Proposition~2.7]{MR2935390}. Finally, to verify \ref{eq:derivative_via_1st_var_sect:alternative_proof}, we take the geodesic $\{\gamma_s\}_{s\in[0,1]}$ given by $\gamma_s= T_s^{\#} \mu_t$ where $T_s(x) = (1-s)\,x+s\,T(x)$, $\gamma_1 = \nu_t$ and we compute
\begin{equation}\label{eq:computing_the_slope_n2l_case_R}
\frac{\mathcal{G}[\gamma_s] -\mathcal{G}[\gamma_0]}{s} = \frac{1}{2} \int_{\R}\int_{\R} \frac{\omega_\eps(T_s(x)-T_s(y)) - \omega_{\eps}(x-y)}{s} \gamma_0(x) \gamma_0(y) \diff x \diff y.
\end{equation}
Since $\partial_x \omega_{\eps} \in L^{\infty}(\R)$, $\omega_{\eps}$ is Lipschitz continuous and it is a.e. differentiable. Therefore, the integrand converges for a.e. $x, y$. Since we can estimate
$$
\left|\frac{\omega_\eps(T_s(x)-T_s(y)) - \omega_{\eps}(x-y)}{s}\right| \leq \|\partial_x\omega_{\eps}\|_{L^{\infty}(\R)} \,  (|T(x)-x| + |T(y) -y|),
$$
we can use dominated convergence theorem (this only uses $\gamma_0\in L^1(\R)$ and $\mathcal{W}_2(\gamma_0,\gamma_1)<\infty$) to pass to the limit $s \to 0$ and deduce
$$
\frac{\diff}{\diff s}\mathcal{G}[\gamma_s] \Big|_{s=0} = \frac{1}{2}\int_{\R}\int_{\R} \partial_x\omega_{\eps}(x-y)\, \big((T(x)-x)-(T(y)-y)\big)\, \gamma_0(x) \gamma_0(y) \diff x \diff y.
$$
Using $\omega(x)=\omega(-x)$ we get $\partial_x\omega(x) = -\partial_x\omega(-x)$ and we symmetrize the expression above to get
$$
\frac{\diff}{\diff s}\mathcal{G}[\gamma_s] \Big|_{s=0} = \int_{\R}\int_{\R} \partial_x\omega_{\eps}(x-y)\, (T(x)-x)\, \gamma_0(x) \gamma_0(y) \diff x \diff y = \int_{\R} \partial_x  \frac{\delta\mathcal{G}}{\delta \nu}[\mu_t] \, (T(x)-x) \diff \mu_t(x).
$$
We conclude by noting $T(x)-x=-\nabla\varphi(x)$. The case $s=1$ is studied analogously, using $\gamma_1 = \nu_t \in L^1(\R)$ and $\mathcal{W}_2(\gamma_0,\gamma_1)<\infty$. \\

Having checked Assumption \ref{eq:assumption_simplified_formula}, Theorem \ref{thm:simplified_for_2-Wasserstein} with $\lambda = 0$, $\mu_t:=\mu_t$ and $\nu_t:=\mu_t^{\eps}$ implies \eqref{eq:main_proof_rate_result_step_2}.\\

\underline{Step 2: The result for initial conditions bounded from below.} We assume additionally that $\mu_0$ is smooth and $\mu_0 \geq \sigma \, e^{-|x|^2}$ for some $\sigma>0$. We will deduce from \eqref{eq:main_proof_rate_result_step_2} that
\begin{equation}\label{eq:estimate_W2_eps_regularized_initial_condition}
\mathcal{W}^2_2(\mu_t^{\eps}, \mu_t) \leq 2\,t \, \eps^2\,  \Big(5 t\,\| \partial_x \mu_0\|^2_{L^{\infty}(\R)}\,  + 
\frac{1}{2}\, \|\mu_0\|_{L^{\infty}(\R)} \Big) \, \| |\partial_{x}^2 \mu_0|^{-} \|_{\mathcal{M}(\R)}\, \| \omega\,|y|^2 \|_{L^1(\R)}.
\end{equation}
Since the initial condition is bounded from below, $\mu_t$ is smooth. Starting from \eqref{eq:main_proof_rate_result_step_2}, we first apply Jensen's inequality to deduce
$$
\mathcal{W}^2_2(\mu^{\eps}_t, \mu_t) \leq  t  \int_0^t  \int_{\R} \int_{\R} | \partial_x \mu_s(x-y) - \partial_x \mu_s(x) |^2 \, \mu_s(x) \, \omega_{\eps}(y) \diff x \diff y \diff s := \mathcal{I}.
$$
To estimate $\mathcal{I}$, we split the integration set into two subsets
$$
A := \{(s,x,y):  \mu_s(x) \leq 2\,\|\partial_x \mu_0\|_{L^{\infty}} \, |y| \},  \quad
B := \{(s,x,y): \mu_s(x) > 2\, \|\partial_x \mu_0\|_{L^{\infty}} \, |y| \},   
$$
resulting in two integrals $\mathcal{I}_{A}$ and $\mathcal{I}_{B}$. We start with a general observation
\begin{equation}\label{eq:differentiation_in_terms_of_2nd_derivative}
 \partial_x \mu_s(x-y) - \partial_x \mu_s(x) = \int_0^1 \partial_x^2 \mu_s(x - \theta \, y) \, y \diff \theta,
\end{equation}
so that we can estimate
\begin{multline*}
 | \partial_x \mu_s(x-y) - \partial_x \mu_s(x) |^2 = 2\, \|\partial_x \mu_s\|_{L^{\infty}(\R)}\,  | \partial_x \mu_s(x-y) - \partial_x \mu_s(x) | \leq \\ \leq 2\, \|\partial_x \mu_0\|_{L^{\infty}(\R)} \int_0^1 |\partial_x^2 \mu_s(x - \theta \, y)| \, |y| \diff \theta,  
\end{multline*}
where we used $\| \partial_x \mu_s \|_{L^{\infty}(\R)} \leq \| \partial_x \mu_0 \|_{L^{\infty}(\R)}$ (Lemma \ref{lem:estimates_PME_1D}). Hence, using also the bound defining set $A$
\begin{align*}
\mathcal{I}_{A} &\leq  4\, \|\partial_x \mu_0\|^2_{L^{\infty}(\R)}\, t  \int_0^t  \int_{\R} \int_{\R} \int_0^1 |\partial_x^2 \mu_s(x - \theta \, y)| \, |y|^2  \, \omega_{\eps}(y)  \diff \theta \diff x \diff y \diff s \\
&= 4\, \|\partial_x \mu_0\|^2_{L^{\infty}(\R)}\, t  \int_0^t  \int_{\R}  |\partial_x^2 \mu_s(x)|  \diff x \diff s \int_{\R} \omega_{\eps}(y)\, |y|^2 \diff y \\
&\leq 8\, \|\partial_x \mu_0\|^2_{L^{\infty}(\R)}\, t^2 \, \eps^2 \, \| |\partial^2_x\mu_0|^- \|_{\mathcal{M}(\R)} \, \| \omega\, |y|^2 \|_{L^1(\R)},
\end{align*}
where we used \ref{est:laplacian_pressure} in Lemma \ref{lem:estimates_PME_1D} to estimate $\int_{\R}  |\partial_x^2 \mu_s(x)|  \diff x$ and the change of variables $ \int_{\R} \omega_{\eps}(y)\, |y|^2 \diff y = \eps^2 \int_{\R} \omega(y) \, |y|^2 \diff y$. Concerning $\mathcal{I}_{B}$, we use \eqref{eq:differentiation_in_terms_of_2nd_derivative} together with Jensen's inequality to obtain
$$
| \partial_x \mu_s(x-y) - \partial_x \mu_s(x) |^2 \leq \int_0^1 \left| \partial_x^2 \mu_s(x - \theta \, y) \right|^2 \, |y|^2 \diff \theta.
$$
Plugging this into the definition of $\mathcal{I}_B$ we obtain
\begin{equation}\label{eq:term_I_B_estimate_rate_nonlocal}
\mathcal{I}_B \leq t  \int  \int \int_{B} \int_0^1 \left| \partial_x^2 \mu_s(x - \theta \, y) \right|^2  \, \mu_s(x) \, \omega_{\eps}(y) \,  |y|^2  \diff \theta \diff x \diff y \diff s.
\end{equation}
We want to use the $L^2$ estimate on $\sqrt{\mu_s} \,\partial^2_x \mu_s$, cf. \ref{est:weighted_pressure} in Lemma \ref{lem:estimates_PME_1D}. We note that thanks to the definition of $B$ and $\| \partial_x \mu_s \|_{L^{\infty}(\R)} \leq \| \partial_x \mu_0 \|_{L^{\infty}(\R)}$ we have
$$
\mu_s(x - \theta \, y) \geq \mu_s(x) - \|\partial_x \mu_s \|_{L^{\infty}(\R)} \, |y| \, \theta >
2\, \|\partial_x \mu_0\|_{L^{\infty}(\R)} \, |y|- \|\partial_x \mu_0\|_{L^{\infty}(\R)} \, |y| = \|\partial_x \mu_0\|_{L^{\infty}(\R)} \, |y|.
$$
Hence, we can estimate
\begin{multline*}
\mu_s(x) =\mu_s(x - \theta \, y)\, \frac{\mu_s(x)}{\mu_s(x - \theta \, y)} \leq \mu_s(x - \theta \, y)\left(1 + \frac{|\mu_s(x) -\mu_s(x - \theta \, y)| }{\mu_s(x - \theta \, y)}\right)\\
\leq \mu_s(x - \theta \, y)\, \left(1 + \frac{\|\partial_x \mu_0\|_{L^{\infty}(\R)} \, \theta\, |y|}{\|\partial_x \mu_0\|_{L^{\infty}(\R)} \, |y|}\right) \leq 2\, \mu_s(x - \theta \, y)
\end{multline*}
and plugging this into \eqref{eq:term_I_B_estimate_rate_nonlocal} we arrive at
\begin{align*}
\mathcal{I}_B &\leq 2\, t  \int_0^t  \int_{\R} \int_{\R} \int_0^1 \left| \partial_x^2 \mu_s(x - \theta \, y) \right|^2  \, \mu_s(x-\theta\,y) \, \omega_{\eps}(y) \,  |y|^2  \diff \theta \diff x \diff y \diff s \\
&= 2\,t \, \| \sqrt{\mu_s} \,\partial^2_x \mu_s \|^2_{L^2((0,t)\times \R)} \, \| \omega_{\eps}\,|y|^2 \|_{L^1(\R)}\\
&\leq 2\,t \, \eps^2\,  \Big( t\,\| \partial_x \mu_0\|^2_{L^{\infty}(\R)}\,  + 
\frac{1}{2}\, \|\mu_0\|_{L^{\infty}(\R)} \Big) \, \| |\partial_{x}^2 \mu_0|^{-} \|_{\mathcal{M}(\R)}\, \| \omega\,|y|^2 \|_{L^1(\R)},
\end{align*}
where in the last line we used \ref{est:weighted_pressure} in Lemma \ref{lem:estimates_PME_1D}. Collecting estimates on $\mathcal{I}_A$ and $\mathcal{I}_B$ we arrive at \eqref{eq:estimate_W2_eps_regularized_initial_condition}.\\ 

\underline{Step 3: Conclusion of the argument.} We now extend \eqref{eq:estimate_W2_eps_regularized_initial_condition} for all $\mu_0 \in \mathcal{P}_2(\R)\cap L^{\infty}(\R)$ with $\partial_x \mu_0 \in L^{\infty}(\R)$, $|\partial^2_x \mu_0|^- \in \mathcal{M}(\R)$. We consider a sequence of initial conditions defined by $\mu_0^n = (1-\frac{1}{n}) \mu_0\ast g_n + \frac{1}{n}\, \frac{1}{\sqrt{2\pi}} e^{-\frac{|x|^2}{2}}$ where $g_n$ is the density of the Gaussian distribution with variance $\frac{1}{n}$. Clearly, $\mu_0^n$ is smooth, $\mu_0^n \rightharpoonup \mu_0$ narrowly (in duality with $C_b(\R)$) and $\int_{\R} |x|^2 \diff \mu_0^n(x) \to \int_{\R} |x|^2 \diff \mu_0(x)$ (this follows by expanding $|x|^2 = |x-y|^2 + |y|^2 + 2(x-y)y$ in the convolution $\mu_0\ast g_n$) so that by \cite[Theorem 5.11]{MR3409718}
\begin{equation}\label{eq:proof_nonlocal_2_local_approx_init_cond}
\mathcal{W}_2(\mu_0^n, \mu_0) \to 0 \mbox{ as } n\to \infty.
\end{equation}
We let $\mu^{\eps,n}_t$ and $\mu^n_t$ to be the solutions to \eqref{eq:PDE_nonlocal_1D_eps} and \eqref{eq:PDE_local_1D_eps}, respectively, with initial condition $\mu_0^n$. From \eqref{eq:estimate_W2_eps_regularized_initial_condition} we deduce
\begin{equation}\label{eq:estimate_W2_eps_regularized_initial_condition_fixed_n}
\mathcal{W}^2_2(\mu_t^{\eps,n}, \mu^n_t) \leq 2\,t \, \eps^2\,  \Big(5 t\,\| \partial_x \mu^n_0\|^2_{L^{\infty}(\R)}\,  + 
\frac{1}{2}\, \|\mu^n_0\|_{L^{\infty}(\R)} \Big) \, \| |\partial_{x}^2 \mu^n_0|^{-} \|_{\mathcal{M}(\R)}\, \| \omega\,|y|^2 \|_{L^1(\R)}.
\end{equation}
The well-known geodesic convexity of both functionals $\mathcal{F}$ and $\mathcal{G}$ implies that
$$
\mathcal{W}_2(\mu_t^{n},\mu_t)\leq \mathcal{W}_2(\mu_0^n, \mu_0), \qquad \qquad \mathcal{W}_2(\mu_t^{\eps,n},\mu_t^{\eps})\leq \mathcal{W}_2(\mu_0^n, \mu_0),
$$
so by \eqref{eq:proof_nonlocal_2_local_approx_init_cond} and the triangle inequality we obtain $\lim_{n\to \infty} \mathcal{W}^2_2(\mu_t^{\eps,n}, \mu^n_t) = \mathcal{W}_2^2(\mu_t^{\eps}, \mu_t)$. Finally, direct computation shows that 
$$
\limsup_{n\to \infty} \|  \mu^n_0\|_{L^{\infty}(\R)} \leq \| \mu_0\|_{L^{\infty}(\R)}, \qquad \qquad \limsup_{n\to \infty} \| \partial_x \mu^n_0\|_{L^{\infty}(\R)} \leq \| \partial_x \mu_0\|_{L^{\infty}(\R)},
$$
$$
 \limsup_{n\to \infty}  \| |\partial^2_{x} \mu^n_0|^- \|_{\mathcal{M}(\R)} \leq \| |\partial^2_{x} \mu_0|^- \|_{\mathcal{M}(\R)}.
$$
Indeed, the first two inequalities follow by Young's convolution inequality while the third uses additionally convexity of the function $x \mapsto |x|^{-}$ and Jensen's inequality $| \partial^2_x \mu_0\ast g_n |^- \leq |\partial^2_x \mu_0|^{-} \ast g_n$. It follows that we can take the limit $n \to \infty$ in \eqref{eq:estimate_W2_eps_regularized_initial_condition_fixed_n} and conclude the proof.

\subsection{Necessary modifications for the case of periodic domain}\label{subsect:nonlocal2local_periodic_domain}

Theorem \ref{thm:rate_of_conv_nonlocal_to_local} can be also established on a periodic domain $\mathbb{T}$ identified with $(-1,1]$ after appropriate periodization of the kernel $\omega$. We briefly discuss here the necessary modifications. Given a kernel $\omega$ on $\R$ satisfying Assumption \ref{ass:kernel} and additionally
\begin{equation}\label{eq:assumptions_growth_for_periodization}
|\omega(x)|, |\partial_x\omega(x)| \leq \frac{C_{\omega}}{1+|x|^{\alpha}}, \qquad \alpha>1,
\end{equation}
we define its periodic extension
\begin{equation}\label{eq:omega_kernel_periodic_domain}
\omega^{\mathbb{T}}_{\eps}(x) = \sum_{m\in 2\mathbb{Z}} \omega_{\eps}(x+m)= \frac{1}{\eps}\sum_{m\in 2\mathbb{Z}} \omega\Big(\frac{x+m}{\eps}\Big).
\end{equation}
This allows to avoid ambiguity when considering $\omega_{\eps}$ since $\omega$ does not need to be compactly supported. It also ensures that $\int_{\mathbb{T}} \omega_{\eps}^{\mathbb{T}}(x) \diff x = 1$ if $\omega$ satisfies \ref{ass:kernel_mass}.\\

Let us verify that Assumption \ref{ass:kernel} is satisfied by $\omega^{\mathbb{T}}_{\eps}$ on $\mathbb{T}$, or at least that the assumptions can be suitably modified. Concerning \ref{ass:kernel_regularity}, by \eqref{eq:assumptions_growth_for_periodization}, we have for $x\in[-1,1]$
$$
|\omega^{\mathbb{T}}_{\eps}(x)|\leq \frac{1}{\eps} \sum_{m \in 2\mathbb{Z}} \frac{C_{\omega}}{1+\left|\frac{x+m}{\eps}\right|^{\alpha}} \leq 
\frac{1}{\eps} \sum_{m \in 2\mathbb{Z}\setminus\{0\}} \frac{C_{\omega}}{1+\frac{||m|-1|^{\alpha}}{\eps^{\alpha}}} + \frac{C_{\omega}}{\eps}< \infty.
$$
The same argument works for $\partial_x \omega^{\mathbb{T}}_{\eps}$ again by \eqref{eq:assumptions_growth_for_periodization}. To prove the bound on $\partial^2_x \omega^{\mathbb{T}}_{\eps}$ in $\mathcal{M}(\mathbb{T})$, we consider a periodic $\psi \in C_b^{\infty}(\mathbb{T})$ and we compute
\begin{multline*}
\int_{-1}^1 \psi(x)\, \partial^2_x \omega^{\mathbb{T}}_{\eps}(x) \diff x = \frac{1}{\eps} \sum_{m\in 2\mathbb{Z}} \int_{-1}^1 \omega\left( \frac{x+m}{\eps} \right) \,\partial^2_x \psi(x) \diff x = \\ =  \sum_{m\in 2\mathbb{Z}} \int_{\frac{-1+m}{\eps}}^{\frac{1+m}{\eps}} \omega(y) \, (\partial^2_x \psi)(\eps\,y -m) \diff y= \int_{\R} \omega(y)\, (\partial^2_x \psi)(\eps\,y) \diff y = \int_{\R} \omega_{\eps}(y) \, \partial^2_x \psi(y).
\end{multline*}
Taking supremum over all $\|\psi\|_{\infty}\leq 1$, we obtain $\|\partial^2_x \omega^{\mathbb{T}}_{\eps} \|_{\mathcal{M}(\mathbb{T})} = \|\partial^2_x\omega_{\eps} \|_{\mathcal{M}(\mathbb{R})}$.\\

Concerning condition \ref{ass:kernel_convexity} on $\R$, it is necessary to ensure the geodesic convexity of the functional $\mathcal{G}[\mu] = \frac{1}{2} \int_{\R} \mu\, \mu \ast \omega_{\eps} \diff x$. Let us see that it is also true on $\mathbb{T}$ with the kernel defined by \eqref{eq:omega_kernel_periodic_domain}. Reader may consult Appendix \ref{rem:optimal_transport_periodic_domain} for a general theory of optimal transport on periodic domains. First, given two absolutely continuous measures $\mu, \nu \in \mathcal{P}_2(\mathbb{T})$, there is always a monotone map $T:[-1,1]\to[-2,2]$ such that $T(x)\mbox{ mod }2$ is the optimal transport map from $\mu$ to $\nu$ \cite[Section 2]{MR2558330}. The map $T$ is obtained as the optimal transport between the periodic extensions of measures $\mu$ and $\nu$ to $\R$ \cite{MR1711060}. In particular, $T(x)-x$ is a periodic function. The geodesic connecting $\mu$ and $\nu$ in $\mathcal{P}_2(\mathbb{T})$ is defined via 
\begin{equation}\label{eq:geodesics_one_dim_circle_wasserstein}
\gamma_s=(T_{s,P})^{\#}\mu, \qquad T_{s,P}(x) = (x + s (T(x) - x)) \mbox{ mod }2.
\end{equation}
We need to prove that $[0,1]\ni s\mapsto \mathcal{G}^{\mathbb{T}}[\gamma_s] = \frac{1}{2} \int_{\mathbb{T}} \gamma_s\, \gamma_s \ast \omega^{\mathbb{T}}_{\eps} \diff x$ is convex. Using that $\omega^{\mathbb{T}}_{\eps}$ is periodic, we can ignore modulo in \eqref{eq:geodesics_one_dim_circle_wasserstein} and write
\begin{multline*}
\mathcal{G}^{\mathbb{T}}[\gamma_s]  = 
 \frac{1}{2} \int_{-1}^1 \int_{-1}^1 \omega^{\mathbb{T}}_{\eps}(T_{s,P}(x) - T_{s,P}(y)) \diff \gamma_0(x) \diff \gamma_0(y) =  \\ =
  \frac{1}{2} \int_{-1}^1 \int_{-1}^1 \omega^{\mathbb{T}}_{\eps}\big(s(T(x) -T(y)) + (1-s)(x-y)\big) \diff \gamma_0(x) \diff \gamma_0(y).
\end{multline*}
Now, we write $\omega^{\mathbb{T}}_{\eps}$ as a sum \eqref{eq:omega_kernel_periodic_domain} and we consider each summand separately. For $m = 0$, we split the integral into two sets $x \leq y$ and $x > y$. By monotonicity of $T$, $x\leq y$ implies $T(x) \leq T(y)$ so $s(T(x) -T(y)) + (1-s)(x-y)\leq 0$ and we can use convexity of $\omega$ to deduce
\begin{multline*}
  \frac{1}{2} \int \int_{x\leq y} \omega\left(\frac{s(T(x) -T(y)) + (1-s)(x-y)}{\eps}\right) \diff \gamma_0(x) \diff \gamma_0(y) \leq \\
  \frac{s}{2} \int \int_{x\leq y} \omega\left(\frac{T(x) -T(y)}{\eps}\right) \diff \gamma_0(x) \diff \gamma_0(y) +  \frac{1-s}{2} \int \int_{x\leq y} \omega\left(\frac{x-y}{\eps}\right) \diff \gamma_0(x) \diff \gamma_0(y).
\end{multline*}
Similar argument works for $x>y$. For $m\neq 0$, we observe that $T(-1) + 2 = T(1)$ by periodicity of $T(x)-x$ so by monotonicity $|T(x)-T(y)|\leq 2$. As $|x-y|\leq 2$, we deduce $|s(T(x) -T(y)) + (1-s)(x-y)|\leq 2$. Hence, for $m \neq 0$, $s(T(x) -T(y)) + (1-s)(x-y) - m$ does not change the sign and we can use convexity of $\omega(x)$ for either $x\leq 0$ or $x\geq 0$ to prove
\begin{multline*}
  \frac{1}{2} \int_{-1}^1\int_{-1}^1 \omega\left(\frac{s(T(x) -T(y))\! + \!(1-s)(x-y)\!-\!m}{\eps}\right) \diff \gamma_0(x) \diff \gamma_0(y) \leq \\
  \frac{s}{2} \int_{-1}^1 \int_{-1}^1 \omega\left(\frac{T(x)\! -\!T(y)\!-\!m}{\eps}\right) \diff \gamma_0(x) \diff \gamma_0(y) +  \frac{1-s}{2} \int_{-1}^1 \int_{-1}^1 \omega\left(\frac{x\!-y\!-\!m}{\eps}\right) \diff \gamma_0(x) \diff \gamma_0(y).
\end{multline*}
Now, summing up all the terms we obtain
$$
\mathcal{G}^{\mathbb{T}}[\gamma_s]  \leq \frac{s}{2} \int_{-1}^1 \int_{-1}^1 \omega^{\mathbb{T}}_{\eps}(T(x) -T(y)) \diff \gamma_0(x) \diff \gamma_0(y) + \frac{1-s}{2} \int_{-1}^1 \int_{-1}^1 \omega^{\mathbb{T}}_{\eps}(x-y) \diff \gamma_0(x) \diff \gamma_0(y).
$$
Using periodicity of $\omega^{\mathbb{T}}_{\eps}$, we identify the first term as $s\,\mathcal{G}^{\mathbb{T}}[\gamma_1]$ while the second is equal to $(1-s)\, \mathcal{G}^{\mathbb{T}}[\gamma_0]$, concluding the proof.\\

Finally, we comment that since $\omega_{\eps}^{\mathbb{T}}$ is periodic, there is no issue with computing the slopes $\frac{\diff}{\diff s}\mathcal{G}^{\mathbb{T}}[\gamma_s] \Big|_{s=0} $, $\frac{\diff}{\diff s}\mathcal{G}^{\mathbb{T}}[\gamma_s] \Big|_{s=1}$ since the modulo in the definition of $T_{s,P}$ in \eqref{eq:geodesics_one_dim_circle_wasserstein} can be ignored by the periodicity of $\omega_{\eps}^{\mathbb{T}}$ and we can argue as in \eqref{eq:computing_the_slope_n2l_case_R} using $\partial_x \omega^{\mathbb{T}}_{\eps} \in L^{\infty}(\mathbb{T})$. Since on $\mathbb{T}$, the Kantorovich potential satisfy $-\nabla \varphi(x) = T(x)-x$ (see \eqref{eq:Kantorovich_potential_periodic_domain}), we directly arrive at \ref{eq:derivative_via_1st_var_sect:alternative_proof} for $s=0$ by following the strategy on $\mathbb{R}$. The case $s=1$ follows by applying Remark \ref{rem:inverse_geodesic_torus_from_mu1_to_mu0} and the formula $\frac{\diff}{\diff s}\mathcal{G}^{\mathbb{T}}[\gamma_s] \Big|_{s=1} = -\frac{\diff}{\diff s}\mathcal{G}^{\mathbb{T}}[\gamma_{1-s}] \Big|_{s=0}$. \\

Therefore, with kernel $\omega^{\mathbb{T}}_\eps$ defined via \eqref{eq:omega_kernel_periodic_domain} where $\omega$ satisfies Assumption \ref{ass:kernel} and additionally \eqref{eq:assumptions_growth_for_periodization}, Theorem \ref{thm:rate_of_conv_nonlocal_to_local} can be established in the same way on $\mathbb{T}$. We remark that the main estimates to establish \eqref{eq:main_thm_rate_nonlocal_to_local}, namely Lemma \ref{lem:estimates_PME_1D}, is also valid on $\mathbb{T}$.

\section{Proof of Theorem \ref{thm:stability_estimate_AD_eq}}\label{sect:aggregation-diffusion}

\begin{proof}[Proof of Theorem \ref{thm:stability_estimate_AD_eq}]

\underline{Step 1: Useful a priori estimates.}  First, since $\mu_0 \in L^{1}(\Omega)\cap L^{\infty}(\Omega)$, $\mu_t$ is in $L^p(\Omega)$ for all $p\in[1,\infty]$ and all $t\geq0$. This is implied by the following inequality valid for all $k \geq 0$
\begin{equation}\label{eq:Lp_norm_sol_A-D_not_integrated_in_time_STEP1}
\partial_t \left(L_{\mu}^{-k}(t)\int_{\Omega} \mu_t^{k+1} \diff x \right) +  L_{\mu}^{-k}(t)\,m\,k\,(k+1) \int_{\Omega} \mu_t^{m+k-2}\, |\nabla \mu_t|^2 \diff x \leq 0,
\end{equation}
where $L_\mu(t) := \exp(t\, (\|\Delta V_{\mu}\|_{L^{\infty}(\Omega)} +\|\Delta W_{\mu}\|_{L^{\infty}(\Omega)}))$. To prove \eqref{eq:Lp_norm_sol_A-D_not_integrated_in_time_STEP1} we use the PDE \eqref{eq:aggregation_diffusion_1} to compute
\begin{align*}
\partial_t \int_{\Omega} &\mu_t^{k+1} \diff x = -(k+1) \int_{\Omega} \nabla \mu_t^m \, \nabla \mu_t^k \diff x - k \int_{\Omega} \nabla (V_{\mu}+W_{\mu}\ast \mu_t)\, \nabla \mu_t^{k+1} \diff x\\
&\leq -m\,k\,(k+1) \int_{\Omega} \mu_t^{m+k-2}\, |\nabla \mu_t|^2 \diff x + k\, (\|\Delta V_{\mu}\|_{L^{\infty}(\Omega)} +\|\Delta W_{\mu}\|_{L^{\infty}(\Omega)}) \int_{\Omega} \mu_t^{k+1} \diff x,
\end{align*}
where for the second term we integrated by parts and used $\|\Delta W_{\mu} \ast \mu_s\|_{L^{\infty}(\Omega)} \leq \|\Delta W_{\mu}\|_{L^{\infty}(\Omega)}$. Multiplying by $L_{\mu}^{-k}(t)$, we arrive at \eqref{eq:Lp_norm_sol_A-D_not_integrated_in_time_STEP1}. In particular, by sending $k \to \infty$, we obtain 
\begin{equation}\label{eq:auxiliary_L_inf_estimate}
\|\mu_t\|_{L^{\infty}(\Omega)} \leq L_{\mu}(t)\, \|\mu_0\|_{L^{\infty}(\Omega)}.    
\end{equation}
We will also need a lower bound which is a well-known consequence of the comparison principle (see \cite[Lemma~5.2]{MR4842678})
\begin{equation}\label{eq:auxiliary_lower_bound_on_solution}
\mu_t(x) \geq \essinf_{x \in \Omega} \mu_0(x) \, L_{\mu}(t)^{-1}.
\end{equation}

Similarly, by symmetry to \eqref{eq:Lp_norm_sol_A-D_not_integrated_in_time_STEP1}, we have
\begin{equation}\label{eq:Lp_norm_sol_A-D_not_integrated_in_time_STEP1_for_2nd_sol}
\partial_t \left(L_{\nu}^{-k}(t)\int_{\Omega} \nu_t^{k+1} \diff x \right) +  L_{\nu}^{-k}(t)\,n\,k\,(k+1) \int_{\Omega} \nu_t^{n+k-2}\, |\nabla \nu_t|^2 \diff x \leq 0,
\end{equation}
where $L_\nu(t) := \exp(t\, (\|\Delta V_{\nu}\|_{L^{\infty}(\Omega)} +\|\Delta W_{\nu}\|_{L^{\infty}(\Omega)}))$. We also recall the tail estimate which is proved in Appendix \ref{app:general_aggr_diff_PDE} (see \eqref{eq:moment_estimate_second_lemma_finite_speed}):
\begin{equation}\label{eq:tail_estimate_main_section_aggregation_diffusion}
\left( \int_{\Omega} |x|^2 \, \mu_t(x)\diff x \right)^{1/2} \leq \left( \int_{\Omega} |x|^2 \, \mu_0(x)\diff x \right)^{1/2} + \sqrt{t}\, \left(\mathcal{F}[\mu_0]+ C^0_{V_{\mu}} + C^0_{W_{\mu}} \right)^{1/2},
\end{equation}
\begin{equation}
\label{eq:tail_estimate_main_section_aggregation_diffusion_2_for_nu_t}
\left( \int_{\Omega} |x|^2 \, \nu_t(x)\diff x \right)^{1/2} \leq \left( \int_{\Omega} |x|^2 \, \mu_0(x)\diff x \right)^{1/2} + \sqrt{t}\, \left(\mathcal{G}[\mu_0]+ C^0_{V_{\nu}} + C^0_{W_{\nu}} \right)^{1/2},
\end{equation}
where the functionals $\mathcal{F}$ and $\mathcal{G}$ are defined as follows:
\begin{equation}\label{eq:functional_F_proof_stability_agg_diff_step1}
\mathcal{F}[\mu] = \frac{1}{m-1} \int_{\Omega} \mu^m \diff x + \int_{\Omega}   V_{\mu}\, \mu \diff x + \frac{1}{2}\int_{\Omega}  W_{\mu}\ast\mu \, \mu \diff x,
\end{equation}
\begin{equation}\label{eq:functional_G_proof_stability_agg_diff_step1}
\mathcal{G}[\nu] = \frac{1}{n-1} \int_{\Omega} \nu^n \diff x + \int_{\Omega}   V_{\nu}\, \nu \diff x + \frac{1}{2}\int_{\Omega}  W_{\nu}\ast\nu \, \nu \diff x. 
\end{equation}

\underline{Step 2: Application of Theorem \ref{thm:simplified_for_2-Wasserstein}.} We additionally assume that $\mu_0$ is compactly supported (this is empty assumption on the torus), so $\mu_t$ and $\nu_t$ stay compactly supported for all times by Theorem \ref{thm:aggr-diff-compact-supp}. We also assume that $\Omega=\Rd$ and we discuss necessary adaptations for the case $\Omega=R\,\Td$ in Step 5. We will prove 
\begin{equation}\label{eq:stability_A-D_result_step1}
\begin{split}
\mathcal{W}_2(\mu_t, \nu_t) \leq &\left(\frac{1-e^{-2\Lambda t}}{2\Lambda}\right)^{1/2} \left( \int_0^t  \int_{\Omega} \left| m  - n \, \mu_s^{n-m} \right|^2 |\nabla \mu_s|^2 \, \mu_s^{2m-3} \diff x  \diff s\right)^{\frac{1}{2}}\\
& + \left(\frac{1-e^{-2\Lambda t}}{2\Lambda}\right)^{1/2}  \left( \int_0^t  \int_{\Omega} \left| \nabla V_{\mu}-\nabla V_{\nu} \right|^2 \, \mu_s \diff x  \diff s\right)^{\frac{1}{2}}\\
&+  \left(\frac{1-e^{-2\Lambda t}}{2\Lambda}\right)^{1/2}  \left(  \int_0^t \int_{\Omega} \left| \nabla (W_{\mu}- W_{\nu})\ast\mu_s \right|^2 \, \mu_s \diff x  \diff s\right)^{\frac{1}{2}},
\end{split} 
\end{equation}
where $\Lambda$ is defined in \eqref{eq:ass_convexity_stability_aggr_diffusion_exponent_gamma}. To this end, for the functionals $\mathcal{F}$ and $\mathcal{G}$ defined in \eqref{eq:functional_F_proof_stability_agg_diff_step1}--\eqref{eq:functional_G_proof_stability_agg_diff_step1}, we compute their first variations 
$$
\frac{\delta \mathcal{F}}{\delta \mu}[\mu]=\frac{m}{m-1}\mu^{m-1} + V_\mu+ W_{\mu}\ast\mu, \qquad \frac{\delta \mathcal{G}}{\delta \nu}[\nu]= \frac{n}{n-1}\nu^{n-1} + V_\nu+ W_{\nu}\ast\nu,
$$
and we will verify Assumption \ref{eq:assumption_simplified_formula} with $\mathcal{G}$ as above and $\bvmu[\mu]= -\nabla\frac{\delta \mathcal{F}}{\delta \mu}[\mu]$. Regarding condition \ref{ass:regularity_solutions:add_sect_alternative_proof}, both functions $\nabla\frac{\delta \mathcal{F}}{\delta \mu}[\mu_t] \,\sqrt{\mu_t}$, $\nabla \frac{\delta \mathcal{G}}{\delta \nu}[\nu_t] \,\sqrt{\nu_t}$ belong to $L^2((0,T)\times\Omega)$ because of standard energy identities
\begin{equation}\label{eq:energy_dissipation_for_aggr_diffusion_main_proof}
\partial_t \mathcal{F}[\mu_t] + \int_{\Omega} \left| \nabla \frac{\delta \mathcal{F}}{\delta \mu}[\mu_t] \,\sqrt{\mu_t} \right|^2 \diff x\leq 0, \quad \partial_t \mathcal{G}[\nu_t] + \int_{\Omega} \left| \nabla \frac{\delta \mathcal{G}}{\delta \nu}[\nu_t] \,\sqrt{\nu_t} \right|^2 \diff x\leq 0.
\end{equation}
Moreover, the function $\nabla \frac{\delta \mathcal{G}}{\delta \nu}[\mu_t] \,\sqrt{\mu_t} = n\,\mu_t^{n-\frac{3}{2}}\,\nabla\mu_t + \sqrt{\mu_t}\,\nabla V_{\nu} + \sqrt{\mu_t}\,\nabla W_{\nu}\ast\mu_t$ is also in $L^2((0,T)\times\Omega)$. Indeed, the first term is controlled thanks to \eqref{eq:Lp_norm_sol_A-D_not_integrated_in_time_STEP1} with $k=2n-m-1$. Next, $\sqrt{\mu_t}\,\nabla V_{\nu}$ can be estimated in $L^2((0,T)\times\Omega)$ by the tail estimate \eqref{eq:tail_estimate_main_section_aggregation_diffusion} and the growth condition \ref{ass_item:growth_conditions_V_W_main_thm}. Finally, for $\sqrt{\mu_t}\,\nabla W_{\nu}\ast\mu_t$ we use \ref{ass_item:growth_conditions_V_W_main_thm} to estimate
\begin{equation}\label{eq:estimate_nabla_W_mollifies_mu_Step_2}
\begin{split}
\sqrt{\mu_t}\,|\nabla W_{\nu}\ast\mu_t| &\leq \sqrt{\mu_t} \, \int_{\Omega} (C^3_{W_{\nu}} + C^4_{W_{\nu}}\,|x-y|)\diff \mu_t(y) \\
&\leq \sqrt{\mu_t} \,\left(C^3_{W_{\nu}} + C^4_{W_{\nu}} \,|x| + C^4_{W_{\nu}} \,\left(\int_{\Omega} |y|^2 \diff \mu_t(y)\right)^{\frac{1}{2}}\right),
\end{split}
\end{equation}
so we can conclude again by \eqref{eq:tail_estimate_main_section_aggregation_diffusion} and we deduce $\nabla \frac{\delta \mathcal{G}}{\delta \nu}[\mu_t] \,\sqrt{\mu_t} \in L^2((0,T)\times\Omega)$ as desired. Finally, $\mu_t, \nu_t \in \mathcal{D}(\mathcal{G})$ because by 
\eqref{eq:Lp_norm_sol_A-D_not_integrated_in_time_STEP1} and \eqref{eq:Lp_norm_sol_A-D_not_integrated_in_time_STEP1_for_2nd_sol}, $\mu_t, \nu_t \in L^p(\Omega)$ for all $p \in [1,\infty]$, they are compactly supported while $V_\nu$, $W_\nu$ are locally bounded by \ref{ass_item:growth_conditions_V_W_main_thm} so all the integrals in $\mathcal{G}[\mu_t]$, $\mathcal{G}[\nu_t]$ are finite. Note that this argument shows that each integrand in $\mathcal{G}[\mu_t]$ and $\mathcal{G}[\nu_t]$ belongs to $L^1(\Omega)$.\\

Regarding \ref{eq:geodesic_conv_G_sect:alternative_proof}, we will prove that $\mathcal{G}$ is $\Lambda$-geodesically convex, where $\Lambda$ is defined in \eqref{eq:ass_convexity_stability_aggr_diffusion_exponent_gamma}. Indeed, we split
\begin{equation}\label{eq:functional_G_aggr-diff_splitting_for_3_functioonals}
\mathcal{G}_1[\nu] =   \frac{1}{n-1} \int_{\Omega} \nu^n \diff x, \quad \mathcal{G}_2[\nu] = \int_{\Omega}   V_{\nu}\, \nu \diff x, \quad \mathcal{G}_3[\nu] = \frac{1}{2}\int_{\Omega}  W_{\nu}\ast\nu \, \nu \diff x.
\end{equation}
From \cite[Theorem 5.15]{MR1964483} and \cite{CMV03}, $\mathcal{G}_1$ is 0-geodesically convex while $\mathcal{G}_2$ is $c_{V_{\nu}}$-geodesically convex. Regarding $\mathcal{G}_3$, when $c_{W_{\nu}}\leq 0$ it is $c_{W_{\nu}}$-geodesically convex while when $c_{W_{\nu}}>0$ it is 0-geodesically convex. Moreover, when $c_{W_{\nu}}>0$ and $\mathcal{G}_3$ is restricted to the space of measures $\nu$ such that $\int_{\Omega}x \diff \nu(x)$ is constant, it is also $c_{W_{\nu}}$-geodesically convex. The latter can be applied when $V=0$ since then a simple computation, exploiting symmetry of $W_{\mu}$ and $W_{\nu}$, shows that the maps $t\mapsto \int_{\Omega}x \diff \mu_t(x)$ and $t\mapsto \int_{\Omega}x \diff \nu_t(x)$ are constant. In particular, the geodesic $\{\gamma_s\}_{s\in[0,1]}$ connecting $\mu_t$ and $\nu_t$ satisfies
$$
\int_{\Omega} x \diff \gamma_s(x) = \int_{\Omega} (s\,T(x) + (1-s)x)\diff \mu_t(x) = s \int_{\Omega} x\diff \nu_t(x) + (1-s)\int_{\Omega} x \diff \mu_t(x) = \int_{\Omega} x \diff \mu_0(x),
$$
where $T^{\#}\mu_t = \nu_t$. Therefore, when $V=0$, we can restrict the reasoning to measures $\nu$ such that $\int_{\Omega} x\diff \nu(x) = \int_{\Omega} x \diff \mu_0(x)$ and exploit better convexity of $\mathcal{G}_3$. Summarizing the three scenarios described above, we arrive at the value of $\Lambda$ in \eqref{eq:ass_convexity_stability_aggr_diffusion_exponent_gamma} and \ref{eq:geodesic_conv_G_sect:alternative_proof} is proved.\\

Finally, we prove that \ref{eq:derivative_via_1st_var_sect:alternative_proof} is satisfied. First, arguing as in Step 1 in the proof of Theorem~\ref{thm:PME} we obtain
$$
\frac{\diff}{\diff s} \mathcal{G}_1[\gamma_s] \Big|_{s=0} \geq -\int_{\Omega} \nabla \frac{\delta \mathcal{G}_1}{\delta \nu}[\mu_t]\, \nabla\varphi(x) \diff \mu_t(x),
$$
where $\varphi(x)$ is a Kantorovich potential corresponding to the optimal transport of $\mu_t$ onto $\nu_t$ (this reasoning used only the fact that $\nabla \mu_t^{n-\frac{1}{2}} \in L^2(\Omega)$ for a.e. $t$ which we know by \eqref{eq:Lp_norm_sol_A-D_not_integrated_in_time_STEP1} applied with $k=2n-m-1$ and that $\mu_t$, $\nu_t$ were compactly supported which we know by Theorem~\ref{thm:aggr-diff-compact-supp}). Next, for $\mathcal{G}_2$ and $\mathcal{G}_3$ we can use directly \cite[Theorem 5.30]{MR1964483} or \cite{CMV03} to obtain
$$
\frac{\diff}{\diff s} \mathcal{G}_2[\gamma_s] \Big|_{s=0} \geq -\int_{\Omega} \nabla V(x)\,\nabla\varphi(x) \diff\mu_t(x) = -\int_{\Omega} \nabla \frac{\delta \mathcal{G}_2}{\delta \nu}[\mu_t]\,\nabla \varphi(x) \diff\mu_t(x),
$$
$$
\frac{\diff}{\diff s} \mathcal{G}_3[\gamma_s] \Big|_{s=0} \geq -\int_{\Omega} \int_{\Omega} \nabla W(x-y)\,\nabla\varphi(x) \diff\mu_t(y) \diff\mu_t(x) = -\int_{\Omega} \nabla \frac{\delta \mathcal{G}_3}{\delta \nu}[\mu_t] \nabla \varphi(x) \diff\mu_t(x).
$$
The assumptions of \cite[Theorem 5.30]{MR1964483}, namely that $\mu_t V_{\nu}$, $\nu_t V_{\nu}$, $\mu_t W_{\nu}\ast \mu_t$, $\nu_t W_{\nu}\ast \nu_t$ are in $L^1(\Omega)$, are satisfied as this was shown when proving that $\mu_t, \nu_t \in \mathcal{D}(\mathcal{G})$ in the discussion of \ref{ass:regularity_solutions:add_sect_alternative_proof} above.\\

Summing up the inequalities above we arrive at \eqref{eq:slope_G_geodesic_general_assumption_1}. To see \eqref{eq:slope_G_geodesic_general_assumption_2}, we argue as in the proof of Theorem \ref{thm:PME} (see \eqref{eq:proof_the_slope_at_s1_by_inversion_pure_PME}) by considering the geodesic $\{\gamma_{1-s}\}_{s\in[0,1]}$ which connects $\nu_t$ and $\mu_t$. Since all the properties of $\mu_t$ that we used are also satisfied by $\nu_t$ (the compact support is true by Theorem \ref{thm:aggr-diff-compact-supp} while the bound $\nabla \nu_t^{n-\frac{1}{2}}$ in $L^2(\Omega)$ for a.e. $t$ is true by \eqref{eq:Lp_norm_sol_A-D_not_integrated_in_time_STEP1_for_2nd_sol} with $k=n-1$), we can apply the reasoning above to the geodesic $\{\gamma_{1-s}\}_{s\in[0,1]}$ and deduce \eqref{eq:slope_G_geodesic_general_assumption_2}.\\

Hence, we can apply Theorem \ref{thm:simplified_for_2-Wasserstein} with $\lambda = \Lambda$. Therefore,
$$
\partial_t\left(e^{\Lambda \,t} \, \mathcal{W}_2(\mu_t, \nu_t)\right) e^{-\Lambda \,t} \leq \left( \int_{\Omega} \left| \nabla\frac{\delta \mathcal{G}}{\delta \nu}[\mu_t] - \nabla\frac{\delta \mathcal{F}}{\delta \mu}[\mu_t] \right|^2 \diff \mu_t(x) \right)^{\frac{1}{2}}.
$$
By triangle inequality in $L^2(\Omega)$ equipped with measure $\mu_t$ we can estimate
\begin{multline*}
\partial_t\left(e^{\Lambda \,t} \, \mathcal{W}_2(\mu_t, \nu_t)\right) e^{-\Lambda \,t} \leq  \left( \int_{\Omega} \left|m-n\,\mu_t^{n-m}  \right|^2\, |\nabla \mu_t|^2\, \mu_t^{2m-3} \diff x  \right)^{\frac{1}{2}} \\ + \left( \int_{\Omega} \left| \nabla V_{\mu} - \nabla V_{\nu}  \right|^2 \diff \mu_t(x) \right)^{\frac{1}{2}} + \left( \int_{\Omega} \left| \nabla (W_{\mu} - W_{\nu})\ast \mu_t  \right|^2 \diff \mu_t(x) \right)^{\frac{1}{2}}.
\end{multline*}
To arrive at \eqref{eq:stability_A-D_result_step1}, we multiply the equation by $e^{\Lambda \,t}$, integrate in time, apply H{\"o}lder's inequality and use the integral $\int_0^t e^{2\Lambda s} \diff s = \frac{1}{2\Lambda}(e^{2 \Lambda t}-1)$. \\

\underline{Step 3: Proof of \eqref{eq:continuity_wrt_exponent_W2_aggr-diff_main_thm} for compactly supported $\mu_0$.} Comparing \eqref{eq:continuity_wrt_exponent_W2_aggr-diff_main_thm} and \eqref{eq:stability_A-D_result_step1}, we see it remains to estimate $\int_0^t  \int_{\Omega} \left| m  - n \, \mu_s^{n-m} \right|^2 |\nabla \mu_s|^2 \, \mu_s^{2m-3} \diff x  \diff s$. To this end, we first deduce from \eqref{eq:Lp_norm_sol_A-D_not_integrated_in_time_STEP1} that for all $k>0$
\begin{equation}\label{eq:a_priori_estimate_power_density_A-D}
m\, k \, (k+1) \int_0^t \int_{\Omega} \mu_s^{m+k-2}\, |\nabla \mu_s|^2 \diff x \diff s \leq L_{\mu}^{k}(t) \, \int_{\Omega} \mu_0^{k+1},
\end{equation}
This is obtained by integrating in time \eqref{eq:Lp_norm_sol_A-D_not_integrated_in_time_STEP1} and using $L_{\mu}^{-k}(t) \leq L_{\mu}^{-k}(s)$ for $s\in[0,t]$.\\

Now, we observe that the term $\int_0^t  \int_{\Omega} \left| m  - n \, \mu_s^{n-m} \right|^2 |\nabla \mu_s|^2 \, \mu_s^{2m-3} \diff x  \diff s$ has been already estimated in Step 2 of the proof of Theorem \ref{thm:PME} for $\mu_t$ being the solution of $\partial_t \mu_t = \Delta \mu_t^m$. Now, the reasoning be easily generalized to the general aggregation-diffusion equation \eqref{eq:aggregation_diffusion_1} by replacing \eqref{eq:energy_estimate_general_k} with \eqref{eq:a_priori_estimate_power_density_A-D}. Writing $m  - n \, \mu_s^{n-m} = (m-n)\,\mu_s^{n-m} + m-m\,\mu_s^{n-m}$, we estimate
\begin{align*}
&\left(\int_0^t  \int_{\Omega} \left| m  - n \, \mu_s^{n-m} \right|^2 |\nabla \mu_s|^2 \, \mu_s^{2m-3} \diff x  \diff s\right)^{\frac{1}{2}} \leq \\
&\leq \left( \int_0^t  \int_{\Omega} \left| m-n \right|^2 |\nabla \mu_s|^2 \, \mu_s^{2n-3} \diff x  \diff s \right)^{\frac{1}{2}} + \left(\int_0^t  \int_{\Omega} \left| m  - m  \, \mu_s^{n-m} \right|^2 |\nabla \mu_s|^2 \, \mu_s^{2m-3} \diff x  \diff s\right)^{\frac{1}{2}} \\
& \leq |m-n| \left( \int_0^t  \int_{\Omega} |\nabla \mu_s|^2 \, \mu_s^{2n-3} \diff x  \diff s \right)^{\frac{1}{2}} \!+ m\,|m-n| \left( \int_0^t  \int_{\Omega} |\nabla \mu_s|^2 \, \mu_s^{2m-3} \, |\log \mu_s|^2\diff x  \diff s\right)^{\frac{1}{2}}   \\
& \phantom{\leq \, } + m\,|m-n|  \left( \int_0^t  \int_{\Omega} |\nabla \mu_s|^2 \, \mu_s^{2n-3} \, |\log \mu_s|^2 \diff x  \diff s\right)^{\frac{1}{2}} =: Y+X_1 + X_2,
\end{align*}
where terms $X_1$, $X_2$ arise from estimating $|1- \mu_s^{n-m}| \leq |n-m| \, (1+\mu_s^{n-m})\, |\log \mu_s|$. Now, we apply \eqref{eq:a_priori_estimate_power_density_A-D} with $k= 2n-m-1$ to estimate
$$
Y \leq  \frac{|m-n|\, L_{\mu}(t)^{\frac{2n-m-1}{2}}}{\sqrt{m\,(2n-m-1)\,(2n-m)}} \left( \int_{\Omega} \mu_0^{2n-m} \diff x \right)^{\frac{1}{2}}.
$$
Next, for $X_1$, $X_2$ we choose parameters $\alpha\in [0, m-1)$, $\beta \in [0,2n-m-1)$, we write
$$
\mu_s^{2m-3} \, |\log\mu_s|^2=\mu_s^{\alpha}\, |\log\mu_s|^2\mu_s^{2m-3-\alpha}, \qquad \mu_s^{2n-3} \, |\log\mu_s|^2= \mu_s^{\beta}\, |\log\mu_s|^2 \mu_s^{2n-3-\beta},
$$
and we use \eqref{eq:a_priori_estimate_power_density_A-D} with $k=m-1-\alpha$, $k = 2n-m-1-\beta$ to obtain
$$
X_1 \leq \frac{m\,|m-n|\, L_{\mu}(t)^{\frac{m-1-\alpha}{2}}}{\sqrt{m\,(m-1-\alpha)\,(m-\alpha)}} \, \| \mu_s^{\frac{\alpha}{2}}\,|\log \mu_s| \|_{L^{\infty}(\Omega)}\, \left(\int_{\Omega} \mu_0^{m-\alpha} \diff x \right)^{\frac{1}{2}},
$$
$$
X_2 \leq \frac{m\,|m-n|\, L_{\mu}(t)^{\frac{2n-m-1-\beta}{2}}}{\sqrt{m\,(2n-m-1-\beta)\,(2n-m-\beta)}} \,  \| \mu_s^{\frac{\beta}{2}}\,|\log \mu_s| \|_{L^{\infty}(\Omega)}\, \left(\int_{\Omega} \mu_0^{2n-m-\beta} \diff x \right)^{\frac{1}{2}}.
$$
The terms $\| \mu_s^{\frac{\alpha}{2}}\,|\log \mu_s| \|_{L^{\infty}(\Omega)},\| \mu_s^{\frac{\beta}{2}}\,|\log \mu_s| \|_{L^{\infty}(\Omega)}$ can be estimated as in \eqref{eq:how_to_handle_log_PME} but we have to take into account additional terms in the maximum principle \eqref{eq:auxiliary_L_inf_estimate} and \eqref{eq:auxiliary_lower_bound_on_solution}. More precisely, as in \eqref{eq:how_to_handle_log_PME}, we can estimate using \eqref{eq:auxiliary_L_inf_estimate}
$$
\|\mu_s^{\kappa} \, \log \mu_s \|_{L^{\infty}(\Omega)} \leq \frac{1}{e\,\kappa} + \| \mu_s^{\kappa+1} \|_{L^{\infty}(\Omega)} \leq \frac{1}{e\,\kappa} + L_{\mu}^{1+\kappa} \, \| \mu_0 \|^{\kappa+1}_{L^{\infty}(\Omega)},    
$$
where $\kappa = \frac{\alpha}{2},\frac{\beta}{2}$. Alternatively, we can use additionally the lower bound \eqref{eq:auxiliary_lower_bound_on_solution} to estimate
$$
\|\mu_s^{\kappa} \, \log \mu_s \|_{L^{\infty}(\Omega)} \leq \| \mu_s^{\kappa} \|_{L^{\infty}(\Omega)} \, \| \log \mu_{s}\|_{L^{\infty}(\Omega)} \leq L_{\mu}^{\kappa} \, \| \mu_0 \|^{\kappa}_{L^{\infty}(\Omega)}\,(\| \log\mu_0\|_{L^{\infty}(\Omega)} + \log L_{\mu}(t)).    
$$
In particular, since $L_{\mu}\geq 1$,
$$
L_{\mu}^{-\kappa}\, \|\mu_s^{\kappa} \, \log \mu_s \|_{L^{\infty}(\Omega)} \leq \min\left(\frac{1}{e\,\kappa} + L_{\mu} \, \| \mu_0 \|^{\kappa+1}_{L^{\infty}(\Omega)} ,  \| \mu_0 \|^{\kappa}_{L^{\infty}(\Omega)}\,(\| \log\mu_0\|_{L^{\infty}(\Omega)} + \log L_{\mu}(t))  \right).
$$
To unify the remaining powers of $L_{\mu}$ in the estimates of $X_1$ and $X_2$ we note that, using $L_{\mu}(t)\geq 1$ and $n\geq m$, we have
$$
L_{\mu}(t)^{\frac{m-1}{2}} \leq  L_{\mu}(t)^{\frac{2n-m-1}{2}},
$$
so collecting the estimates above, we arrive at \eqref{eq:continuity_wrt_exponent_W2_aggr-diff_main_thm}.\\

\underline{Step 4: Proof of \eqref{eq:continuity_wrt_exponent_W2_aggr-diff_main_thm} for general $\mu_0$.} This step is only necessary for the case $\Omega=\Rd$. Let $\mu_0^R = \frac{1}{\int_{B_R} \mu_0(x)\diff x}\, \mu_0 \, \mathds{1}_{B_R}$ and let $\mu_t^R$, $\nu_t^R$ be the corresponding, compactly supported solutions. By $\lambda$-geodesic convexity of functionals $\mathcal{F}$ and $\mathcal{G}$ for some $\lambda$ (implied by \ref{ass:hessian_of_potential}), we have $\mathcal{W}_2(\mu_t^R, \mu_t) \to 0$, $\mathcal{W}_2(\nu_t^R, \nu_t) \to 0$ as $R\to \infty$. Moreover, 
\begin{equation}\label{eq:two_properties_approx_sequence_mu_N_agg-diff}
\sup_{R>1} \sup_{s\in[0,T]} \int_{\Rd} \int_{\Rd} |x|^2 \diff\mu^R_s < \infty, \qquad \sup_{R>1} \sup_{s\in[0,T]} \int_{\Rd} \left|\mu^R_s\right|^m \diff x < \infty.
\end{equation}
The first property follows by \eqref{eq:tail_estimate_main_section_aggregation_diffusion} while the second by \eqref{eq:Lp_norm_sol_A-D_not_integrated_in_time_STEP1} with $k=m-1$.\\

By Step 3, \eqref{eq:continuity_wrt_exponent_W2_aggr-diff_main_thm} is satisfied for each $R$ and we only need to explain how to pass to the limit in the terms $\mathcal{W}_2(\mu_t^R, \nu_t^R)$, $C_R$, $\int_0^t  \int_{\Omega} \left| \nabla V_{\mu}-\nabla V_{\nu} \right|^2 \mu^R_s \diff x  \diff s$, $\int_0^t \int_{\Omega} \left| \nabla (W_{\mu}- W_{\nu})\ast\mu^R_s \right|^2 \mu^R_s \diff x  \diff s$ appearing in \eqref{eq:continuity_wrt_exponent_W2_aggr-diff_main_thm}. Here, $C_R$ is a constant defined by \eqref{eq:constant_continuity_wrt_exponent_W2_PME_main_thm} with $\mu_0$ replaced by $\mu_0^R$ and with $C_{\kappa}$ given now by \eqref{eq:constant_c_kappa_for_aggr_diff_PDE}. First, $\mathcal{W}_2(\mu_t^R, \nu_t^R)\to \mathcal{W}_2(\mu_t, \nu_t)$ by triangle inequality and convergences recalled above. Second, convergence of constant $C_R$ has been analyzed in Step~3 in the proof of Theorem~\ref{thm:PME} (minor differences in the definition of $C_{\kappa}$ do not change the argument).\\

We now discuss the term $\int_0^t  \int_{\Omega} \left| \nabla V_{\mu}-\nabla V_{\nu} \right|^2 \, \mu^R_s \diff x  \diff s$. It is sufficient to prove
\begin{equation}\label{eq:Step5_convergence_bdd_domain_the_whole_space_potential_terms_fixed_time}
  \int_{\Omega} \left| \nabla V_{\mu}-\nabla V_{\nu} \right|^2 \, \mu^R_s \diff x  \to  \int_{\Omega} \left| \nabla V_{\mu}-\nabla V_{\nu} \right|^2 \, \mu_s \diff x
\end{equation}
for all $s\in(0,t)$ and then argue by the dominated convergence for the integral in time (condition \ref{ass_item:growth_conditions_V_W_main_thm} implies that $\left| \nabla V_{\mu}-\nabla V_{\nu} \right|^2 \leq \widetilde{C}(1 + |x|^2)$ for some constant $\widetilde{C}$ and the resulting majorant is integrable in time by \eqref{eq:two_properties_approx_sequence_mu_N_agg-diff}). To see \eqref{eq:Step5_convergence_bdd_domain_the_whole_space_potential_terms_fixed_time}, it is sufficient to apply Lemma \ref{lem:compactness_integral_fn_mu_n_app}. \\

Finally, for the term $\int_0^t \int_{\Omega} \left| \nabla (W_{\mu}- W_{\nu})\ast\mu^R_s \right|^2 \mu^R_s \diff x  \diff s$, we first observe that for a.e. $x\in\Rd$ and $s\in(0,t)$
$$
\left| \nabla (W_{\mu}- W_{\nu})\ast\mu^R_s \right|^2 \to \left| \nabla (W_{\mu}- W_{\nu})\ast\mu_s \right|^2
$$
by writing $\nabla (W_{\mu}- W_{\nu})\ast\mu^R_s = \int_{\Rd} \nabla (W_{\mu}- W_{\nu})(x-y) \diff \mu^R_s(y)$ and using growth conditions \ref{ass_item:growth_conditions_V_W_main_thm} and Lemma \ref{lem:compactness_integral_fn_mu_n_app}. Then, we observe that
\begin{equation}\label{eq:quadratic_tail_nonlocal_term_approx_bounded_set_to_the_whole_space}
\left| \nabla (W_{\mu}- W_{\nu})\ast\mu^R_s \right|^2 \leq \overline{C}\,(1+|x|^2),
\end{equation}
where $\overline{C}$ is a constant independent of $x$ and $s\in(0,t)$. This is proved as in \eqref{eq:estimate_nabla_W_mollifies_mu_Step_2}. Therefore, we can apply Lemma \ref{lem:compactness_integral_fn_mu_n_app} once again to deduce $\int_{\Omega} \left| \nabla (W_{\mu}- W_{\nu})\ast\mu^R_s \right|^2 \mu^R_s \diff x \to \int_{\Omega} \left| \nabla (W_{\mu}- W_{\nu})\ast\mu_s \right|^2 \mu_s \diff x$ for all $s\in(0,t)$. We conclude by the dominated convergence (exploiting \eqref{eq:two_properties_approx_sequence_mu_N_agg-diff} and \eqref{eq:quadratic_tail_nonlocal_term_approx_bounded_set_to_the_whole_space}).

\underline{Step 5: Necessary adaptations for the case $\Omega=R\,\Td$.} We only need to explain how to establish \ref{eq:derivative_via_1st_var_sect:alternative_proof} for $\Omega=R\,\Td$ in Step 2 of the proof above. We need to estimate three slopes $\frac{\diff}{\diff s} \mathcal{G}_1[\gamma_s]$, $\frac{\diff}{\diff s} \mathcal{G}_2[\gamma_s]$ and $\frac{\diff}{\diff s} \mathcal{G}_3[\gamma_s]$ at $s=0$ and $s=1$, where $\mathcal{G}_1$, $\mathcal{G}_2$, $\mathcal{G}_3$ are defined in \eqref{eq:functional_G_aggr-diff_splitting_for_3_functioonals} and $\{\gamma_s\}_{s\in[0,1]}$ is a geodesic connecting $\mu_t$ and $\nu_t$. For $\frac{\diff}{\diff s} \mathcal{G}_1[\gamma_s]$, we argue as in Step 4 of the proof of Theorem \ref{thm:PME} (via Theorem \ref{thm:simplified_for_2-Wasserstein}). For $\frac{\diff}{\diff s} \mathcal{G}_2[\gamma_s]$ observe that we can write
$$
\mathcal{G}_2[\gamma_s]\!= \!\int_{\Omega}   V_{\nu}(x) \diff \gamma_s(x) \!=\! \int_{\Omega}   V_{\nu}((x- s\,\nabla\varphi(x))\mbox{ mod } 2 {R} ) \diff \gamma_0(x)\!=\!\int_{\Omega}   V_{\nu}(x- s\,\nabla\varphi(x) ) \diff \gamma_0(x),
$$
where in the second equality we exploited the definition of geodesics \eqref{eq:geodesic_map_torus} while in the third equality we used periodicity of $V_{\nu}$. By a direct differentiation
$$
\frac{\diff}{\diff s} \mathcal{G}_2[\gamma_s] \Big|_{s=0} = -\int_{\Omega} \nabla V_{\nu}(x)\cdot \nabla \varphi(x) \diff \mu_t(x) = -\int_{\Omega} \nabla \frac{\delta \mathcal{G}_2}{\delta \nu}[\mu_t] \cdot \nabla \varphi(x) \diff \mu_t(x).
$$
The case $s=1$ is analyzed similarly by noting that $\{\gamma_{1-s}\}_{s\in[0,1]}$ is a geodesic connecting $\nu_t$ to $\mu_t$ (see Remark \ref{rem:inverse_geodesic_torus_from_mu1_to_mu0}) and the formula $\frac{\diff}{\diff s} \mathcal{G}_2[\gamma_s] \Big|_{s=1} = -\frac{\diff}{\diff s} \mathcal{G}_2[\gamma_{1-s}] \Big|_{s=0}$. For $\frac{\diff}{\diff s} \mathcal{G}_3[\gamma_s]$ we argue similarly. Using the definition of geodesics \eqref{eq:geodesic_map_torus} and periodicity of $W_{\nu}$ we obtain
$$
\mathcal{G}_3[\gamma_s] = \frac{1}{2} \int_{\Omega} \int_{\Omega}   W_{\nu}\big(x- s\,\nabla\varphi(x) - (y- s\,\nabla\varphi(y)) \big) \diff \gamma_0(x) \diff \gamma_0(y).
$$
We differentiate directly to obtain
\begin{align*}
\frac{\diff}{\diff s} \mathcal{G}_3[\gamma_s] \Big|_{s=0} &= - \frac{1}{2} \int_{\Omega} \int_{\Omega}   \nabla W_{\nu}\big(x- y \big) (\nabla \varphi(x)-\nabla\varphi(y)) \diff \mu_t(x) \diff \mu_t(y)\\
&= -\int_{\Omega} \nabla W_{\nu}\ast \mu_t(x) \cdot \nabla \varphi(x) \diff \mu_t(x) = - \int_{\Omega} \nabla \frac{\delta \mathcal{G}_3}{\delta \nu}[\mu_t] \cdot \nabla \varphi(x) \diff \mu_t(x),
\end{align*}
where in the last step we used $\nabla W_{\nu}(x-y)=-\nabla W_{\nu}(y-x)$. The case $s=1$ follows analogously by the argument above.
\end{proof}

\begin{rem}
We note that in Step 2 when applying Theorem \ref{thm:simplified_for_2-Wasserstein}, it was necessary to decide wheather we use geodesic convexity of $\mathcal{F}$ or $\mathcal{G}$. We selected $\mathcal{G}$ and it is important to emphasize that choosing $\mathcal{F}$ instead would not be possible. Indeed, if we had chosen $\mathcal{F}$, we would have needed to estimate in Step 3 integrals of the form $\int_0^t \int_{\Omega} \nu_s^{2m-3} |\nabla \nu_s|^2 \diff x \diff s$, where $\nu_s$ solves \eqref{eq:aggregation_diffusion_2}. Such an estimate cannot be obtained in full generality. In fact, for solutions to \eqref{eq:aggregation_diffusion_2}, by using \eqref{eq:Lp_norm_sol_A-D_not_integrated_in_time_STEP1_for_2nd_sol} one controls $\int_0^t \int_{\Omega} \nu_s^{\kappa}\, |\nabla \nu_s|^2 \diff x \diff s$ for $\kappa\geq n-2$. The critical value $\kappa = n-2$ corresponds to multiplying \eqref{eq:aggregation_diffusion_2} by $\log\nu_s$. Therefore, one gets the restriction $2m-3\geq n-2$, i.e. $2m-1 \geq n$. This range can be slightly improved by multiplying by $\nu_s^{k}$ for $k \in (-1,0)$; however, this requires estimating integrals of the form $\int_{\Omega} \nu_t^{k+1} \diff x$, which can be controlled for $k$ close to 0 in terms of the mass and suitable moment estimates, see e.g. \cite{lutwak2005crame}.
\end{rem}

\subsection*{Acknowledgements}
JAC and JS were supported by the Advanced Grant Nonlocal-CPD (Nonlocal PDEs for Complex Particle Dynamics: Phase Transitions, Patterns and Synchronization) of the European Research Council Executive Agency (ERC) under the European Union’s Horizon 2020 research and innovation programme (grant agreement No. 883363). PG was supported by the National Science Center (Poland), project UMO-2023/51/B/ST1/01546. We also wish to thank Jakub Woźnicki and André Schlichting for bringing to our attention several references on a similar subject as well as to Fabian Rupp for the observation that assumptions \ref{eq:geodesic_conv_G_sect:alternative_proof} and \ref{eq:derivative_via_1st_var_sect:alternative_proof} can be replaced by the inequality~\eqref{eq:condition_replacing_A2+A_3_by_Fabian}.

\appendix
\section{Semigroup theory}\label{app:semigroup_theory}
We briefly review the theory of semigroups acting on a general metric space $(S,d)$. In the applications in this paper, $(S,d)$ will be a subspace of the space of probability measures $\mathcal{P}_p(\Omega)$ equipped with the Wasserstein metric $\mathcal{W}_p$. The material comes from \cite[Appendix~I]{MR4309603}. 

\begin{Def}\label{def:semigroup_autonomous}
We say that the one-parameter map $\{\mathcal{S}_{t}\}_{t \in [0,T]}$ defines an autonomous semigroup on $(S,d)$ if 
\begin{itemize}
\item $\mathcal{S}_{0} = \mathcal{I}$ where $\mathcal{I}$ is the identity map,
\item $\mathcal{S}_{t} \circ \mathcal{S}_{s} = \mathcal{S}_{t+s}$ for all $t,s, s+t \in [0,T]$.
\end{itemize}
\end{Def}

\begin{Def}\label{def:abs_cont_map_general_metric_space}
We say that the map $\mu_t: [0,T] \ni t \mapsto S$ is absolutely continuous if there exists a function $h: [0,T] \to \R$, $h \in L^1(0,T)$ with $h \geq 0$ such that for all $s,t\in [0,T]$ with $s < t$
$$
d(\mu_t, \mu_s) \leq \int_s^t h(u) \diff u.
$$
\end{Def}

\begin{Def}\label{def:semigroup_Lipschitz-AC-autonomous}
We say that the semigroup $\{\mathcal{S}_{t}\}_{t \in [0,T]}$ is Lipschitz if there exists a constant $K(t)>0$ such that
\begin{equation}\label{eq:lipschitz-AC-estimate-app-aut}
d(\mathcal{S}_{t} x, \mathcal{S}_{t} y) \leq K(t) \, d(x,y).
\end{equation}
We say it is absolutely continuous in time if for all $x \in S$, the map $t \mapsto \mathcal{S}_t x$ is absolutely continuous.
\end{Def}

The main property of Lipschitz semigroups which are absolutely continuous in time is the following estimate proved in \cite[Theorem 2.9]{MR1816648} for Lipschitz maps $\mu_t$ and Lipschitz semigroups $\mathcal{S}_t$ which are also Lipschitz in time (i.e. the map $t\mapsto \mathcal{S}_t x$ is Lipschitz). The estimate was extended in \cite[Prop.~I.9]{MR4309603} to cover the case of absolutely continuous maps $\mu_t$ and nonautonomous semigroups. 

\begin{lem}\label{lem:bressan_estimate_between_map_semigroup_aut}
Let $\{\mathcal{S}_{t}\}_{t \in [0,T]}$ be a Lipschitz, absolutely continuous in time semigroup on $(S,d)$ with the constant $K(t)$ being continuous and let $\mu_t: [0,T]\ni t \mapsto \mu_t \in S$ be an absolutely continuous map. Then, 
$$
d(\mu_t, \mathcal{S}_{t} \mu_0 ) \leq \int_0^t K(t-s)\,\liminf_{h \to 0^+} \frac{d(\mu_{s+h}, \mathcal{S}_{h} \mu_s)}{h} \diff s, 
$$
where $K(t)$ is the constant from~\eqref{eq:lipschitz-AC-estimate-app-aut}.
\end{lem}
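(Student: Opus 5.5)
The plan is the classical Bressan-type "error integral" argument: compare $\mu$ with the semigroup trajectory by sliding the point at which we switch from the curve to the semigroup. Fix $t$ and define
$$
\Phi(s) := d(\mathcal{S}_{t-s}\mu_s, \mathcal{S}_t \mu_0), \qquad s\in[0,t],
$$
so that $\Phi(0)=0$ and $\Phi(t)=d(\mu_t,\mathcal{S}_t\mu_0)$. Since $K$ is continuous, and both $s\mapsto\mu_s$ and $s\mapsto\mathcal{S}_{t-s}x$ are absolutely continuous, I first show that $s\mapsto \mathcal{S}_{t-s}\mu_s$ is continuous, and in fact absolutely continuous. The estimate uses the Lipschitz bound and the absolute continuity of the semigroup:
$$
d(\mathcal{S}_{t-s-h}\mu_{s+h},\mathcal{S}_{t-s}\mu_s)\le K(t-s-h)\,d(\mu_{s+h},\mathcal{S}_h\mu_s),
$$
together with $d(\mu_{s+h},\mathcal{S}_h\mu_s)\le d(\mu_{s+h},\mu_s)+d(\mu_s,\mathcal{S}_h\mu_s)$. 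Here the semigroup property $\mathcal{S}_{t-s}=\mathcal{S}_{t-s-h}\circ\mathcal{S}_h$ is the key identity.

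The core inequality is the following. By the triangle inequality and the display above,
$$
\Phi(s+h)-\Phi(s)\le d(\mathcal{S}_{t-s-h}\mu_{s+h},\mathcal{S}_{t-s}\mu_s)\le K(t-s-h)\, d(\mu_{s+h},\mathcal{S}_h\mu_s).
$$
Dividing by $h$ and using continuity of $K$ gives the Dini-derivative bound
$$
D^+\Phi(s)\le \liminf_{h\to 0^+}\frac{\Phi(s+h)-\Phi(s)}{h}\le K(t-s)\,\liminf_{h\to0^+}\frac{d(\mu_{s+h},\mathcal{S}_h\mu_s)}{h}.
$$
The bound is on the lower right Dini derivative; one could instead take the limsup of the ratio paired with the liminf of $d$, and I will be careful about which is used. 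Then $\Phi(t)-\Phi(0)$ is bounded by the integral of the right-hand side.

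The main obstacle is passing from a pointwise bound on a Dini derivative to the integrated inequality. The function $\Phi$ is only known to be continuous, and the bound holds only in liminf form, so the fundamental theorem of calculus cannot be applied naively. I plan to show that $\Phi$ is absolutely continuous by bounding $|\Phi(s+h)-\Phi(s)|$. For this I use $K$ bounded on $[0,t]$, the absolute continuity of $\mu$ (integrable $h_\mu$), and $d(\mu_s,\mathcal{S}_h\mu_s)$. The last term is the delicate point, since absolute continuity of $\tau\mapsto \mathcal{S}_\tau x$ is assumed pointwise in $x$ and not uniformly. If that fails, I fall back on the standard real-analysis lemma: a continuous function whose lower right Dini derivative is bounded above by an integrable $g$ everywhere except on a countable set satisfies $\Phi(b)-\Phi(a)\le\int_a^b g$. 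To run this, I need measurability and integrability of the right-hand side $g(s)$. Integrability follows from $g(s)\le K(t-s)\,(h_\mu(s)+\text{speed of the semigroup at }\mu_s)$. If $g$ is not integrable, the inequality holds trivially with $+\infty$. Measurability is handled by writing the liminf over rational $h$ using continuity in $h$. This follows the route of \cite[Theorem 2.9]{MR1816648} and \cite[Prop.~I.9]{MR4309603}, which I would cite for the final measure-theoretic step.
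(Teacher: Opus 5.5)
Your function $\Phi(s)=d(\mathcal{S}_{t-s}\mu_s,\mathcal{S}_t\mu_0)$ and your key inequality $\Phi(s+h)-\Phi(s)\le K(t-s-h)\,d(\mu_{s+h},\mathcal{S}_h\mu_s)$, obtained from $\mathcal{S}_{t-s}=\mathcal{S}_{t-s-h}\circ\mathcal{S}_h$ and the Lipschitz bound, are exactly the paper's. The paper closes by asserting that $\Phi$ is absolutely continuous ``by absolute continuity of $\mu_s$ and the semigroup''. It then bounds $\Phi'(s)$ a.e.\ and integrates. You are right that this is the delicate step. The natural estimate is $|\Phi(b)-\Phi(a)|\le K(t-b)\,[d(\mu_b,\mu_a)+d(\mu_a,\mathcal{S}_{b-a}\mu_a)]$, and it gives absolute continuity only if $\sum_i d(\mu_{a_i},\mathcal{S}_{b_i-a_i}\mu_{a_i})\to0$ as $\sum_i(b_i-a_i)\to0$. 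That means equi-absolute continuity of the trajectories issued from the compact set $\{\mu_s\}_{s\in[0,t]}$, which the pointwise definition does not provide. It does hold, for instance, under Bressan's bound $d(\mathcal{S}_\tau x,\mathcal{S}_\sigma x)\le L|\tau-\sigma|$, or whenever the speeds of the trajectories starting at $\mu_s$ are bounded uniformly in $s$.

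Your fallback does not close this gap. The lemma ``$\Phi$ continuous, $D_+\Phi\le g$ off a countable set, $g$ integrable $\Rightarrow \Phi(b)-\Phi(a)\le\int_a^b g$'' holds only if $g$ is finite off that countable set. If $g=+\infty$ is allowed on an uncountable null set, it fails. The Cantor function shows this: take $g=0$ off the Cantor set and $g=+\infty$ on it; then $D_+\Phi\le g$ everywhere and $\int g=0$, yet $\Phi(1)-\Phi(0)=1$.

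In your setting, $g(s)=K(t-s)\liminf_{h\to0^+}d(\mu_{s+h},\mathcal{S}_h\mu_s)/h$ can be $+\infty$ on an uncountable null set even when it is integrable. Already for $\mathcal{S}_t=\mathcal{I}$, $g$ is the lower right metric derivative of $\mu$. An absolutely continuous curve, such as $x\mapsto\int_0^x\sum_n \mathds{1}_{U_n}$ with open $U_n\supset$ Cantor set and $|U_n|<2^{-n}$, has infinite right derivative on the whole Cantor set. So the fallback really needs $\Phi$ to map the exceptional set to a null set (Lusin's property), which is essentially the absolute continuity you were trying to avoid. You must add the uniformity assumption and then argue as the paper does; citing Bressan does not help, since his theorem assumes uniform Lipschitz continuity in time.

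Two smaller points:
\begin{itemize}
\item Your inequality controls the lower right Dini derivative $D_+\Phi$ (a $\liminf$), not $D^+\Phi$ as you wrote; as written, the first inequality in your chain is reversed. Pairing the $\limsup$ of the difference quotient with the $\liminf$ of $d(\mu_{s+h},\mathcal{S}_h\mu_s)/h$ is not legitimate.
\item Left continuity of $s\mapsto\mathcal{S}_{t-s}\mu_s$ is not covered by your one-sided estimate. It follows from the joint continuity $d(\mathcal{S}_\tau x,\mathcal{S}_\sigma y)\le d(\mathcal{S}_\tau x,\mathcal{S}_\sigma x)+K(\sigma)\,d(x,y)$.
\end{itemize}
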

\begin{proof}
We provide a proof since the aforementioned references considered only the case of $K$ being constant. Consider the map $\Phi(s) = d(\mathcal{S}_{t-s} \mu_s, \mathcal{S}_t \mu_0)$. By absolute continuity of $\mu_s$ and the semigroup, the map $\Phi$ is absolutely continuous and differentiable a.e. We compute for $h>0$ by triangle inequality and \eqref{eq:lipschitz-AC-estimate-app-aut}
\begin{align*}
\Phi(s+h) - \Phi(s) &= d(\mathcal{S}_{t-s-h} \mu_{s+h}, \mathcal{S}_t \mu_0)-d(\mathcal{S}_{t-s} \mu_s, \mathcal{S}_t \mu_0) \leq d(\mathcal{S}_{t-s-h} \mu_{s+h}, \mathcal{S}_{t-s} \mu_s)\\ 
&= d(\mathcal{S}_{t-s-h} \mu_{s+h}, \mathcal{S}_{t-s-h} \mathcal{S}_{h}  \mu_s) \leq K(t-s-h)\, d(\mu_{s+h}, \mathcal{S}_{h}  \mu_s). 
\end{align*}
It follows that for a.e. $s$
$$
\Phi'(s)\leq K(t-s)\, \liminf_{h\to 0^+} \frac{d(\mu_{s+h}, \mathcal{S}_{h}  \mu_s)}{h}.
$$
The conclusion follows by integrating the expression above from $s=0$ to $s=t$.
\end{proof}

\section{Continuity equation and optimal transport theory}\label{app:continuity_eq_opt_transport}

\subsection{Continuity equation}
In what follows, $\BL(\Omega)$ is the space of bounded Lipschitz functions on $\Omega$ with the norm defined by \eqref{eq:norm_BL}.

\begin{lem}\label{lem:representation_solution_continuity_equation}
Let ${\bf v}:(0,T)\times \Omega \to \R^d$ be a vector field such that ${\bf v} \in L^1(0,T; \BL(\Omega))$ (in the case $\Omega$ is a bounded domain, assume additionally ${\bf v}(t,\cdot) \cdot \bn \leq 0$ on $\partial \Omega$ for a.e. $t \in (0,T)$). Then, there exists a unique narrowly continuous curve $[0,T] \ni t \mapsto \mu_t \in \mathcal{P}(\Omega)$ such that $\int_0^T \int_{\Omega} |{\bf v}(t,x)| \diff \mu_t(x) \diff t<\infty$ and $\mu_t$ solves 
$$
\partial_t \mu_t + \DIV(\mu_t \, {\bf v}) = 0
$$
in $[0,T]\times\Omega$. In fact, if $X(s,t,x)$ is the flow of ${\bf v}$ as in \eqref{eq:flow_of_the_vf}, $\mu_t$ is explicitly given by
\begin{equation}\label{eq:push-forward-representation}
\mu_t = X(0,t,\cdot)^{\#} \mu_0,
\end{equation}
i.e. for all bounded functions $\psi: \Omega \to \R$ we have
$
\int_{\Omega} \psi(x) \diff \mu_t(x) = \int_{\Omega} \psi(X(0,t,x)) \diff \mu_0(x).
$ 
\end{lem}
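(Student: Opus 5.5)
This is the classical Cauchy--Lipschitz/DiPerna--Lions-type result for smooth fields, so the plan is in three parts. First, establish that the flow $X(s,t,x)$ from \eqref{eq:flow_of_the_vf} is well defined. Then show that $X(0,t,\cdot)^{\#}\mu_0$ is a solution. Finally, prove uniqueness by a duality argument.

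\emph{Existence of the flow.} Since ${\bf v}\in L^1(0,T;\BL(\Omega))$, the map $x\mapsto {\bf v}(t,x)$ is Lipschitz with constant $\Lip({\bf v}(t,\cdot))\in L^1(0,T)$ and bounded by $\|{\bf v}(t,\cdot)\|_{L^\infty}\in L^1(0,T)$. Carathéodory's theorem, with a Grönwall argument using the integrable Lipschitz constant, then gives a unique absolutely continuous solution of $\partial_t X={\bf v}(t,X)$ for every initial point. The resulting flow satisfies:
\begin{itemize}
\item $|X(0,t,x)-X(0,t,y)|\le e^{\int_0^t\Lip({\bf v})}|x-y|$;
\item the group property $X(s,t,X(r,s,x))=X(r,t,x)$.
\end{itemize}
On a bounded domain, the condition ${\bf v}\cdot\bn\le 0$ keeps trajectories in $\overline\Omega$. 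One way to see this: extend ${\bf v}$ to a bounded Lipschitz field on $\Rd$, and check that $t\mapsto \mathrm{dist}(X,\Omega)$ cannot become positive. Near $\partial\Omega$ its derivative is ${\bf v}\cdot\nabla d\le C\,\mathrm{dist}$ by Lipschitz continuity, so Grönwall from $0$ forces it to stay $0$.

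\emph{The push-forward solves the equation.} Define $\mu_t=X(0,t,\cdot)^{\#}\mu_0$. For $\psi\in C^1_c$ (up to the boundary in the Neumann case), differentiate $\int\psi(X(0,t,x))\diff\mu_0$ in $t$; this is legitimate because $t\mapsto X$ is absolutely continuous and $|\nabla\psi|\,|{\bf v}|$ is integrable in time. The result is the weak formulation $\frac{\diff}{\diff t}\int\psi\diff\mu_t=\int\nabla\psi\cdot{\bf v}\diff\mu_t$. Narrow continuity follows from continuity of $t\mapsto X(0,t,x)$ together with dominated convergence. The integrability $\int_0^T\int|{\bf v}|\diff\mu_t\diff t\le\int_0^T\|{\bf v}\|_{L^\infty}<\infty$ is immediate.

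\emph{Uniqueness.} This is the main step. Let $\mu_t$ be any narrowly continuous solution with the same data. Fix a test function $\psi$ and $t_0$, and set $\Psi(t,x)=\psi(X(t,t_0,x))$. Then $\Psi$ solves the backward transport equation $\partial_t\Psi+{\bf v}\cdot\nabla\Psi=0$ and is Lipschitz in $x$, but only absolutely continuous (not $C^1$) in $t$. So I would mollify ${\bf v}$ in $x$ to get ${\bf v}_\eps$, and use $\Psi_\eps$ as a test function in the weak formulation for $\mu_t$. This produces
\[
\int\psi\diff\mu_{t_0}-\int\Psi_\eps(0)\diff\mu_0=\int_0^{t_0}\int({\bf v}-{\bf v}_\eps)\cdot\nabla\Psi_\eps\diff\mu_t\diff t .
\]
The right-hand side tends to zero for the following reasons:
\begin{itemize}
\item $\|\nabla\Psi_\eps\|_\infty$ is bounded uniformly in $\eps$ by the flow Lipschitz estimate;
\item ${\bf v}_\eps\to{\bf v}$ locally uniformly, dominated by $\|{\bf v}(t,\cdot)\|_{L^\infty}$;
\item tightness of $\{\mu_t\}$ handles unbounded $\Omega$.
\end{itemize}
Therefore $\int\psi\diff\mu_{t_0}=\int\psi(X(0,t_0,x))\diff\mu_0$ for all $\psi$, which proves uniqueness and the representation \eqref{eq:push-forward-representation}.

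In the bounded-domain case, the mollified fields must still satisfy ${\bf v}_\eps\cdot\bn\le0$, or at least keep trajectories inside. I expect this to be the delicate point. I would handle it by adding a small inward correction $-\eps\,\nabla d$ near $\partial\Omega$; alternatively, one can simply invoke the superposition principle \cite[Theorem 8.2.1]{MR2129498}, since every solution is concentrated on integral curves of ${\bf v}$, and these are unique by the Lipschitz property.
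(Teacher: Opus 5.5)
Your proposal is correct, but it is organized differently from the paper's proof.

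\textbf{The paper's route.} On $\Rd$ and $\Td$ the paper proves nothing new: it cites \cite[Prop.~8.1.7, 8.1.8]{MR2129498} for existence, uniqueness and the representation \eqref{eq:push-forward-representation}. On a bounded domain its only argument concerns invariance of $\overline\Omega$ under the flow. It replaces ${\bf v}$ by the one-sided time average ${\bf v_\eps}(t,x)=\frac1\eps\int_{t-\eps}^t{\bf v}(s,x)\diff s$, so that ${\bf v_\eps}\cdot\bn\le0$ holds for \emph{every} $t$. It then takes as ``clear'' that the flow of ${\bf v_\eps}$ stays in $\Omega$, and proves $X_\eps\to X$ by Gr\"onwall, with error terms $I_2^\eps,I_3^\eps\to0$.

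\textbf{Your route.} You reprove the AGS uniqueness by duality. You obtain invariance from a Gr\"onwall inequality for $\mathrm{dist}(X(t),\Omega)$ with the integrable coefficient $\Lip({\bf v}(t,\cdot))$. This works directly with the a.e.-in-time boundary condition, with no time regularization. It also proves exactly the Nagumo/Bony--Brezis-type invariance that the paper's reduction leaves implicit when it calls the $X_\eps$ case ``clear''.

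\textbf{Trade-off.} The paper's proof is shorter and isolates a stability estimate for flows under $L^1$-in-time perturbations of the field. Yours is self-contained.

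\textbf{Three minor clarifications.}

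First, mollifying ${\bf v}$ in $x$ does not improve the \emph{time} regularity of $\Psi_\eps$, which contradicts the rationale you state. What it buys is that $\nabla\Psi_\eps$ exists everywhere and is continuous, which matters because $\mu_t$ may be singular. Time is handled separately: the weak formulation extends to test functions that are absolutely continuous in $t$ with uniformly bounded spatial gradients.

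Second, $|{\bf v_\eps}-{\bf v}|\le\eps\,\Lip({\bf v}(t,\cdot))$ globally, so the remainder is $O(\eps)$ and tightness is unnecessary.

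Third, the bounded-domain case is not delicate for your argument. Extend ${\bf v}$ to a bounded Lipschitz field $\tilde{{\bf v}}$ on $\Rd$. The no-flux weak formulation, with test functions in $C^1(\overline\Omega)$, says precisely that $\mu_t$, viewed as a measure on $\Rd$, solves the continuity equation with $\tilde{{\bf v}}$. Your $\Rd$ duality argument then applies verbatim; $\Psi_\eps$ only needs to be $C^1$ near $\overline\Omega$, so it is irrelevant whether mollified trajectories leave $\overline\Omega$. Your distance-function argument then identifies the flow of $\tilde{{\bf v}}$ started in $\overline\Omega$ with $X$. So neither the inward correction nor the superposition principle is needed.
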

\begin{proof}
In the case $\Omega = \Rd$ or $\Omega = \Td$ the proof is classical, see \cite[Prop. 8.1.7, 8.1.8]{MR2129498}. Let us sketch the argument for a bounded domain. In fact, \cite[Prop. 8.1.7, 8.1.8]{MR2129498} implies the formula \eqref{eq:push-forward-representation} and the only question is if the weak condition ${\bf v}(t,\cdot) \cdot \bn \leq 0$ on $\partial \Omega$ for a.e. $t \in (0,T)$ implies that $X(0,t,x) \in \Omega$. This would be clear if the condition ${\bf v}(t,\cdot) \cdot \bn \leq 0$ on $\partial \Omega$ was satisfied for all times. We extend $v$ by zero for $t<0$ and we define ${\bf v_{\eps}} = \frac{1}{\eps} \int_{t-\eps}^t {\bf v}(s,x) \diff s = {\bf v}(\cdot,x) \ast (\frac{1}{\eps} \mathds{1}_{[0,\eps]})$ so that ${\bf v_{\eps}}$ is continuous in time and space and the corresponding flow map $X_{\eps}$ stays in $\Omega$. It is easy to check that
\begin{equation}\label{eq:uniform_BL_mollifier_time}
\| {\bf{v_{\eps}}} \|_{L^1(0,t; \BL(\Omega))} \leq \| {\bf v} \|_{L^1(0,t; \BL(\Omega))}.
\end{equation}
We estimate
\begin{align*}
&|X_{\eps}(0,t,x) - X(0,t,x)| \leq \int_0^t |{\bf v_{\eps}}(s, X_{\eps}(0,s,x)) - {\bf v}(s,X(0,s,x))| \diff s\\
&\leq \int_0^t |{\bf v_{\eps}}(s, X_{\eps}(0,s,x)) - {\bf v_{\eps}}(s,X(0,s,x))| \diff s \\ 
&\phantom{ \leq \, }+ \int_0^t \left|{\bf v_{\eps}}(s, X(0,s,x)) - {\bf v}(\cdot,X(0,\cdot,x)) \ast \left(\frac{1}{\eps} \mathds{1}_{[0,\eps]}\right) (s) \right| \diff s \\
&\phantom{ \leq \, } + \int_0^t \left|{\bf v}(\cdot,X(0,\cdot,x)) \ast \left(\frac{1}{\eps} \mathds{1}_{[0,\eps]}\right) (s) - {\bf v}(s,X(0,s,x)) \right| \diff s =: I_1^{\eps} + I_2^{\eps} + I_3^{\eps}.
\end{align*}
The term $I_1^{\eps}$ can be estimated by using Lipschitz continuity of ${\bf v_{\eps}}$:
$$
|I_1^{\eps}|\leq \int_0^t \Lip({\bf v_{\eps}}(s,\cdot))\, |X_{\eps}(0,s,x)-X(0,s,x)| \diff s.
$$
By Grönwall's inequality and \eqref{eq:uniform_BL_mollifier_time} we obtain
$$
|X_{\eps}(0,t,x) - X(0,t,x)| \leq (I_2^{\eps}+I_3^{\eps})\, e^{\| {\bf v}_{\eps} \|_{L^1(0,t; \BL(\Omega))}} \leq (I_2^{\eps}+I_3^{\eps})\, e^{\| {\bf v} \|_{L^1(0,t; \BL(\Omega))}}.
$$
We now prove that when $\eps \to 0$, $I_2^{\eps}, I_3^{\eps} \to 0$ which implies $|X_{\eps}(0,t,x) - X(0,t,x)| \to 0$ so that $X(0,t,x) \in \overline{\Omega}$ and the proof will be concluded.\\

For $I_3^{\eps}$, we have $I_3^{\eps}\to 0$ because the function $t \mapsto {\bf v}(t,X(0,t,x))$ is integrable on $(0,T)$ (indeed, we can estimate $\int_0^T |{\bf v}(t,X(0,t,x))| \diff t \leq \int_0^T \| {\bf v}(t,\cdot)\|_{L^{\infty}(\Omega)} \diff t$) so its convolution with a mollification kernel converges strongly in $L^1(0,T)$. For $I_2^{\eps}$ we write
\begin{align*}
I_2^{\eps} &\leq \frac{1}{\eps}
\int_0^t \int_{s-\eps}^s \left|{\bf v}(u, X(0,s,x)) - {\bf v}(u,X(0,u,x)) \right| \diff s\\
& \leq \frac{1}{\eps}
\int_0^t \int_{s-\eps}^s \Lip({\bf v}(u,\cdot)) \, |X(0,s,x)-X(0,u,x)| \diff u \diff s
\\
& \leq \frac{1}{\eps}
\int_0^t \left(\int_{s-\eps}^s \Lip({\bf v}(u,\cdot)) \diff u\right) \, \left(\int_{s-\eps}^s \| {\bf v}(\tau,\cdot) \|_{L^{\infty}(\Omega)} \diff \tau \right)  \diff s.
\end{align*}
By convergence of mollifiers and $\Lip({\bf v}(u,\cdot)) \in L^1(0,T)$, $\frac{1}{\eps}\, \int_{s-\eps}^s \Lip({\bf v}(u,\cdot)) \diff u \to \Lip({\bf v}(s,\cdot))$ in $L^1(0,T)$ while $\int_{s-\eps}^s \| {\bf v}(\tau,\cdot) \|_{L^{\infty}(\Omega)} \diff \tau \to 0$ in $L^{\infty}(0,T)$ by absolute continuity of the Lebesgue integral. 
\end{proof}

\begin{lem}\label{lem:p_moment_stays_bounded}
Under the setting of Lemma \ref{lem:representation_solution_continuity_equation}, if $\mu_0 \in \mathcal{P}_p(\Omega)$ then $\mu_t \in \mathcal{P}_p(\Omega)$ for all $t>0$ as long as $\int_0^t \| {\bf v}(\tau,\cdot) \|_{L^{\infty}(\Omega)} \diff \tau < \infty$. 
\end{lem}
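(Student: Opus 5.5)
We would use the push-forward representation from Lemma~\ref{lem:representation_solution_continuity_equation}, namely $\mu_t = X(0,t,\cdot)^{\#}\mu_0$, where $X$ is the flow of ${\bf v}$ defined in \eqref{eq:flow_of_the_vf}. By the same lemma, the flow remains in $\overline{\Omega}$ when $\Omega$ is a bounded domain. The $p$-th moment can then be written as
$$
\int_{\Omega} |x|^p \diff \mu_t(x) = \int_{\Omega} |X(0,t,x)|^p \diff \mu_0(x).
$$
The function $|x|^p$ is unbounded, whereas the lemma is stated for bounded test functions. We would first apply it to the truncations $\min(|x|^p, N)$ and then let $N\to\infty$ by monotone convergence, which justifies the identity above.

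The main step is a pointwise bound on the displacement of the flow. From the integral form of \eqref{eq:flow_of_the_vf},
$$
|X(0,t,x) - x| \leq \int_0^t |{\bf v}(\tau, X(0,\tau,x))| \diff \tau \leq \int_0^t \|{\bf v}(\tau,\cdot)\|_{L^{\infty}(\Omega)} \diff \tau =: M_t,
$$
and $M_t$ is finite by assumption. The elementary inequality $|a+b|^p \leq 2^{p-1}(|a|^p+|b|^p)$ then gives
$$
|X(0,t,x)|^p \leq 2^{p-1}\big(|x|^p + M_t^p\big).
$$
Integrating against the probability measure $\mu_0$, we obtain $\int |x|^p \diff\mu_t \leq 2^{p-1}\big(\int |x|^p \diff\mu_0 + M_t^p\big) < \infty$.

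Mass is preserved and nonnegativity is automatic for a push-forward, so $\mu_t\in\mathcal{P}(\Omega)$, and together with the moment bound this gives $\mu_t \in \mathcal{P}_p(\Omega)$.

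We do not expect a genuine obstacle here. The only point needing care is the extension of the push-forward identity from bounded test functions to $|x|^p$, which the truncation and monotone convergence argument handles. In the periodic case the statement is trivial, since the domain is bounded.
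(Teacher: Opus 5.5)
Your proposal is correct and follows the paper's proof essentially line by line: the push-forward representation $\mu_t = X(0,t,\cdot)^{\#}\mu_0$, the displacement bound $|X(0,t,x)| \leq |x| + \int_0^t \|{\bf v}(\tau,\cdot)\|_{L^{\infty}(\Omega)} \diff \tau$, and integration against $\mu_0$. The only differences are cosmetic: you use the sharper constant $2^{p-1}$ where the paper uses $2^p$, and you write out the truncation and monotone convergence step that the paper leaves implicit.
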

\begin{proof}
We estimate
$$
|X(0,t,x)| \leq |x| + \int_0^t \|{\bf v}(\tau, \cdot)\|_{L^{\infty}(\Omega)} \diff \tau.
$$
Therefore, using that $\mu_0 \in \mathcal{P}_p(\Omega)$
$$
\int_{\Omega}|x|^p \diff \mu_t(x) = \int_{\Omega} |X(0,t,x)|^p \diff \mu_0(x) \leq 2^p \int_{\Omega} |x|^p \diff \mu_0(x) + 
2^p\, \| {\bf v}\|^p_{L^1(0,t; L^{\infty}(\Omega))}.
$$
\end{proof}

\subsection{Wasserstein distance} 

\begin{lem}\label{lem:estimate_Wasserstein_by_pushforwards}
Let $\mu^1, \mu^2 \in \mathcal{P}_p(\Omega)$ be given by $\mu^i = (X^i)^{\#} \mu$ for some $X^i: \Omega\to\Omega$ and $\mu\in \mathcal{P}_p(\Omega)$. Then,
$$
\mathcal{W}_p(\mu^1, \mu^2) \leq \|X^1 - X^2 \|_{L^{\infty}(\Omega)} \, \|\mu\|^{\frac{1}{p}}_{TV}.
$$
\end{lem}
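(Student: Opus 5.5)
The plan is to use the coupling induced by the two maps. Define $\pi := (X^1, X^2)^{\#}\mu$, a nonnegative measure on $\Omega\times\Omega$. For Borel sets $A, B\subset\Omega$ we have
$$
\pi(A\times\Omega) = \mu\big((X^1)^{-1}(A)\big) = \mu^1(A),
\qquad
\pi(\Omega\times B) = \mu^2(B).
$$
So $\pi$ has marginals $\mu^1$ and $\mu^2$ and is admissible in the definition of $\mathcal{W}_p$. Measurability of $X^1, X^2$ is implicit in the statement, since it already uses push-forwards.

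Next, plug $\pi$ into the Kantorovich formulation and use the change of variables for push-forward measures:
$$
\mathcal{W}_p^p(\mu^1,\mu^2) \le \int_{\Omega\times\Omega} |x-y|^p \diff\pi(x,y) = \int_{\Omega} |X^1(x)-X^2(x)|^p \diff\mu(x).
$$
Bound the integrand pointwise by $\|X^1-X^2\|^p_{L^\infty(\Omega)}$. This gives $\mathcal{W}_p^p(\mu^1,\mu^2)\le \|X^1-X^2\|^p_{L^\infty(\Omega)}\,\mu(\Omega)$, and since $\mu\ge0$ we have $\mu(\Omega) = \|\mu\|_{TV}$. Taking $p$-th roots gives the claim.

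The only delicate point is what $L^\infty(\Omega)$ means here. If it is the essential supremum with respect to Lebesgue measure, the pointwise bound holds only $\mu$-a.e. when $\mu \ll \mathcal{L}^d$. In the application in the proof of Theorem~\ref{thm:main}, $\mu_s\in L^1(\Omega)$, so this holds. Moreover, the maps there are flows, hence continuous, and then the essential supremum equals the supremum anyway. I would record this remark.

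I would also remark why the normalization $\|\mu\|_{TV}$ is harmless. For $\mu \in \mathcal{P}_p(\Omega)$ it equals $1$, so the bound is really $\mathcal{W}_p(\mu^1,\mu^2)\le\|X^1-X^2\|_{L^\infty(\Omega)}$. The stated form is simply what appears in the proof of Theorem~\ref{thm:main}.

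There is no real obstacle; the statement is a direct consequence of the definition of $\mathcal{W}_p$.
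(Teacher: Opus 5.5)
Your proof is correct and is essentially the paper's argument: the paper's proof is just the observation that $(X^1,X^2)^{\#}\mu$ is an admissible coupling, and you have written out the marginal check and the change of variables explicitly. Your remark that the $L^{\infty}$ bound is only needed $\mu$-a.e. (automatic in the application, since $\mu_s\in L^1(\Omega)$ and the flow maps are continuous) is a sensible clarification that the paper leaves implicit.
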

\begin{proof}
This is a simple consequence of the fact that the measure $(X^1, X^2)^{\#}\mu$ on $\Omega\times\Omega$ is an admissible coupling in the definition of $\mathcal{W}_p(\mu^1, \mu^2)$. 
\end{proof}
\begin{lem}[rescaled Benamou-Brenier formula]\label{lem:B-B}
Let $\mu, \nu \in \mathcal{P}_p(\Omega)$ and let $h>0$. Then
\begin{equation}
\mathcal{W}_p^p(\mu,\nu) \leq h^{p-1} \int_0^h \int_{\Omega} |{\bf v}|^p \diff \mu_\tau \diff \tau,
\end{equation}
where $\mu_t$ and ${\bf v}$ satisfy the continuity equation $\partial_\tau \mu_{\tau} + \DIV(\mu_{\tau}\,{\bf v})=0$ on $[0,h]\times\Omega$ with $\mu_{\tau} \in \mathcal{P}(\Omega)$ for all $\tau \in [0,h]$, $\mu_0=\mu$, $\mu_h =\nu$, ${\bf v} \in L^1(0,h; \BL(\Omega))$ and ${\bf v}(t,\cdot) \cdot \bn \leq 0$ on $\partial \Omega$ for a.e. $t \in [0,h]$. 
\end{lem}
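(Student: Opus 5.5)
The plan is to build an explicit coupling from the flow of ${\bf v}$. Lemma~\ref{lem:representation_solution_continuity_equation} applies under the stated assumptions (${\bf v}\in L^1(0,h;\BL(\Omega))$, and ${\bf v}\cdot\bn\le 0$ on $\partial\Omega$ if $\Omega$ is bounded). It gives $\mu_\tau = X(0,\tau,\cdot)^{\#}\mu$ with $X(0,\tau,x)\in\overline{\Omega}$, and in particular $\nu=\mu_h = X(0,h,\cdot)^{\#}\mu$. We take the coupling $\pi=(\mathrm{id},X(0,h,\cdot))^{\#}\mu$, which is admissible, and obtain
\[
\mathcal{W}_p^p(\mu,\nu)\le \int_\Omega |X(0,h,x)-x|^p \diff\mu(x).
\]

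Next, for each $x$ we write $X(0,h,x)-x=\int_0^h {\bf v}(\tau,X(0,\tau,x))\diff\tau$ and apply Hölder's (Jensen's) inequality in time:
\[
|X(0,h,x)-x|^p\le h^{p-1}\int_0^h |{\bf v}(\tau,X(0,\tau,x))|^p\diff\tau .
\]
We then integrate against $\mu$, use Tonelli to swap the integrals (the integrand is nonnegative and measurable), and use the push-forward identity $\int_\Omega |{\bf v}(\tau,X(0,\tau,x))|^p\diff\mu(x)=\int_\Omega|{\bf v}(\tau,y)|^p\diff\mu_\tau(y)$. This yields the claim.

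There are two delicate points. The first is the identification of the given curve $\mu_\tau$ with the push-forward $X(0,\tau,\cdot)^{\#}\mu$; this is exactly the uniqueness part of Lemma~\ref{lem:representation_solution_continuity_equation}. It requires the integrability condition $\int_0^h\int_\Omega|{\bf v}|\diff\mu_\tau\diff\tau<\infty$, which holds trivially because ${\bf v}$ is bounded in $L^1(0,h;L^\infty)$. The second is that the push-forward identity was stated only for bounded test functions. We handle this by applying it to $\min(|{\bf v}(\tau,\cdot)|^p,k)$ and letting $k\to\infty$ by monotone convergence. No further obstacles arise.
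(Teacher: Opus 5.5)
Your proposal is correct and follows essentially the same route as the paper. Both use Lemma~\ref{lem:representation_solution_continuity_equation} to write $\mu_\tau = X(0,\tau,\cdot)^{\#}\mu$, apply Jensen/H\"older in time to $X(0,h,x)-x=\int_0^h {\bf v}(\tau,X(0,\tau,x))\,d\tau$, and bound $\mathcal{W}_p^p(\mu,\nu)$ by the cost of the coupling $(\mathrm{id},X(0,h,\cdot))^{\#}\mu$. Your remarks on the integrability condition needed for uniqueness and on extending the push-forward identity to the unbounded integrand $|{\bf v}|^p$ supply details that the paper leaves implicit.
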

\begin{proof}
We follow \cite[Theorem 4.1.3]{MR4331435}. By Lemma \ref{lem:representation_solution_continuity_equation} we have $\mu_{\tau} = X(0,\tau,\cdot)^{\#} \mu_0$. Hence,
\begin{align*}
\int_0^h \int_{\Omega} |{\bf v}|^p &\diff \mu_\tau \diff \tau = 
\int_0^h \int_{\Omega} |{\bf v}(\tau, X(0,\tau,x))|^p \diff \mu_0(x) \diff \tau \\
&= \int_0^h \int_{\Omega} |\partial_{\tau} X(0,\tau,x)|^p \diff \mu_0(x) \diff \tau 
\geq h\,  \int_{\Omega} \left |\frac{1}{h} \int_0^h \partial_{\tau} X(0,\tau,x) \diff \tau \right|^p \diff \mu_0(x)\\
&= h^{1-p} \, \int_{\Omega} |X(0,t,x) - x|^p \diff \mu_0(x) \geq h^{1-p} \,\mathcal{W}^p_p(\mu, \nu).
\end{align*}
\end{proof}

\begin{lem}\label{lem:abs_cont_continuity_equat}
Let $\mu_t \in \mathcal{P}_p(\Omega)$ be a solution of $\partial_t \mu_t + \DIV(\mu_t \, {\bf v}) = 0$ with ${\bf v}$ satisfying assumptions of Lemma \ref{lem:representation_solution_continuity_equation}. Additionally assume that $\int_0^T \int_{\Omega} |{\bf v}|^p \diff \mu_t(x) \diff t < \infty$. Then, $t\mapsto \mu_t$ is absolutely continuous in $(\mathcal{P}_p(\Omega), \mathcal{W}_p)$.
\end{lem}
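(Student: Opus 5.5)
We show that Lemma~\ref{lem:abs_cont_continuity_equat} follows from the flow representation in Lemma~\ref{lem:representation_solution_continuity_equation}, together with the same coupling argument used in the rescaled Benamou--Brenier formula (Lemma~\ref{lem:B-B}) and Lemma~\ref{lem:estimate_Wasserstein_by_pushforwards}. The candidate modulus is
$$
h(u) := \Big(\int_\Omega |{\bf v}(u,x)|^p \diff \mu_u(x)\Big)^{1/p}.
$$
This function lies in $L^p(0,T)\subset L^1(0,T)$ by the extra assumption $\int_0^T \int_{\Omega} |{\bf v}|^p \diff \mu_t \diff t < \infty$. It remains to show that $\mathcal{W}_p(\mu_s,\mu_t)\le \int_s^t h(u)\diff u$ for all $s<t$.

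\emph{Step 1 (coupling).} Fix $s<t$. By Lemma~\ref{lem:representation_solution_continuity_equation} and uniqueness of the flow we have $\mu_t = X(s,t,\cdot)^{\#}\mu_s$; the flow is well-posed by the $L^1(0,T;\BL(\Omega))$ regularity and, on bounded domains, by ${\bf v}\cdot\bn\le 0$. The plan $(\mathrm{id}, X(s,t,\cdot))^{\#}\mu_s$ is an admissible coupling of $\mu_s$ and $\mu_t$, hence
$$
\mathcal{W}_p(\mu_s,\mu_t)\le \Big(\int_\Omega |X(s,t,x)-x|^p \diff\mu_s(x)\Big)^{1/p}.
$$

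\emph{Step 2 (Minkowski's integral inequality).} We write $X(s,t,x)-x=\int_s^t {\bf v}(u,X(s,u,x))\diff u$. Applying Minkowski's integral inequality in $L^p(\mu_s)$ gives
$$
\Big(\int_\Omega |X(s,t,x)-x|^p \diff\mu_s\Big)^{1/p}\le \int_s^t \Big(\int_\Omega |{\bf v}(u,X(s,u,x))|^p\diff\mu_s(x)\Big)^{1/p}\diff u .
$$
The push-forward identity $\mu_u = X(s,u,\cdot)^{\#}\mu_s$ turns the inner integral into $\int_\Omega|{\bf v}(u,y)|^p\diff\mu_u(y)=h(u)^p$, which is the desired estimate.

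\emph{Step 3 (technical points).} Several points need justification. Measurability of $(u,x)\mapsto {\bf v}(u,X(s,u,x))$ holds because ${\bf v}$ is Carathéodory (Lipschitz in $x$) and $X$ is continuous. The push-forward identity is applied to the nonnegative function $|{\bf v}(u,\cdot)|^p$; since ${\bf v}(u,\cdot)$ is bounded, Lemma~\ref{lem:representation_solution_continuity_equation} applies directly. Finally, $\mu_t\in\mathcal{P}_p(\Omega)$ is part of the hypothesis (or follows from Lemma~\ref{lem:p_moment_stays_bounded}). The main obstacle, mild here, is ensuring that the flow from time $s$ rather than time $0$ represents $\mu_t$. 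This follows from uniqueness in Lemma~\ref{lem:representation_solution_continuity_equation} applied on $[s,T]$, or alternatively from the composition property $X(0,t,\cdot)=X(s,t,X(0,s,\cdot))$ of flows.
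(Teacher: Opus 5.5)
Your argument is correct, but it obtains the bound through a different estimate than the paper does.

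\textbf{The paper's route.} The paper does not go back to the flow. It quotes the rescaled Benamou--Brenier inequality of Lemma~\ref{lem:B-B} on $[s,t]$, whose proof applies Jensen's inequality in time along each trajectory. This gives $\mathcal{W}_p^p(\mu_s,\mu_t)\le |t-s|^{p-1}\int_s^t\int_\Omega|{\bf v}|^p\diff\mu_u\diff u$. The paper then linearizes with the elementary inequality $|t-s|^{p-1}A\le |t-s|^p+A^p$. Taking $p$-th roots yields
$\mathcal{W}_p(\mu_s,\mu_t)\le\int_s^t\big(1+\int_\Omega|{\bf v}(u,x)|^p\diff\mu_u(x)\big)\diff u$.
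The case $p=1$ is treated separately.

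\textbf{Your route.} You use the same trajectory coupling $(\mathrm{id},X(s,t,\cdot))^{\#}\mu_s$. Instead of Jensen in time, you apply Minkowski's integral inequality in $L^p(\mu_s)$. This interchanges the time integral with the $L^p$ norm and gives directly the modulus $h(u)=\|{\bf v}(u,\cdot)\|_{L^p(\mu_u)}$.

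\textbf{What each buys.} The paper's proof is shorter, since it reuses a lemma already needed for Theorem~\ref{thm:main}. Its modulus $1+\|{\bf v}(u,\cdot)\|^p_{L^p(\mu_u)}$ is cruder but enough for the qualitative statement. Your proof gives the sharp, homogeneous bound, i.e.\ the natural metric-derivative estimate. It also shows the curve is $p$-absolutely continuous, since $h\in L^p(0,T)$. It handles all $p\ge 1$ uniformly, and its modulus vanishes when ${\bf v}=0$.

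The technical points you flag are the time-shifted representation $\mu_t=X(s,t,\cdot)^{\#}\mu_s$ and the measurability of $(u,x)\mapsto{\bf v}(u,X(s,u,x))$. The paper needs both implicitly as well when it applies Lemma~\ref{lem:B-B} on $[s,t]$, and your justifications are adequate.
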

\begin{proof}
Let $p>1$. By Lemma \ref{lem:B-B}, we have for $t>s$
$$
\mathcal{W}_p^p(\mu_s, \mu_t)\leq |t-s|^{p-1}\, \int_s^t\int_{\Omega} |{\bf v}|^p \diff \mu_u(x) \diff u \leq |t-s|^p + \left(\int_s^t\int_{\Omega} |{\bf v}|^p \diff \mu_u(x) \diff u\right)^p.
$$
Taking the $p$-th root, we see that $\mathcal{W}_p(\mu_s, \mu_t)\leq \int_s^t f(u)\diff u$ for $f \in L^1(0,T)$, concluding the proof. For $p=1$, we have directly from Lemma \ref{lem:B-B} that $\mathcal{W}_1(\mu_s, \mu_t)\leq \int_s^t\int_{\Omega} |{\bf v}| \diff \mu_u(x) \diff u$.
\end{proof}

\begin{lem}\label{lem:compactness_integral_fn_mu_n_app}
Let $\{\mu_n\}_{n\in\N} \subset \mathcal{P}_2(\Rd)\cap L^1(\Rd)$ such that $\sup_{n \in \N} \int_{\Rd} |\mu_n|^p \diff x < \infty$ for some $p>1$ and $\mathcal{W}_2(\mu_n,\mu)\to 0$ when $n\to\infty$ for some $\mu\in \mathcal{P}_2(\Rd)$. Let $f_n:\Rd \to \R$ be such that $|f_n(x)|\leq C(1+|x|^2)$ for a uniform constant $C$ and $f_n(x)\to f(x)$ for a.e. $x\in\Rd$ as $n\to \infty$. Then, $\int_{\Rd} f_n(x) \diff \mu_n(x) \to \int_{\Rd} f(x)\diff \mu(x)$.  
\end{lem}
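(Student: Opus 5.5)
The plan is to split $\int f_n\,d\mu_n - \int f\,d\mu$ into a part on a large ball $B_R$ and a tail part, and to handle the ball part by combining weak $L^p$ convergence with Egorov's theorem.

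\emph{Tails.} Convergence $\mathcal{W}_2(\mu_n,\mu)\to0$ implies narrow convergence together with convergence of second moments. Hence the second moments are uniformly integrable:
$$\sup_n \int_{|x|>R}(1+|x|^2)\,d\mu_n \to 0 \quad \text{as } R\to\infty,$$
and the same holds for $\mu$. Since $|f_n|,|f|\le C(1+|x|^2)$ (the bound for $f$ follows from the a.e. limit), the contributions from $\{|x|>R\}$ are uniformly small in $n$.

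\emph{Ball part: weak limit.} Because $\sup_n\|\mu_n\|_{L^p}<\infty$ with $p>1$, a subsequence converges weakly in $L^p(\Rd)$. Narrow convergence identifies the limit as $\mu$, so $\mu\in L^p$ and the whole sequence $\mu_n\rightharpoonup\mu$ weakly in $L^p$. On $B_R$ I write
$$\int_{B_R} f_n\mu_n - \int_{B_R} f\mu = \int_{B_R}(f_n-f)\mu_n + \int_{B_R} f(\mu_n-\mu).$$
The second term tends to zero because $f\mathds{1}_{B_R}$ is bounded, hence lies in $L^{p'}$, and we can test against it.

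\emph{Ball part: the $(f_n-f)$ term.} This is the main step. Fix $\eta>0$. By Egorov's theorem there is $E\subset B_R$ with $|B_R\setminus E|<\eta$ such that $f_n\to f$ uniformly on $E$; then $\int_E|f_n-f|\mu_n\le \sup_E|f_n-f|\to0$ since $\mu_n$ has mass one. On $B_R\setminus E$ I use $|f_n-f|\le 2C(1+R^2)$ and Hölder:
$$\int_{B_R\setminus E}|f_n-f|\,\mu_n\le 2C(1+R^2)\,\sup_n\|\mu_n\|_{L^p}\,\eta^{1/p'}.$$
This is exactly where the uniform $L^p$ bound is essential: it prevents mass from concentrating on the small bad set $B_R\setminus E$.

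\emph{Conclusion.} First choose $R$ so the tails are at most $\epsilon$ uniformly in $n$. Next choose $\eta$ so the Hölder bound on $B_R\setminus E$ is at most $\epsilon$. Finally let $n\to\infty$ in the remaining terms. The main obstacle is the Egorov/Hölder step; the other steps are routine.
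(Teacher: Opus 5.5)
Your proposal is correct and follows essentially the same route as the paper. Both arguments identify the weak limit of the densities with $\mu$ via narrow convergence (you use reflexivity of $L^p$, the paper uses Dunford--Pettis in $L^1$), apply Egorov on a ball $B_R$ together with H\"older and the uniform $L^p$ bound on the exceptional set, and control the region outside $B_R$ by convergence of second moments. The remaining differences are only bookkeeping: you split $\int_{B_R} f_n\mu_n - \int_{B_R} f\mu$ as $\int_{B_R}(f_n-f)\mu_n + \int_{B_R} f(\mu_n-\mu)$, so the Egorov step needs the $L^p$ bound only for $\mu_n$, and you invoke uniform integrability of $|x|^2$ where the paper bounds the $\limsup$ by $2C\int_{\Rd\setminus B_R}(1+|x|^2)\diff\mu(x)$ and lets $R\to\infty$.
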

\begin{proof}
First, we observe that
\begin{equation}\label{eq:weak_compactness_L1_approx_lemma}
\mu_n \rightharpoonup \mu \mbox{ weakly in } L^1(\Rd), \qquad \int_{\Rd} |x|^2\diff\mu_n \to \int_{\Rd} |x|^2\diff\mu.
\end{equation}
Indeed, $\mathcal{W}_2(\mu_n,\mu)\to 0$ implies narrow convergence of $\mu_n \to \mu$, the second property in \eqref{eq:weak_compactness_L1_approx_lemma} and tightness of the sequence $\{\mu_n\}_{n \in \N}$. The condition $\sup_{n} \int_{\Rd} |\mu_n|^p \diff x < \infty$ together with the Dunford-Pettis theorem imply the weak convergence in $L^1(\Rd)$ and $\int_{\Rd} |\mu|^p \diff x < \infty$.\\

Next, we note that for each $R>0$
\begin{equation}\label{eq:convergence_fn_mun_local_integrals}
\int_{B_R}f_n(x) \diff \mu_n(x) \to \int_{B_R} f(x) \diff \mu(x), \qquad \int_{\Rd \setminus B_R} |x|^2 \diff \mu_n(x)\to \int_{\Rd \setminus B_R} |x|^2 \diff \mu(x).
\end{equation}
Indeed, by Egorov's theorem, for each $\eps >0$ there is a set $A_{\eps}\subset B_R$ such that $|A_{\eps}|\leq \eps$ and $f_n\to f$ uniformly on $B_R\setminus A_{\eps}$. Combined with the weak $L^1$ convergence of $\mu_n$, we obtain the convergence of the integral over $B_R\setminus A_{\eps}$. On $A_{\eps}$, we can estimate
\begin{multline*}
\left|\int_{A_{\eps}}f_n(x) \diff \mu_n(x)\right| + \left|\int_{A_{\eps}}f(x) \diff \mu(x)\right| \leq C\,(1+R^2)\,\left(\int_{A_{\eps}} \diff \mu_n(x) + \int_{A_{\eps}} \diff \mu(x)\right) \\
\leq C\,(1+R^2)\, \sup_{n\in\N} \left(\int_{\Rd} (|\mu_n|^p + |\mu|^p) \diff x\right)^{\frac{1}{p}} \, |A_{\eps}|^{1-\frac{1}{p}} \to 0 \mbox{ as } \eps \to 0,
\end{multline*}
implying the first convergence in \eqref{eq:convergence_fn_mun_local_integrals}. The second convergence in \eqref{eq:convergence_fn_mun_local_integrals} is a direct consequence of \eqref{eq:weak_compactness_L1_approx_lemma} since $|x|^2\,\mathds{1}_{B_R} \in L^{\infty}(\Rd)$. \\

Using \eqref{eq:convergence_fn_mun_local_integrals} we obtain
\begin{align*}
&\limsup_{n\to \infty} \left|\int_{\Rd}f_n(x) \diff \mu_n(x)-\int_{\Rd}f(x) \diff \mu(x)\right| \leq \\ 
& \qquad \qquad \leq \limsup_{n\to \infty} C\,\int_{\Rd\setminus B_R}(1+|x|^2) \diff\mu_n(x) + C\,\int_{\Rd\setminus B_R}(1+|x|^2) \diff\mu(x)\\ 
&\qquad \qquad =
2\,C\,\int_{\Rd\setminus B_R}(1+|x|^2) \diff\mu(x).
\end{align*}
The (RHS) can be made arbitrary small by letting $R\to\infty$ and this concludes the proof.
\end{proof}

\subsection{Optimal transport on the torus}
\label{rem:optimal_transport_periodic_domain}
We discuss briefly the optimal transportation on a periodic domain $R\,\Td$ and the related geometry of the space $(\mathcal{P}_2(R\,\Td), \mathcal{W}_2)$ in this subsection. We recall that we identify $\Td = (-1,1]^d$. On ${R}\, \Td$, the geodesic distance is defined as
\begin{equation}\label{eq:geodesic_distance_periodic_domain}
|[z_1 - z_2]| := \min\{ |z_1-z_2+ 2 {R}\,k|: k\in \mathbb{Z}^d\},  \qquad z_1, z_2 \in {R}\, \Td.
\end{equation}
Let $\mu$, $\nu$ be two absolutely continuous periodic measures on $R\, \Td$. By \cite{MR1711060}, there exists the optimal transport map $T: \Rd \to \Rd$ which maps $\mu$ into $\nu$ understood as measures extended periodically to $\Rd$ (the optimality is understood here in the sense that the support of the optimal coupling is a cyclically monotone set). Moreover, the projection of the map $T(x)$, defined as the unique $T_P(x)\in (-R,R]^d$ such that
\begin{equation}\label{eq:euclidean_periodic_distances}
|[T_P(x)-x]| = |T(x) - x|,
\end{equation}
is the optimal map pushing $\mu$ onto $\nu$ as measures on $R\,\Td$. This is well-defined as $T(x)-x$ is a periodic map \cite[Theorem 1]{MR1711060} so that $T(x+2Rk) = T(x) + 2R\,k$ for all $k\in \mathbb{Z}^d$. The optimality can be seen from the proof in \cite{MR1711060} by \cite[Theorem 1.49]{MR3409718} which gives equivalence between optimality and cyclical monotonicity of the support. In fact, \cite{MR1711060} shows the cyclical monotonicity of the supports of the couplings $(\mathcal{I}, T_P)^{\#}\mu$ and $(\mathcal{I}, T)^{\#}\mu$ on $R\,\Td$ and $\Rd$, respectively (see \cite[Prop.~2]{MR1711060}).\\

From \cite{MR1711060} we know that $T(x)=\partial \psi(x)$ for a convex, lower semicontinuous and proper function $\psi$, where $\partial \psi$ is its subdifferential. Here, $T(x)$ is only understood a.e. More precisely, it is defined at the points where $\nabla \psi$ exists in which case $\partial\psi(x) =\{\nabla\psi(x)\}$. This set is Borel measurable as an intersection of Borel measurable sets \cite[Theorem 3.2]{EvansGariepy} and hence, $\nabla \psi(x)$ is Borel measurable when restricted to $\Rd\setminus \{x: \nabla \psi(x) \mbox{ does not exist}\}$ (or extended to $\Rd$ by a constant). \\

Next, we discuss the geodesics. The Kantorovich potential appearing in the dual formulation of the optimal transport satisfies 
\begin{equation}\label{eq:Kantorovich_potential_periodic_domain}
-\nabla \varphi(x) = T(x)-x 
\end{equation}
(this may be seen from the proof in \cite[Theorem 1.25]{MR3409718}). We remind that $T(x)-x$ is a periodic map \cite[Theorem 1]{MR1711060}. A consequence of \eqref{eq:euclidean_periodic_distances} and \eqref{eq:Kantorovich_potential_periodic_domain} is that
\begin{equation}\label{eq:Wass_dis_periodic_integral_of_Kantorovich}
\mathcal{W}_2^2(\mu,\nu) = \int_{R\,\Td} |\nabla \varphi(x)|^2 \diff \mu(x). 
\end{equation}
It is also a simple calculus exercise to check that the optimal value of $k$ in \eqref{eq:geodesic_distance_periodic_domain} belongs to $\{-1,0,1\}^d$ so that from \eqref{eq:euclidean_periodic_distances} and \eqref{eq:Kantorovich_potential_periodic_domain} we deduce
\begin{equation}\label{eq:bound_displacement_periodic_domain}
|\partial_{x_i} \varphi(x)| = |(T(x) - x)_i| \leq R,
\end{equation}
where $(T(x) - x)_i$ is the $i$-th coordinate of $T(x)-x$. The geodesic between $\mu$ and $\nu$ is defined via
\begin{equation}\label{eq:geodesic_torus}
\gamma_s = T_{s,P}^{\#}\, \mu, \qquad T_{s,P}(x) = \exp_{x}(-s \nabla \varphi(x)), \qquad s \in [0,1],
\end{equation}
where $\exp_x(v)$ is the exponential map in the sense of differential geometry \cite[remark after Corollary 10.10]{MR4886890}. On the flat torus $R\,\Td$, the geodesics are straight lines so that
\begin{equation}\label{eq:geodesic_map_torus}
T_{s,P}(x) = (x - s \nabla \varphi(x))\, \mbox{ mod } 2 {R},
\end{equation}
where the modulo operation is applied on each coordinate. We will also need the following auxiliary map ${T}_s:\Rd \to \Rd$
\begin{equation}\label{eq:geodesic_map_torus_auxilliary}
{T}_s(x) =x - s \nabla \varphi(x).
\end{equation}
\begin{rem}\label{rem:inverse_geodesic_torus_from_mu1_to_mu0}
It is clear from the discussion above that the geodesic connecting $\nu$ and $\mu$ is given by $\gamma_{1-s}$ for $s\in[0,1]$. Indeed, if $S$ is the map transporting $\nu$ into $\mu$ (as measures extended periodically on $\Rd$) and $\phi$ is the Kantorovich potential, we have $-\nabla \phi(x)= S(x)-x$ and $-\nabla \phi(T(x)) = x-T(x)=\nabla \varphi(x)$. Therefore, using that $\nabla \phi$ is periodic,
\begin{align*}
S_{s,P}(T_P(x)) &= ((T(x) \mbox{ mod } 2 {R})-s\,\nabla \phi(T(x))) \mbox{ mod } 2 {R} = (T(x) +s\,\nabla \varphi(x)) \mbox{ mod } 2 {R}\\
&= (x - (1-s)\, \nabla \varphi(x)) \mbox{ mod } 2 {R} = T_{1-s, P}(x)
\end{align*}
and we conclude that $S_{s,P}^{\#}\nu = S_{s,P}^{\#}T_P^{\#} \mu = T_{1-s,P}^{\#}\,\mu$.
\end{rem}
We conclude with two technical lemmas used in the paper, whose main purpose is to interpret the Wasserstein gradient flow of the internal energy on $R\,\Td$ as that on $\Rd$.

\begin{lem}\label{lem:bijection_of_interpolating_curves_periodic}
Let $\mu$ be extended periodically to $\Rd$. Then, $T_s^{\#}\mu$ is absolutely continuous with respect to the Lebesgue measure. For $T_s^{\#}\mu$-a.e. point $y$, there exists unique $x$ such that $T_s(x)= y$. Similarly, when $\mu|_{(-R,R]^d}$ is the restriction of $\mu$ to $(-R,R]^d$, $T_s^{\#}\big(\mu|_{(-R,R]^d}\big)$ is also absolutely continuous and for $T_s^{\#}\big(\mu|_{(-R,R]^d}\big)$-a.e. $y$, there is unique $x\in(-R,R]^d$ such that $T_s(x)=y$.
\end{lem}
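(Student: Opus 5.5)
The plan is to exploit the structure $T_s(x) = x - s\nabla\varphi(x) = (1-s)x + s\,\nabla\psi(x)$, where $\psi$ is the convex function with $T = \nabla\psi$ from the discussion above. Hence $T_s = \nabla\psi_s$ with $\psi_s(x) = (1-s)\frac{|x|^2}{2} + s\,\psi(x)$.

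\textbf{Injectivity.} For $s\in[0,1)$ the function $\psi_s$ is $(1-s)$-strongly convex, so $\nabla\psi_s$ is strongly monotone on its domain of differentiability:
$$
(T_s(x)-T_s(x'))\cdot(x-x') \geq (1-s)|x-x'|^2 .
$$
Therefore $T_s(x) = T_s(x')$ forces $x=x'$. This gives uniqueness of the preimage for every $y$ in the image (not merely a.e.) whenever $s<1$. It works for both the periodic extension and the restriction to $(-R,R]^d$, since injectivity is a pointwise property of $T_s$ on $\Rd$. For $s=1$ we have $T_1 = T$ and $T^{\#}\mu=\nu$. Uniqueness $\nu$-a.e. follows from the existence of the inverse optimal map $S$ with $S\circ T = \mathrm{id}$ $\mu$-a.e. (periodic version from \cite{MR1711060}, using that $\nu$ is absolutely continuous). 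Alternatively, one invokes the standard fact that the set of $y$ with two $\partial\psi$-preimages is contained in the nondifferentiability set of the convex conjugate $\psi^*$, which is Lebesgue-null and hence $\nu$-null.

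\textbf{Absolute continuity.} Here I follow McCann's argument \cite[Theorem 5.30]{MR1964483} / \cite{MR1964483}, localized. By Aleksandrov's theorem, $\psi$ is twice differentiable a.e. Restricted to a set $E$ of full $\mu$-measure where $\nabla\psi$ exists and Aleksandrov-differentiability holds, the map $T_s$ is injective (previous step). Moreover, $\det D T_s = \det((1-s)I + sD^2_A\psi) \geq (1-s)^d>0$ for $s<1$. Let $N$ be Lebesgue-null and set $A = T_s^{-1}(N)\cap E$. To show $\mu(A)=0$, use the inverse map $T_s^{-1}$ on $T_s(E)$, which is $\frac{1}{1-s}$-Lipschitz by strong monotonicity. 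It therefore maps the null set $N$ onto a null set, and since $\mu\ll\mathcal{L}^d$, $\mu(A)=0$. Thus $T_s^{\#}\mu\ll\mathcal{L}^d$.

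The Lipschitz inverse avoids any area formula and applies equally to the periodic (locally finite) extension and to $\mu|_{(-R,R]^d}$. For local finiteness one notes that, by \eqref{eq:bound_displacement_periodic_domain}, $|T_s(x)-x|\le \sqrt d R$, so preimages of bounded sets are bounded. For $s=1$, absolute continuity is just the assumption that $\nu$ (respectively its restricted image, which is dominated by $\nu$) is absolutely continuous.

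\textbf{Main obstacle.} The delicate point is measurability and the domain issue: $T$ is only defined off the nondifferentiability set of $\psi$. I will fix a Borel set $E$ of full Lebesgue measure, as discussed after \eqref{eq:Kantorovich_potential_periodic_domain}, and make every ``unique $x$'' statement relative to $x\in E$. The a.e. qualifier in $y$ then absorbs images of the $\mu$-null complement.
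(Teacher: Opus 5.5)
Your proof is correct and is essentially the paper's argument. The $(1-s)$-strong monotonicity of $\partial\psi_s$ gives a $\frac{1}{1-s}$-Lipschitz inverse, which maps null sets to null sets; the paper calls this inverse $\partial\psi_s^*$ and uses $T_s^{\#}\mu(B)=\mu(\partial\psi_s^*(B))$. Your direct injectivity for $s<1$ replaces the paper's $A_x\times A_y$ bookkeeping, and the Aleksandrov/Jacobian remark is unnecessary. For $s=1$, rely on your second alternative (differentiability of $\psi^*$ on a $\nu$-full set), which is what the paper's argument amounts to. The first alternative is insufficient on its own: $S\circ T=\mathrm{id}$ $\mu$-a.e. does not exclude extra preimages lying in the $\mu$-null exceptional set.
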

\begin{proof}
We write $T_s(x) = s\, \partial\psi(x) +(1-s)\,x = \partial \psi_s(x)$ for $\psi_s(x):= s\,\psi(x) + (1-s)\,\frac{|x|^2}{2}$ and we write $\psi^*_s$ for the Legendre transform of $\psi_s$. We first observe that $T_s^{\#}\mu$ is absolutely continuous with respect to the Lebesgue measure. For $s=1$, this follows from the fact that $\nu=T_1^{\#}\mu$ is absolutely continuous. For $s\in(0,1)$, by the reasoning in \cite[Lemma 4.6(i)]{MR1964483}, $T_s^{\#}\mu(B) = \mu(\partial\psi^*_s(B))$ for all sets $B$. We will show that $\partial\psi^*_s$ is a single-valued, Lipschitz continuous map which implies that $\partial\psi^*_s(B)$ is a set of measure zero whenever $B$ is. To this end, for any $y_1 \in \partial \psi_s(x_1)$, $y_2 \in \partial \psi_s(x_2)$ we have
$$
y_1 - y_2 \in s (\partial \psi(x_1)-\partial \psi(x_2)) + (1-s)\,(x_1-x_2).
$$
Multiplying by $x_1-x_2$ and noting that $(\partial \psi(x_1)-\partial \psi(x_2))\,(x_1-x_2) \geq 0$ we obtain
$$
|x_1 - x_2| \leq \frac{1}{1-s}\, |y_1-y_2|.
$$
Since $y_1 \in \partial \psi_s(x_1) \iff x_1 \in \partial \psi_s^*(y_1)$, this implies that $\partial\psi_s^*$ is single-valued. Moreover, it is Lipschitz continuous with constant $\frac{1}{1-s}$.\\

Now, let $A_x=\{x: \nabla \psi_s(x) \mbox{ exists} \}$, $A_y=\{y: \nabla \psi_s^*(y) \mbox{ exists} \}$ which are Borel sets of full (Lebesgue) measure. Note that $(A_x \times A_y)$ is a set of full $(\mathcal{I}, T_s)^{\#}\mu$ measure because
$$
\mathbb{R}^{2d} \setminus (A_x \times A_y) = ((\Rd \setminus A_x) \times \Rd) \cup (\Rd \times (\Rd \setminus A_y))
$$
and we know that both $\mu$ and $T_s^{\#}\mu$ are absolutely continuous. Hence, for $(\mathcal{I}, T_s)^{\#}\mu$-a.e. $(x,y) \in (A_x \times A_y)$ we have $y=\nabla \psi_s(x)$ (just look at the measure of $\{ y\neq\nabla \psi_s(x)\}$ which is Borel measurable) so that $y \in \partial\psi_s(x)$ and $x\in \partial\psi_s^*(y)$. Hence, $x=\nabla \psi^*_s(y)$ and finally $y=\nabla \psi_s(x) = \nabla \psi_s(\nabla \psi^*_s(y))$ for $(\mathcal{I}, T_s)^{\#}\mu$-a.e. $(x,y) \in A_x \times A_y$. Integrating out the $x$ variable (this uses that $A_x$ is of full measure), $y=\nabla \psi_s(\nabla \psi^*_s(y))$ for $T_s^{\#}\mu$-a.e. $y$ (this uses that $A_y$ is of full measure).\\

Concerning the statement for $\mu|_{(-R,R]^d}$, one only needs to notice that we still have $x=\nabla \psi_s^*(y)$ for $(\mathcal{I}, T_s)^{\#}\big(\mu|_{(-R,R]^d}\big)$-a.e. $(x,y)$ and since $(\Rd\setminus(-R,R]^d) \times\Rd$ is the null set, we can integrate out $x$ in the condition $\nabla \psi_s^*(y)\in (-R,R]^d$.
\end{proof}
\begin{lem}\label{lem:Lp_norm_of_geodesic_from_torus_to_Rd}
Let $\mu \in L^p(R\, \Td)$. Let $\gamma_s$ be defined as in \eqref{eq:geodesic_torus} (it is a measure on $R\, \Td$) and let $\widetilde{\gamma}_s = {T}^{\#}_s \big(\mu|_{(-R,R]^d}\big)$ be a measure on $\R^d$ (here, $\mu$ is a measure extended periodically to $\Rd$ and $\mu|_{(-R,R]^d}$ is its restrction to $(-R,R]^d$). Then,
$$
\int_{R\, \Td} |\gamma_s|^p \diff x = \int_{\Rd} |\widetilde{\gamma}_s|^p \diff x.  
$$
\end{lem}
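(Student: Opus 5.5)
The identity will follow from a change of variables. The idea is to view $\gamma_s$ as the projection modulo $2R$ of $\widetilde{\gamma}_s$, and to show that this projection is injective $\widetilde{\gamma}_s$-almost everywhere. If so, the densities coincide pointwise after the identification, and the two $L^p$ norms agree.

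\textbf{Step 1 (projection).} Let $P:\Rd\to(-R,R]^d$ be the coordinatewise reduction modulo $2R$. By \eqref{eq:geodesic_map_torus} and \eqref{eq:geodesic_map_torus_auxilliary}, $T_{s,P}=P\circ T_s$ on $(-R,R]^d$. Hence
$$\gamma_s=T_{s,P}^{\#}\mu=P^{\#}T_s^{\#}\big(\mu|_{(-R,R]^d}\big)=P^{\#}\widetilde{\gamma}_s.$$
By Lemma~\ref{lem:bijection_of_interpolating_curves_periodic}, $\widetilde{\gamma}_s$ has a density. Consequently the density of $\gamma_s$ is
$$\gamma_s(z)=\sum_{k\in\mathbb{Z}^d}\widetilde{\gamma}_s(z+2Rk),$$
and only finitely many terms are nonzero, since $\widetilde\gamma_s$ is supported in $(-2R,2R]^d$ by \eqref{eq:bound_displacement_periodic_domain}.

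\textbf{Step 2 (disjointness of translates).} This is the main obstacle. I will show that for a.e.\ $z$ at most one term in the sum above is nonzero. Write $\widehat{\gamma}_s:=T_s^{\#}\mu$ for the geodesic interpolant of the periodically extended measure. Periodicity of $T(x)-x$ gives $T_s(x+2Rk)=T_s(x)+2Rk$. Therefore
$$\widehat{\gamma}_s=\sum_k \widetilde{\gamma}_s(\cdot-2Rk),$$
a sum of nonnegative densities.

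Now suppose that on a set $E$ of positive measure two translates are both positive, say $\widetilde{\gamma}_s(y)>0$ and $\widetilde{\gamma}_s(y-2Rk)>0$ with $k\neq0$. Then each $y\in E$ has two preimages under $T_s$ in the support of $\mu$: one $x\in(-R,R]^d$, and one $x'+2Rk$ with $x'\in(-R,R]^d$. These are distinct points of $\Rd$. Moreover $E$ has positive $\widehat{\gamma}_s$-measure. This contradicts the uniqueness of $T_s^{\#}\mu$-a.e.\ preimages in Lemma~\ref{lem:bijection_of_interpolating_curves_periodic}.

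To make this rigorous, I would define the Borel set of $y$ admitting preimages in two different fundamental cells, $A_1=T_s(D_1\cap(-R,R]^d)$ and $A_2=T_s(D_2\cap((-R,R]^d+2Rk))$, where $D_1,D_2$ are full-$\mu$-measure sets on which $T_s$ is defined. One then has to check that positive densities on $E$ force $\widehat{\gamma}_s(A_1\cap A_2)>0$. The delicate points are measurability of images of Borel sets under $T_s$ and working with representatives of the densities. I would handle both through the Lipschitz inverse $\nabla\psi_s^*$ constructed in that lemma: $y\mapsto\nabla\psi_s^*(y)$ identifies the unique preimage, so $\widetilde{\gamma}_s(\cdot-2Rk)$ is concentrated on $\{y:\nabla\psi_s^*(y)\in(-R,R]^d+2Rk\}$. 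These sets are disjoint for different $k$, which settles the issue cleanly.

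\textbf{Step 3 (conclusion).} By Step 2, for a.e.\ $z\in(-R,R]^d$ at most one term of the sum is nonzero. Hence
$$|\gamma_s(z)|^p=\sum_k|\widetilde{\gamma}_s(z+2Rk)|^p \quad\text{for a.e. } z.$$
Integrating over $(-R,R]^d$ and unfolding the sum over the translated cells, which tile $\Rd$, yields $\int_{R\,\Td}|\gamma_s|^p\diff x=\int_{\Rd}|\widetilde{\gamma}_s|^p\diff x$, as claimed.
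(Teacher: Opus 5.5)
Your proof is correct and follows the paper's argument. Both write $\gamma_s$ as the modulo-$2R$ push-forward of $\widetilde{\gamma}_s$, and both use the a.e.\ uniqueness of preimages from Lemma~\ref{lem:bijection_of_interpolating_curves_periodic} to show that the translates $\widetilde{\gamma}_s(\cdot+2Rk)$ have a.e.\ disjoint supports, so $|\gamma_s|^p$ equals the sum of the $p$-th powers of these translates. Your sets $\{y:\nabla\psi_s^*(y)\in(-R,R]^d+2Rk\}$ are a slightly cleaner Borel encoding of the paper's sets $A_i$. One small caveat: at $s=1$ the inverse $\nabla\psi^*$ is only a.e.\ defined rather than Lipschitz, but the same concentration argument applies.
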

\begin{proof}
We write $Q_0 = (-R, R]^d$ and we let $\{Q_i\}_{i=1,...,N}$ to be the set of disjoint cubes of side length $2R$ around $\Omega$, parallel to $Q_0$. Let $k_i$ be the unique vector in $\mathbb{Z}^d$ such that for all $y\in Q_i$, $y + 2 R k_i \in Q_0$ (we set $k_0=0$). We consider sets 
$$
A_i =\{y \in Q_0: {T}_s(x_y) = y \mbox{ for some } x_y \in Q_i\}.
$$
The idea is that in order to compute the integral $\int_{R\, \Td} |\gamma_s|^p \diff x$, we need to trace where the support of $\gamma_s$ comes from. The sets $A_i$ are well-defined and disjoint by Lemma \ref{lem:bijection_of_interpolating_curves_periodic} in the sense that for ${T}^{\#}_s\mu$-a.e. $y$ there is only one $x_y$ such that $T_s(x_y)=y$. By comparing formulas \eqref{eq:geodesic_map_torus} and \eqref{eq:geodesic_map_torus_auxilliary}, we deduce that $\gamma_s$ is a push-forward of $\widetilde{\gamma}_s$ under the modulo $2\,R$ map. We will prove
\begin{equation}\label{eq:splitting_support_periodic_measures_Aj}
\gamma_s(y)= \sum_{j=0}^N \widetilde{\gamma}_s(y + 2R k_j) = \widetilde{\gamma}_s(y +  2R k_i), \qquad \mbox{ for } y \in A_i.
\end{equation}
To see the first equality, we observe that by \eqref{eq:bound_displacement_periodic_domain}, we only need to consider cubes which are neighbours to $Q_0$. To see the second equality, we will prove $T_s^{-1}(y +  2R k_j)\notin Q_0$ for $j\neq i$. Indeed, if there was $x\in Q_0$ such that $T_s(x)= y+2R k_j$ for $y\in Q_0$ and some $j\neq i$, we would have $T_s(x-2Rk_j) = y$ and since $x-2Rk_j \in Q_j$, this would imply $y \in A_j$. The latter is impossible since $\{A_j\}_{j=0,...,N}$ are disjoint.\\

Next, we will cover the support of $\widetilde{\gamma}_s$ by proving that for $\widetilde{\gamma}_s$-a.e. $y$, there is $j$ such that $y\in \{A_j + 2R k_j\}$ which are disjoint sets. Indeed, let $E:= \{ y :\exists {x_y \in Q_0} \mbox{ such that } {T}_s(x_y) =y\}$ which is a set of full $\widetilde{\gamma}_s$ measure by Lemma \ref{lem:bijection_of_interpolating_curves_periodic}. Let $Q^a_j$ be an antipodal cube to $Q_j$ with respect to the origin. Note that for $y\in Q^a_j$, $y-2Rk_j\in Q_0$. Therefore, by \eqref{eq:bound_displacement_periodic_domain},
\begin{equation}\label{eq:decomposition_support_gamma_s_tilde}
E =\bigcup_{j=0}^N \{ y\in Q^a_j :\exists {x_y \in Q_0}  \mbox{ such that } {T}_s(x_y) =y\} =: \bigcup_{j=0}^N B_j,
\end{equation}
The decomposition \eqref{eq:decomposition_support_gamma_s_tilde} also implies disjointness of $\{B_j\}_{j=0,...,N}$. Since $y=y-2Rk_j +2Rk_j$, we can write
\begin{align*}
B_j &= \{ y\in Q_0 :\exists {x_y \in Q_0}  \mbox{ such that } {T}_s(x_y) =y +2Rk_j\} +2Rk_j \\
&= \{ y\in Q_0 :\exists {x_y \in Q_0}  \mbox{ such that } {T}_s(x_y-2Rk_j) =y\} +2Rk_j\\
&= \{ y\in Q_0 :\exists {x_y \in Q_j}  \mbox{ such that } {T}_s(x_y) =y\} +2Rk_j =  A_j + 2Rk_j.
\end{align*}
We can now conclude the proof of the lemma by applying \eqref{eq:splitting_support_periodic_measures_Aj} and the fact that the disjoint sets $\{A_j +2Rk_j\}$ cover the set $E$ given by \eqref{eq:decomposition_support_gamma_s_tilde}
$$
\int_{Q_0} |\gamma_s(y)|^p \diff y= \sum_{j=0}^N \int_{A_j} |\widetilde{\gamma}_s(y + 2R k_j) |^p \diff y = \sum_{j=0}^N\int_{A_j +2Rk_j} |\widetilde{\gamma}_s(y) |^p \diff y =\int_{E} |\widetilde{\gamma}_s(y)|^p \diff y.
$$
Since $E$ is a set of full $\widetilde{\gamma}_s$ measure by Lemma \ref{lem:bijection_of_interpolating_curves_periodic}, $\int_{E} |\widetilde{\gamma}_s(y)|^p \diff y = \int_{\Rd} |\widetilde{\gamma}_s(y)|^p \diff y$ and the proof is concluded.
\end{proof}
\section{Rate of convergence of the porous medium equation towards the heat equation}\label{app:rate_PME_nto1}

\begin{thm}
Let $\Omega$ be $\Rd$, a torus or a bounded smooth domain. Let $n >1$ and let $\mu$ and $\nu$ be the solutions to
\begin{equation}\label{eq:app:intro_PDE_rate_nto1}
\partial_t \mu = \Delta \mu, \qquad \qquad \partial_t \nu=\Delta \nu^n,
\end{equation}
on $\Omega$ with the same initial condition $\mu_0=\nu_0 \in \mathcal{P}_2(\Omega)\cap L^{\alpha}(\Omega)$ for some $\alpha>1$. Moreover, if $\Omega$ is a bounded domain, we equip \eqref{eq:app:intro_PDE_rate_nto1} with the Neumann boundary conditions. Then, for all $n \in [1, \alpha)$
$$
\mathcal{W}_2^2(\mu_t, \nu_t) \leq 
C(t,\alpha,n,\mu_0)\, {|n-1|},
$$
where the constant $C(t, \alpha,n, \mu_0)$ is defined by
$$
C(t, \alpha,n, \mu_0)\! =\!  4\,t\left(\frac{4}{(\alpha-n)^2} \|\mu_0\|_{L^{\alpha}(\Omega)}^{\alpha}\! +\!  \int_{\Omega} \mu_0\, |x|^2 \diff x \!+\!d\, t \left(2\!+\! \|\mu_0\|_{L^{\alpha}(\Omega)}^{\alpha}\right) \!+ \! \frac{16}{e^2} \int_{\Omega} e^{-\frac{|x|}{2}} \diff x 
\right)
$$
for $\Omega=\Rd$ or $\Omega$ being a periodic domain and
$$
C(t, \alpha,n, \mu_0) =  4\,t\left(\frac{4}{(\alpha-n)^2} \|\mu_0\|_{L^{\alpha}(\Omega)}^{\alpha}\! + R^2 + \! \frac{16}{e^2} \int_{\Omega} e^{-\frac{|x|}{2}} \diff x 
\right)
$$
if $\Omega$ is a bounded domain with $\sup_{x\in\Omega} |x| \leq R$.
\end{thm}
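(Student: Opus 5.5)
The plan is to bypass Theorems \ref{thm:simplified_for_2-Wasserstein} and \ref{thm:main} and use the Evolutionary Variational Inequalities (EVI) of both flows. Let $\mathcal{H}[\rho]=\int_\Omega \rho\log\rho \diff x$ and $\mathcal{G}[\rho]=\frac{1}{n-1}\int_\Omega(\rho^n-\rho)\diff x$. The $-\rho$ term only adds a constant on probability measures, so it changes neither the flow nor the EVI. Both functionals are $0$-geodesically convex, with $\mu_t$ the EVI solution for $\mathcal{H}$ and $\nu_t$ the EVI solution for $\mathcal{G}$. For every $\sigma$ in the respective domains,
\[
\tfrac12\tfrac{\diff}{\diff t}\mathcal{W}_2^2(\mu_t,\sigma)\le \mathcal{H}[\sigma]-\mathcal{H}[\mu_t],\qquad \tfrac12\tfrac{\diff}{\diff t}\mathcal{W}_2^2(\nu_t,\sigma)\le \mathcal{G}[\sigma]-\mathcal{G}[\nu_t].
\]
By the standard doubling-of-variables argument (as in the uniqueness proof for EVI solutions), this yields for a.e. $t$
\[
\tfrac12\tfrac{\diff}{\diff t}\mathcal{W}_2^2(\mu_t,\nu_t)\le \big(\mathcal{G}-\mathcal{H}\big)[\mu_t]-\big(\mathcal{G}-\mathcal{H}\big)[\nu_t].
\]
This needs $\mathcal{H}[\nu_t]$ and $\mathcal{G}[\mu_t]$ finite, which holds since $\mu_t,\nu_t\in L^\alpha\cap\mathcal{P}_2$ and $n<\alpha$. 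If needed, I would first run the argument for regularized data and pass to the limit using the $\mathcal{W}_2$-contractivity of both flows.

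The key algebraic observation is that, with $s=\log x$ and $f(s)=\frac{e^{(n-1)s}-1}{n-1}-s$, the integrand of $\mathcal{G}-\mathcal{H}$ is $x f(\log x)$. Since $f$ is convex with $f(0)=f'(0)=0$, we have $0\le f(s)\le \frac{n-1}{2}s^2\max(1,e^{(n-1)s})$. Hence the term $-(\mathcal{G}-\mathcal{H})[\nu_t]\le 0$ can be dropped, and
\[
\tfrac{\diff}{\diff t}\mathcal{W}_2^2(\mu_t,\nu_t)\le (n-1)\int_\Omega \mu_t|\log\mu_t|^2\big(1+\mu_t^{n-1}\big)\diff x .
\]
The factor $n-1$ is the gain: only the smooth heat solution $\mu_t$ appears on the right. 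Integrating in time from $\mathcal{W}_2(\mu_0,\nu_0)=0$ gives the claim with $C$ equal to $t$ times a uniform bound for this integral.

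It remains to bound $\int \mu_t|\log\mu_t|^2(1+\mu_t^{n-1})$ uniformly in $t$, which I expect to be the main (though elementary) obstacle. I will split into three regions.
\begin{itemize}
\item On $\{\mu_t\ge 1\}$: use $x^{n}\log^2x\le \frac{4}{e^2(\alpha-n)^2}x^\alpha$, together with the fact that $\|\mu_t\|_{L^\alpha}\le\|\mu_0\|_{L^\alpha}$ for the heat equation. This produces the $\frac{4}{(\alpha-n)^2}\|\mu_0\|_{L^\alpha}^\alpha$ term.
\item On $\{e^{-|x|}\le \mu_t<1\}$: $|\log\mu_t|\le |x|$, giving the second moment $\int\mu_t|x|^2\le\int\mu_0|x|^2+2dt$. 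On a bounded domain this is replaced by $R^2$. The $\|\mu_0\|_{L^\alpha}^\alpha$ contributions in the $d\,t$ term come from crudely bounding the $\mu^{n-1}$ factor.
\item On $\{\mu_t<e^{-|x|}\}$: use $y|\log y|^2\le \frac{16}{e^2}\sqrt y\le\frac{16}{e^2}e^{-|x|/2}$. This gives the $\frac{16}{e^2}\int e^{-|x|/2}$ term.
\end{itemize}
Collecting the three pieces and tracking the numerical factors (the factor $4t$ absorbs the constants from the doubling step and the splitting) should reproduce the stated constant $C(t,\alpha,n,\mu_0)$ in each geometric setting.
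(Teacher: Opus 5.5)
Your argument has the same structure as the paper's proof: EVI for both flows, the doubling-of-variables inequality, a second-order Taylor bound on $\frac{y^{n-1}-1}{n-1}-\log y$, and the same three-region splitting. It adds one genuine refinement.

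\emph{What the paper does.} It bounds both $|\mathcal{G}-\mathcal{H}|[\mu_t]$ and $|\mathcal{G}-\mathcal{H}|[\nu_t]$ with the same Taylor estimate. It therefore needs the second moment of the porous-medium solution, $\int_\Omega \nu_t|x|^2\diff x\le\int_\Omega\mu_0|x|^2\diff x+2d\,t\int_\Omega\mu_0^n\diff x$, together with $\int_\Omega\mu_0^n\diff x\le 1+\|\mu_0\|^\alpha_{L^\alpha(\Omega)}$. That is where the term $d\,t\,\|\mu_0\|^\alpha_{L^\alpha(\Omega)}$ in $C$ comes from. It does not come from a crude treatment of the factor $\mu_t^{n-1}$, as you guessed. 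On $\{e^{-|x|}\le\mu_t<1\}$ one has $\mu_t^{n-1}\le 1$, so your splitting never produces this term.

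\emph{What your refinement buys.} The inequality $\frac{y^{n-1}-1}{n-1}\ge\log y$, i.e.\ $(\mathcal{G}-\mathcal{H})[\rho]\ge 0$, lets you discard the $\nu_t$ term entirely. Only heat-equation information then enters, and you obtain
$$\mathcal{W}_2^2(\mu_t,\nu_t)\le (n-1)\,t\Big(\frac{8}{e^2(\alpha-n)^2}\|\mu_0\|^\alpha_{L^\alpha(\Omega)}+2\int_\Omega\mu_0|x|^2\diff x+4d\,t+\frac{32}{e^2}\int_\Omega e^{-\frac{|x|}{2}}\diff x\Big)$$
on $\Rd$ or the torus. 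On a bounded domain the second-moment and $d\,t$ terms are replaced by $2R^2$. In both settings this is smaller than $C(t,\alpha,n,\mu_0)\,|n-1|$.

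So your final remark that the bookkeeping ``should reproduce the stated constant'' is inaccurate: you do not reproduce it, you improve it. The theorem follows a fortiori.
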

\begin{proof}
The PDEs \eqref{eq:app:intro_PDE_rate_nto1} are 2-Wasserstein gradient flows of the functionals $\mathcal{F}[\mu]= \int_{\Omega} \mu \log \mu \diff x$ and $\mathcal{G}[\nu] = \int_{\Omega} \frac{1}{n-1} \nu\,(\nu^{n-1}-1) \diff x$, respectively. Both functionals are geodesically convex so $\mu$ and $\nu$ satisfy the EVI: for all $\rho^1,\rho^2 \in \mathcal{P}_2(\Omega)$ and for a.e. $t$ 
$$
\mathcal{F}[\rho^1] \geq \mathcal{F}[\mu_t] + \frac{1}{2} \frac{\diff}{\diff t} \mathcal{W}_2^2(\mu_t, \rho^1), \qquad \mathcal{G}[\rho^2] \geq \mathcal{G}[\nu_t] + \frac{1}{2} \frac{\diff}{\diff t} \mathcal{W}_2^2(\nu_t, \rho^2).
$$
Hence, by the product rule,
\begin{equation}\label{eq:rate_nto1_timederivativeofW2}
\frac{1}{2} \frac{\diff}{\diff t} \mathcal{W}_2^2(\mu_t, \nu_t) \leq \mathcal{F}[\nu_t] - \mathcal{F}[\mu_t] + \mathcal{G}[\mu_t] - \mathcal{G}[\nu_t].
\end{equation}
We estimate $\mathcal{G}[\mu_t]-\mathcal{F}[\mu_t]$. We have
\begin{equation}\label{eq:difference_of_functionals_limit_nto1}
\mathcal{G}[\mu_t]-\mathcal{F}[\mu_t]  =
\int_{\Omega} \mu_t \left(\frac{\mu_t^{n-1}-1}{n-1} - \log\mu_t\right) \diff x. 
\end{equation}
Now, we compute the Taylor's expansion of the function $f(y) = a^y= e^{y\log a}$ around $y=0$. We have $f'(y) = (\log a) \, f(y)$, $f''(y)= (\log a)^2\, f(y)$ so that 
$$
f(y) = 1 + y\, \log a  + R, \qquad |R|\leq \frac{|y|^2}{2}\,(\log a)^2\, f(\xi), \qquad \xi \in [0,y].
$$
We can further estimate $f(\xi)\leq f(0) +f(y) = 1+f(y)$ since, depending on $\log a$, $f$ is either increasing or decreasing. Therefore, applying the Taylor's expansion to \eqref{eq:difference_of_functionals_limit_nto1} with $a=\mu_t$ and $y=n-1$, we obtain
\begin{equation}\label{eq:estimate_G-F_interms_mulog2_rate_nto1}
|\mathcal{G}[\mu_t]-\mathcal{F}[\mu_t]|\leq \frac{|n-1|}{2} \int_{\Omega} (\mu_t + \mu_t^n)|\log \mu_t|^2 \diff x.
\end{equation}
Note that the case $\mu_t=0$ is negligible since the integrand of \eqref{eq:difference_of_functionals_limit_nto1} vanish for $\mu_t=0$. Now, to estimate the integral in \eqref{eq:estimate_G-F_interms_mulog2_rate_nto1} we split $\Omega$ for three sets $\Omega_1 =\{\mu_t \geq 1\}$, $\Omega_2 = \{\mu_t \in [e^{-|x|},1]\}$, $\Omega_3 = \{\mu_t \leq e^{-|x|}\}$. On $\Omega_1$, we use the inequality $|\log y|\leq \frac{y^{(\alpha-n)/2}}{(\alpha-n)/2}$ for $y\geq 1$ so that
\begin{equation}\label{eq:estimate_mu_log2_rate_nto1_omega1}
\int_{\Omega_1} (\mu_t + \mu_t^n) \, |\log \mu_t|^2 \diff x \leq 2 \int_{\Omega_1} \mu_t^n \, |\log \mu_t|^2 \diff x  \leq \frac{8\, \|\mu_t\|_{L^{\alpha}(\Omega)}^{\alpha}}{(\alpha-n)^2}  \leq \frac{8\, \|\mu_0\|_{L^{\alpha}(\Omega)}^{\alpha}}{(\alpha-n)^2}  ,
\end{equation}
where we used that $L^p(\Omega)$ norms of $\mu_t$ do not increase with $t$. On the set $\Omega_2$, we use that $\log \mu_t \in [-|x|, 0]$ so that 
\begin{equation}\label{eq:rate_nto1_estimate_Omega_2}
\int_{\Omega_2} (\mu_t+\mu_t^n) \, |\log \mu_t|^2 \diff x \leq 2  \int_{\Omega_2} \mu_t \, |\log \mu_t|^2 \leq 2\int_{\Omega} \mu_t \, |x|^2 \diff x.
\end{equation}
To estimate the (RHS), if $\Omega = \Rd$ or $\Omega$ is a torus, we multiply the PDE for $\mu_t$ by $|x|^2$ and integrate by parts twice to obtain
$$
\int_{\Omega} \mu_t\, |x|^2 \diff x = 2\,d \int_0^t \int_{\Omega} \mu_s  \diff x \diff s + \int_{\Omega} \mu_0\, |x|^2 \diff x  = 2 \, d \, t + \int_{\Omega} \mu_0\, |x|^2 \diff x.
$$
If $\Omega$ is a bounded domain, there is $R$ such that $|x|\leq R$ on $\Omega$ so that $\int_{\Omega} \mu_t\, |x|^2 \diff x\leq R^2$. Plugging this into \eqref{eq:rate_nto1_estimate_Omega_2} we obtain the estimate on $\Omega_2$. Finally, on $\Omega_3$ we use that $\sqrt{y}|\log(y)|^2 \leq \frac{16}{e^2}$ for $y\in[0,1]$ so that
\begin{equation}\label{eq:estimate_mu_log2_rate_nto1_omega3}
\int_{\Omega_3} (\mu_t+\mu_t^n) \, |\log \mu_t|^2 \diff x \leq 2 \int_{\Omega_3} \mu_t \, |\log \mu_t|^2 \diff x \leq 
\frac{32}{e^2} \int_{\Omega} e^{-\frac{|x|}{2}} \diff x. 
\end{equation}  
Collecting estimates \eqref{eq:estimate_mu_log2_rate_nto1_omega1}, \eqref{eq:rate_nto1_estimate_Omega_2} and \eqref{eq:estimate_mu_log2_rate_nto1_omega3} and plugging them into \eqref{eq:estimate_G-F_interms_mulog2_rate_nto1} we obtain
$$
|\mathcal{G}[\mu_t]-\mathcal{F}[\mu_t]|\leq \frac{|n-1|}{2}\left(\frac{8\,\|\mu_0\|_{L^{\alpha}(\Omega)}^{\alpha}}{(\alpha-n)^2}  + 4 \, d \, t + 2 \int_{\Omega} \mu_0\, |x|^2 \diff x+  \frac{32}{e^2} \int_{\Omega} e^{-\frac{|x|}{2}} \diff x 
\right)
$$
if $\Omega =\Rd$ or $\Omega$ is a periodic domain and
$$
|\mathcal{G}[\mu_t]-\mathcal{F}[\mu_t]|\leq \frac{|n-1|}{2}\left(\frac{8\, \|\mu_0\|_{L^{\alpha}(\Omega)}^{\alpha}}{(\alpha-n)^2}  + 2\, R^2+  \frac{32}{e^2} \int_{\Omega} e^{-\frac{|x|}{2}} \diff x 
\right)
$$
if $\Omega$ is a bounded domain with $|x|\leq R$ on $\Omega$. Now, to estimate $|\mathcal{G}[\nu_t] - \mathcal{F}[\nu_t]|$, we observe that the only difference is the estimate on the second moment. On a bounded domain we argue in the same way, resulting in the same estimate for $|\mathcal{G}[\nu_t] - \mathcal{F}[\nu_t]|$ while on $\Rd$ or a~periodic domain we have
$$
\int_{\Omega} \nu_t \, |x|^2 \diff x = 2\,d \int_0^t \int_{\Omega} |\nu_s|^n \diff x \diff s + \int_{\Omega} \mu_0 \, |x|^2 \diff x \leq 2\,d\,t \int_{\Omega} |\mu_0|^n \diff x + \int_{\Omega} \mu_0 \, |x|^2 \diff x,
$$
where we used that the $L^n(\Omega)$ norm of $\mu_s$ does not increase in time. To remove the dependence on $n$, we estimate using H{\"o}lder inequality with respect to the measure $\mu_0(x)\diff x$
$$
\int_{\Omega} |\mu_0|^n \diff x \leq \left(\int_{\Omega} |\mu_0|^{\alpha}\diff x\right)^{\frac{n-1}{\alpha-1}} \leq 1 + \int_{\Omega} |\mu_0|^{\alpha}\diff x,
$$
where we also used that $\frac{n-1}{\alpha-1} \leq 1$. Therefore,
\begin{multline*}
|\mathcal{G}[\nu_t]-\mathcal{F}[\nu_t]|\leq \\ \leq \frac{|n-1|}{2}\left(\frac{8\,\|\mu_0\|_{L^{\alpha}(\Omega)}^{\alpha}}{(\alpha-n)^2}  + 4\,d\,t\, (1+\|\mu_0\|_{L^{\alpha}(\Omega)}^{\alpha}) + 2 \int_{\Omega} \mu_0 \, |x|^2 \diff x +  \frac{32}{e^2} \int_{\Omega} e^{-\frac{|x|}{2}} \diff x 
\right).
\end{multline*}
Plugging these estimates into \eqref{eq:rate_nto1_timederivativeofW2} and integrating in time we arrive at the claim. 
\end{proof}

\section{The quadratic porous medium equation}\label{app:PME_m2_and_nonlocal2local_facts}
We recall here estimates which are crucial for the proof of Theorem \ref{thm:rate_of_conv_nonlocal_to_local}.

\begin{lem}\label{lem:estimates_PME_1D}
Let $\Omega= \R$ or $\Omega =\mathbb{T}$. Let $\rho = \rho_t(x)$ with $t\in [0,\infty), x \in \Omega$ be a solution to 
$$
\partial_t \rho = \frac{1}{2} \partial^2_{x} \rho^2 = \partial_x(\rho \, \partial_x \rho)
$$
with the initial condition $\rho_0 \in L^1(\Omega)\cap L^{\infty}(\Omega)$, $\rho_0 \geq 0$.
\begin{enumerate}[label=(P\arabic*)]
\item\label{est:Lipschitz_pressure} If $\partial_x \rho_0 \in L^{\infty}(\Omega)$, then $\| \partial_x \rho_t \|_{L^{\infty}(\Omega)} \leq \| \partial_x \rho_0\|_{L^{\infty}(\Omega)}$. 
\item\label{est:laplacian_pressure} If the negative part of $\partial_{x}^2 \rho_0$ satisfies $|\partial_{x}^2 \rho_0|^{-} \in \mathcal{M}(\Omega)$, then $\partial^2_{x} \rho_t \in \mathcal{M}(\Omega)$ and
$$
\| \partial^2_{x} \rho_t \|_{\mathcal{M}(\Omega)}  \leq 2\,\| |\partial_{x}^2 \rho_0|^{-} \|_{\mathcal{M}(\Omega)},
$$
where $\mathcal{M}(\Omega)$ is the space of bounded Radon measures equipped with the total variation norm.
\item\label{est:weighted_pressure} Under assumptions of \ref{est:Lipschitz_pressure} and \ref{est:laplacian_pressure}, for all $T>0$, $\sqrt{\rho_t} \, \partial_{x}^2 \rho_t \in L^{2}((0,T)\times\Omega)$. More precisely,
$$
\int_0^T \int_{\Omega} \rho_t \, |\partial^2_x \rho_t|^2 \diff x \diff t \leq \Big(T\,\|\partial_x \rho_0\|^2_{L^{\infty}(\Omega)} + \frac{1}{2}\|\rho_0\|_{L^{\infty}(\Omega)}\Big) \, \| |\partial^2_x \rho_0|^- \|_{\mathcal{M}(\Omega)}.
$$
\end{enumerate}
\end{lem}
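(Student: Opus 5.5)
We need to prove Lemma \ref{lem:estimates_PME_1D}: the three estimates (P1)–(P3) for $\partial_t\rho=\partial_x(\rho\,\partial_x\rho)$ in one dimension. The plan is to work with smooth, strictly positive solutions and pass to the limit at the end. On $\mathbb{T}$ we add a small constant to $\rho_0$. On $\R$ we add a small Gaussian and mollify, as in Step 3 of the proof of Theorem \ref{thm:rate_of_conv_nonlocal_to_local}. For such data the equation is uniformly parabolic, so all manipulations below are justified. All three bounds are stable under narrow convergence by lower semicontinuity: of the $L^\infty$ norm, of the total variation norm, and of the weighted $L^2$ norm (the last via weak convergence of $\sqrt{\rho}\,\partial^2_x\rho$).

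\textbf{(P1).} Set $u=\partial_x\rho$ and differentiate the equation, using $\partial_x(\rho\,\partial_x\rho)=\rho\,\partial_x u+u^2$. This gives
\[
\partial_t u=\rho\,\partial_x^2 u+3u\,\partial_x u .
\]
This is a scalar parabolic equation without zeroth-order term, so the maximum principle yields $\sup|u(t)|\le\sup|u(0)|$. On $\R$ we additionally use decay at infinity of the regularized solutions.

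\textbf{(P2).} This is the Aronson--Bénilan step. Let $w=\partial_x^2\rho$. We have $\partial_t\rho=\rho w+u^2$, and differentiating twice gives
\[
\partial_t w=\rho\,\partial_x^2 w+4u\,\partial_x w+3w^2 .
\]
Since $\int w\,dx=0$ (on $\mathbb T$ by periodicity, on $\R$ by decay), we have $\|w\|_{\mathcal M}=2\int w^-\,dx$. It therefore suffices to show that $\int w^-\,dx$ is nonincreasing. Let $z=w^-$. Formally,
\[
\partial_t z \le \rho\,\partial_x^2 z+4u\,\partial_x z - 3z^2 .
\]
Integrating, $\int\rho\,\partial_x^2 z=\int(\partial_x^2\rho)\, z=\int wz=-\int z^2$ and $\int 4u\,\partial_x z=-4\int wz=4\int z^2$. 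Together with $-3\int z^2$, the total is $0$, so $\frac{d}{dt}\int z\le 0$. To make this rigorous we replace $z$ by a smooth convex approximation $\eta_\delta(-w)$ of the positive part and handle the Kato-type terms. This is the main obstacle, since the exact cancellation of the $z^2$ terms must survive the regularization. If the direct computation is too delicate, I would fall back on the known Aronson--Bénilan literature.

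\textbf{(P3).} Test the equation $\partial_t u=\partial_x^2(\rho^2/2)$ for $u$ against $u$:
\[
\frac12\frac{d}{dt}\int u^2=-\int \partial_x u\,\partial_x(\rho u)=-\int\rho w^2-\int u^2 w .
\]
Hence
\[
\int_0^T\!\!\int\rho w^2\le \frac12\int u_0^2+\int_0^T\|u\|_\infty^2\|w\|_{\mathcal M}\,dt .
\]
By (P1), the second term is at most $T\|u_0\|_\infty^2\cdot\|w\|_{\mathcal M}$. This is off by a factor $2$ from the statement. To fix it we split $-\int u^2w$ and use only $w^-$, i.e. $-\int u^2 w\le\int u^2w^-\le\|u_0\|_\infty^2\,\||\partial^2_x\rho_0|^-\|_{\mathcal M}$, where the last step uses the monotonicity of $\int w^-$ from (P2).

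For the first term, integrate by parts and use $\rho\ge0$: $\int u_0^2=-\int\rho_0 w_0\le\int\rho_0 w_0^-\le\|\rho_0\|_\infty\,\||\partial_x^2\rho_0|^-\|_{\mathcal M}$. Summing the two contributions gives exactly the claimed bound in (P3).
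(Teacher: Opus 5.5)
Your proposal is correct and follows essentially the same route as the paper. The three parts match as follows: (P1) uses the maximum principle for $u=\partial_x\rho$, which the paper simply cites from V\'azquez; (P2) is the Aronson--B\'enilan computation for $w^-$ with Kato's inequality and $|w| = w + 2w^-$; and (P3) combines the identity $\int\rho w^2 = -\int u^2 w - \frac12\frac{\diff}{\diff t}\int u^2$ with using only the negative parts of $w$ and $w_0$. The only imprecision is your claim of uniform parabolicity: on $\R$ the Gaussian-lifted solutions decay at infinity, so they are merely locally uniformly parabolic. The maximum principle and the vanishing of boundary terms therefore rest on the decay at infinity you invoke, a point the paper likewise leaves to ``a usual approximation''.
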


As remarked in \cite{MR2487898}, the estimate \ref{est:laplacian_pressure}, called the Aronson-B\'{e}nilan estimate \cite{MR524760}, is valid only on the whole space or on the periodic domain. We refer to \cite{bevilacqua2022aronson} for more variants of this inequality.

\begin{proof}[Proof of Lemma \ref{lem:estimates_PME_1D}]
All the manipulations in the proof are performed assuming $\rho$ is smooth and strictly positive. The general case can be established by a usual approximation. Estimate \ref{est:Lipschitz_pressure} follows directly from \cite[Proposition 15.4]{MR2286292}. Concerning \ref{est:laplacian_pressure}, we only explain how to modify the argument from \cite[Proposition 9.4]{MR2286292}. Direct computation shows that the function $w = \partial^2_x \rho$ satisfies
\begin{equation}\label{eq:AB_differential_ineq_before_integrating}
\partial_t w - 3\, w^2 -  \rho \, \partial^2_x w - 4\, \partial_x \rho \, \partial_x w = 0.
\end{equation}
We want to estimate $|w|^{-} = -w\, \mathds{1}_{w\leq0} \geq 0$. Multiplying \eqref{eq:AB_differential_ineq_before_integrating} by $-\mathds{1}_{w\leq0}$ yields
$$
\partial_t |w|^{-} +3 \, (|w|^-)^2 +  \rho \, \partial^2_x w\,\mathds{1}_{w\leq0} - 4\, \partial_x \rho \, \partial_x |w|^- = 0.
$$
By Kato's inequality \cite{MR2056467}, we have $ \partial^2_x w\,\mathds{1}_{w\leq0} \geq -  \partial^2_x |w|^-$ so that
$$
\partial_t |w|^{-} +3 \, (|w|^-)^2 - \rho \, \partial^2_x |w|^- - 4\, \partial_x \rho \, \partial_x |w|^- \leq 0.
$$
 Now, we integrate over $\Omega$. Since
 $$
 \int_{\Omega} \rho \, \partial^2_x |w|^- \diff x = - \int_{\Omega} \partial_x \rho \, \partial_x |w|^- \diff x = \int_{\Omega} \partial^2_x \rho \,  |w|^- \diff x = - \int_{\Omega} (|w|^-)^2 \diff x,
 $$
 we obtain $\partial_t \int_{\Omega} |w|^{-} \diff x \leq 0$ which gives
\begin{equation}\label{eq:negative_part_pressure} 
 \int_{\Omega} |\partial^2_x \rho_t|^{-} \diff x \leq \int_{\Omega} |\partial^2_x \rho_0|^{-} \diff x.
\end{equation}
By writing $|\partial_{x}^2 \rho_t | = \partial_{x}^2 \rho_t  + 2\, |\partial_{x}^2 \rho_t |^{-}$ and integrating in space we arrive at the claim.
 \\

\noindent Concerning \ref{est:weighted_pressure}, assuming $\rho$ is a smooth solution, we write
$$
\partial_t \rho = |\partial_x \rho|^2 + \rho \, \partial^2_x \rho.
$$ 
We multiply by $\partial^2_x \rho$ and integrate by parts to get
$$
\int_{\Omega} \rho \, |\partial^2_x \rho|^2 \diff x = - \int_{\Omega} |\partial_x \rho|^2 \, \partial^2_x \rho \diff x - \partial_t\,  \frac{1}{2} \int_{\Omega} |\partial_x \rho_t|^2 \diff x.
$$
Integrating in time and using the maximum principle \ref{est:Lipschitz_pressure} as well as \eqref{eq:negative_part_pressure} we obtain
\begin{align*}
&\int_0^t \int_{\Omega} \rho \, |\partial^2_x \rho|^2 \diff x \diff s \leq - \int_0^t \int_{\Omega} |\partial_x \rho|^2 \, \partial^2_x \rho \diff x \diff s +  \frac{1}{2} \int_{\Omega} |\partial_x \rho_0|^2 \diff x = \\ 
&- \int_0^t \int_{\Omega} |\partial_x \rho|^2  \partial^2_x \rho \diff x \diff s -\frac{1}{2} \int_{\Omega} \rho_0 \, \partial^2_x\rho_0 \diff x \leq \Big(t\,\|\partial_x \rho_0\|^2_{L^{\infty}(\Omega)} + \frac{1}{2}\|\rho_0\|_{L^{\infty}(\Omega)}\Big) \, \| |\partial^2_x \rho_0|^- \|_{\mathcal{M}(\Omega)}. 
\end{align*}
Note that since $|\partial_x\rho|^2$ and $\rho_0$ are nonnegative, we could use the information only on the negative part of $\partial^2_x \rho$ as in \eqref{eq:negative_part_pressure}. The general result follows by approximation.
\end{proof}

\section{Finite speed of propagation for aggregation-diffusion equations}\label{app:general_aggr_diff_PDE}

In this section we prove that solutions to 
\begin{equation}\label{eq:aggregation-diffusion-PDE-appendix}
\partial_t \rho = \Delta \rho^m + \DIV(\rho \nabla(V + W\ast \rho))
\end{equation}
with a bounded and compactly supported initial condition stay compactly supported for all times. The PDE is a 2-Wasserstein gradient flow of the energy
\begin{equation}\label{eq:energy_F_wass_gradient_flow_aggr_diff}
\mathcal{F}[\rho] = \frac{1}{m-1}  \int_{\Omega} \rho^m \diff x+\int_{\Omega}   V\, \rho \diff x + \frac{1}{2}\int_{\Omega}  W\ast\rho \, \rho \diff x.
\end{equation}
Note that the distributional solutions to \eqref{eq:aggregation-diffusion-PDE-appendix} with $\int_0^t \int_{\Omega} \rho_s\, \left|\nabla \frac{\delta \mathcal{F}}{\delta \rho}[\rho_s] \right|^2 \diff x \diff s<\infty$ are unique. Indeed, by Assumption \ref{ass:potential}, the functional $\mathcal{F}$ in \eqref{eq:energy_F_wass_gradient_flow_aggr_diff} is $\lambda$-geodesically convex. Therefore, the unique EVI solution corresponds to the subdifferential one \cite[Theorem 4.35]{MR3050280} which is equivalent to the distributional one \cite[Propositions 4.36-4.38]{MR3050280}.
 
\begin{thm}\label{thm:aggr-diff-compact-supp} Let $\rho$ be a distributional solution on $\Omega = \Rd$ to \eqref{eq:aggregation-diffusion-PDE-appendix} with compactly supported initial condition $\rho_0 \in \mathcal{P}_2(\Rd)\cap L^{\infty}(\Rd)$. We assume that $V, W:\Rd\to\R$ satisfy 
\ref{ass_item:growth_conditions_V_W_main_thm}, \ref{ass:hessian_of_potential} and $W(x)=W(-x)$. Then, there exists a function $R(t):[0,\infty)\to [0,\infty)$ such that $\rho_t$ is supported in the ball $B_{\sqrt{2R(t)}}$ for all $t>0$.
\end{thm}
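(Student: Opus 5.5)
A natural route is a comparison argument with an explicit compactly supported supersolution, i.e. a Barenblatt-type barrier adapted to the drift. Write the equation in nondivergence form,
$$
\partial_t \rho = \Delta \rho^m + \nabla\rho\cdot {\bf b}_t + \rho\,\DIV {\bf b}_t, \qquad {\bf b}_t := \nabla (V + W\ast\rho_t),
$$
and treat ${\bf b}_t$ as a given field. By \ref{ass_item:growth_conditions_V_W_main_thm} and the second moment bound, which holds since $\rho_t \in \mathcal{P}_2$, we have $|{\bf b}_t(x)| \leq A(t)\,(1+|x|)$. By \ref{ass:hessian_of_potential}, $|\DIV {\bf b}_t| \leq \|\Delta V\|_{L^\infty} + \|\Delta W\|_{L^\infty} =: L$. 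I would first establish the a priori bound $\|\rho_t\|_{L^\infty} \leq e^{Lt}\|\rho_0\|_{L^\infty}$ exactly as in \eqref{eq:auxiliary_L_inf_estimate}. I would also prove a second moment estimate of the form \eqref{eq:tail_estimate_main_section_aggregation_diffusion} using the energy dissipation, which gives a locally bounded $A(t)$. The second moment estimate is the one the paper later cites as \eqref{eq:moment_estimate_second_lemma_finite_speed}, so it should be proved within this theorem.

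Next I would construct the barrier in pressure form. Set $p = \frac{m}{m-1}\rho^{m-1}$; then
$$
\partial_t p = (m-1)\,p\,\Delta p + |\nabla p|^2 + \nabla p\cdot{\bf b} + (m-1)\,p\,\DIV{\bf b}.
$$
Take the candidate $\bar p(t,x) = \big(C(t) - \tfrac{|x|^2}{2}\,a(t)\big)_+$, or more simply $\bar p = \big(R(t) - \tfrac{|x|^2}{2}\big)_+\,\kappa$. Its support is the ball $B_{\sqrt{2R(t)}}$, which matches the statement. On the positivity set one computes $\Delta\bar p = -d\kappa$ and $\nabla\bar p = -\kappa x$, and $\nabla\bar p\cdot{\bf b} \leq \kappa A(t)\,(|x|+|x|^2)$. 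The supersolution inequality then reduces to an ODE inequality of the form
$$
\kappa R'(t) \geq \kappa^2|x|^2 + \kappa A(t)\,(1+|x|)^2 + (m-1)\,\kappa R\,(L + \dots)
$$
for $|x|^2 \leq 2R$. This is satisfied by choosing $R(t)$ growing exponentially, $R' = c(t)\,(1+R)$ with $c$ depending on $\kappa$, $A$, $L$, $m$. The initial constants $\kappa$ and $R(0)$ are chosen so that $\bar p(0) \geq p_0$; this is possible because $\rho_0$ is bounded and compactly supported.

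The main obstacle is rigor in the comparison step. The drift ${\bf b}$ depends on $\rho$ itself, and $\rho$ is only a distributional (EVI) solution. My plan is to freeze ${\bf b}_t$ as computed from the given solution. Then $\rho$ solves a \emph{linear-drift} porous medium equation with a locally Lipschitz-in-$x$ field of linear growth, and for such equations comparison with a Lipschitz barrier holds. I would prove that comparison via the standard $L^1$-contraction (Kato-type) argument for $(\rho - \bar\rho)_+$, where $\bar\rho$ is recovered from $\bar p$, tested against a cutoff, using uniqueness and the regularity $\int\!\!\int\rho|\nabla\frac{\delta\mathcal F}{\delta\rho}|^2<\infty$. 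If this is delicate, the fallback is to regularize: approximate $\rho_0$ by smooth positive data, use smooth classical solutions of the frozen linear-drift problem, apply the classical maximum principle, and pass to the limit using the geodesic-convexity stability in $\mathcal{W}_2$. Comparison then gives $\rho_t \leq \bar\rho_t$, and hence $\operatorname{supp}\rho_t \subset B_{\sqrt{2R(t)}}$.
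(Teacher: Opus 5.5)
Your proposal is correct and is essentially the paper's proof. It uses the same energy-dissipation second-moment bound giving $|\nabla V+\nabla W\ast\rho_t|\le A(t)(1+|x|)$, and the same barrier $v_t=C\big((R(t)-|x|^2/2)_+\big)^{1/(m-1)}$ (your $\bar p$ is its pressure), built with the drift frozen from $\rho$ and with $R$ solving a linear ODE. The comparison is the same $L^1$ estimate for $(\rho-v)_+$, justified by regularization; the paper regularizes on a large torus with cut-off potentials rather than freezing the drift on $\Rd$, and it absorbs $\|\Delta V\|_{L^\infty}+\|\Delta W\|_{L^\infty}$ by taking $C$ large instead of putting it into $R'$.

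One small fix for your fallback: smooth positive approximating data never lie below a compactly supported barrier, so the classical pointwise maximum principle does not apply directly. Use instead the $L^1$ form $\int(\rho^\eps_t-\bar\rho_t)_+\diff x\le\int(\rho^\eps_0-\bar\rho_0)_+\diff x\to0$, as the paper does.
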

\begin{proof}[Proof of Theorem \ref{thm:aggr-diff-compact-supp}] For the computations below, we assume that $\rho_t$ solving \eqref{eq:aggregation-diffusion-PDE-appendix} is smooth. The relevant technical details are discussed below in Remark \ref{rem:justification_smoothness_solutions_to_aggr-diff}.\\

\underline{Step 1: A priori estimates.} We first prove that
\begin{equation}
\label{eq:moment_estimate_second_lemma_finite_speed}
\left( \int_{\Omega} |x|^2 \, \rho_t(x)\diff x \right)^{1/2} \leq \left( \int_{\Omega} |x|^2 \, \rho_0(x)\diff x \right)^{1/2} + \sqrt{t}\, \left(\mathcal{F}[\rho_0]+ C^0_{V} + C^0_W \right)^{1/2},
\end{equation} 
\begin{equation}\label{eq:estimate:gradient_advection}
|\nabla V + \nabla W\ast \rho_t|(x) \leq C^3_{V} + C^3_{W} +   (C^4_V+C^4_{W}) \,  |x| + C^4_W\, \left( \int_{\Omega} |y|^2  \rho_t(y)\diff y \right)^{1/2},
\end{equation}
where the constants come from conditions \ref{ass_item:growth_conditions_V_W_main_thm}, \ref{ass:hessian_of_potential}. To this end, we note that \eqref{eq:aggregation-diffusion-PDE-appendix} can be written as $\partial_t \rho_t = \DIV(\rho_t \, \nabla \frac{\delta \mathcal{F}}{\delta \rho}[\rho_t])$ where $\mathcal{F}$ is defined in \eqref{eq:energy_F_wass_gradient_flow_aggr_diff}. Multiplying \eqref{eq:aggregation-diffusion-PDE-appendix} by $\frac{\delta \mathcal{F}}{\delta \rho}[\rho_t]$ and integrating by parts, we obtain for a.e. $t$
$$
\mathcal{F}[\rho_t] + \int_0^t \int_{\Omega} \rho_s\, \left|\nabla \frac{\delta \mathcal{F}}{\delta \rho}[\rho_s] \right|^2 \diff x \diff s \leq \mathcal{F}[\rho_0].
$$
We note that \ref{ass_item:growth_conditions_V_W_main_thm} implies $\mathcal{F}[\rho_t]  \geq -C^0_{V}-C^0_{W}$ by the conservation of mass so that
\begin{equation}\label{eq:bound_on_the_diss_finite_speed_prop_aggr-diff}
\int_0^t \int_{\Omega} \rho_s\, \left|\nabla \frac{\delta \mathcal{F}}{\delta \rho}[\rho_s] \right|^2 \diff x \diff s \leq \mathcal{F}[\rho_0] + C^0_{V} + C^0_{W}.
\end{equation}
To prove \eqref{eq:moment_estimate_second_lemma_finite_speed}, we compute
$$
\frac{\diff}{\diff t} \int_{\Omega} |x|^2 \, \rho_t \diff x = -2 \int_{\Omega} x\, \rho_t \, \nabla \frac{\delta F}{\delta \rho}[\rho_t] \diff x \leq 2\left(  \int_{\Omega} |x|^2 \, \rho_t \diff x  \right)^{1/2} \, \left( \int_{\Omega} \rho_t  \left|  \nabla \frac{\delta F}{\delta \rho}[\rho_t] \right|^2 \diff x \right)^{1/2}.
$$
It follows that 
$$
\frac{\diff}{\diff t} \left(\int_{\Omega} |x|^2 \, \rho_t \diff x\right)^{1/2} \leq 
\left( \int_{\Omega} \rho_t  \left|  \nabla \frac{\delta F}{\delta \rho}[\rho_t] \right|^2 \diff x \right)^{1/2}.
$$
Integrating in time and using \eqref{eq:bound_on_the_diss_finite_speed_prop_aggr-diff} we arrive at \eqref{eq:moment_estimate_second_lemma_finite_speed}. To prove \eqref{eq:estimate:gradient_advection}, we compute using condition \ref{ass_item:growth_conditions_V_W_main_thm} for both $V$ and $W$
\begin{equation*}
\begin{split}
|\nabla V + \nabla W\ast \rho_t|(x) &\leq C^3_{V}  +  C^4_V \, |x| + \left| \int_{\Omega} \nabla W(x-y) \, \rho_t(y) \diff y\right|  \\
&\leq C^3_{V} + C^3_{W} +  (C^4_V+C^4_{W}) \, |x|  + C^4_W \int_{\Omega} |y|\, \rho_t(y) \diff y\\
&\leq C^3_{V} + C^3_{W} +   (C^4_V+C^4_{W}) \, |x| + C^4_W\, \left( \int_{\Omega} |y|^2 \, \rho_t(y)\diff y \right)^{1/2}.
\end{split}
\end{equation*} 
\underline{Step 2: Construction of $R(t)$.} Let
$
v_t(x) := C (\left| R(t) - |x|^2/2 \right|^+)^{\frac{1}{m-1}},
$
where $|x|^+ = \max(x,0)$. We will find $C>0$ and $R(t)$ so that $v$ satisfies
\begin{equation}\label{eq:v_supersolution_aggr_diffusion}
\partial_t v \geq \Delta v^m +\DIV(v\, \nabla(V + W\ast \rho_t)) \mbox{ a.e. on } (0,\infty)\times\Rd. 
\end{equation}
Direct computation of $\Delta v^m$, $\partial_t v$, $\nabla v$ shows that
$$
\nabla v^m = - \frac{C^{m-1} \,m}{m-1}\, v \, x, \qquad
\Delta v^m = -\frac{C^{m-1} \, d\,m}{m-1}\, v +  \frac{C^{2m-2} \,m}{(m-1)^2}\, v^{2-m} \, \mathds{1}_{R(t) \geq |x|^2/2}  \, |x|^2,
$$
$$
\partial_t v = \frac{C^{m-1}}{m-1} \, v^{2-m} \,  \mathds{1}_{R(t) \geq |x|^2/2} \, R'(t),\qquad 
\nabla v = -\frac{C^{m-1}}{m-1} \, v^{2-m} \,  \mathds{1}_{R(t) \geq |x|^2/2} \, x.
$$
We now observe that all the terms appearing in \eqref{eq:v_supersolution_aggr_diffusion} are in fact in $L^{\infty}(0,T; L^1(\Rd))$ for all $T>0$ (and so, they can be understood pointwisely) whenever $R(t), \frac{1}{R(t)} \in L^{\infty}(0,T)$. Indeed, it suffices to establish integrability of $v^{2-m}$. To this end, we let $\vartheta = \frac{2-m}{m-1}$ and we compute
\begin{multline*}
\int_{\Rd} \left(\left| R(t) - |x|^2/2 \right|^+\right)^{\vartheta} \diff x \leq \widetilde{C} \, \int_{\Rd} \left(\big| \sqrt{2\,R(t)} - |x| \big|^+\right)^{\vartheta} \diff x \leq\\ \leq \widetilde{C} \, \int_{0}^{\sqrt{2\,R(t)}} (\sqrt{2\,R(t)} - r)^{\vartheta} \, r^{d-1} \diff r = \widetilde{C} \, R(t)^{\frac{d+\vartheta}{2}} \int_0^1 (1-s)^{\vartheta}\, s^{d-1} \diff s,
\end{multline*}
where $\widetilde{C}$ changes from line to line and depends on $\|R(t)\|_{L^{\infty}(0,T)}$, $\|1/R(t)\|_{L^{\infty}(0,T)}$, $\vartheta$ and $d$. The last integral is finite because $\vartheta + 1 = \frac{1}{m-1} > 0$.\\

Using $|x|^2 \leq 2\,R(t)$ and the explicit formulas above we obtain
\begin{equation}\label{eq:finite_speed_prop_pure_porous_medium_part}
\partial_t v - \Delta v^m \geq \frac{C^{m-1}}{m-1}\, v^{2-m} \, \mathds{1}_{R(t) \geq |x|^2/2}  \left[ R'(t) \!-\! C^{m-1} \, \frac{2\,m}{m-1}  \, R(t) \right]  + C^{m-1} \, \frac{d\,m}{m-1}\, v.
\end{equation}
Next, we estimate the terms corresponding to advection and aggregation:
\begin{equation}\label{eq:eq:eq:finite_speed_prop_advection_diffusion_1}
-v\, \Delta(V + W\ast \rho_t) \geq -v \, \left(\| \Delta V\|_{L^{\infty}(\Omega)} + \| \Delta W\|_{L^{\infty}(\Omega)} \right),
\end{equation}
\begin{equation}\label{eq:eq:eq:finite_speed_prop_advection_diffusion_2}
-\nabla v\, \nabla(V + W\ast \rho_t) \geq -\frac{C^{m-1}}{m-1} \, v^{2-m} \,  \mathds{1}_{R(t) \geq |x|^2/2} \, |x| \, |\nabla V + \nabla W\ast \rho_t|.
\end{equation}
We estimate the second term directly with \eqref{eq:estimate:gradient_advection} and then \eqref{eq:moment_estimate_second_lemma_finite_speed}  
\begin{equation}\label{eq:eq:finite_speed_prop_more_advection_diffusion}
\begin{split}
- &\mathds{1}_{R(t) \geq |x|^2/2} \, |x| \, |\nabla V + \nabla W\ast \rho_t| \geq \\ 
&\, \geq -\mathds{1}_{R(t) \geq |x|^2/2}\, \sqrt{2R(t)}\,\left(\! C^3_{V} \!+ \!C^3_{W} \!+ \!  (C^4_V+C^4_{W})   |x| \!+ \!C^4_W\left( \int_{\Omega} |y|^2  \rho_t(y)\diff y \right)^{1/2}\right)\\
&\, \geq -\mathds{1}_{R(t) \geq |x|^2/2}\, \left(2\,R(t)\, (C^4_V+C^4_{W}) + \sqrt{2R(t)}\, B(t) \right),
\end{split}
\end{equation}
where $B(t)$ is defined via 
\begin{equation}\label{eq:parameter_B(t)_to_define_the_support}
{B}(t):= C^3_{V} + C^3_{W}  + C^4_{W}\left( \int_{\Omega} |x|^2 \, \rho_0(x)\diff x \right)^{1/2}+ C^4_{W}\,\sqrt{t} \left(\mathcal{F}[\rho_0]+ C^0_{V} + C^0_W\right)^{1/2}. 
\end{equation}
Combining \eqref{eq:finite_speed_prop_pure_porous_medium_part}, \eqref{eq:eq:eq:finite_speed_prop_advection_diffusion_1}, \eqref{eq:eq:eq:finite_speed_prop_advection_diffusion_2} and \eqref{eq:eq:finite_speed_prop_more_advection_diffusion}
\begin{equation}\label{eq:full_PDE_for_comparison_before_choosing_R_and_C}
\begin{split}
&\partial_t v \! -\! \Delta v^m \!-\! \DIV(v\,\nabla(V + W\ast \rho_t)) \geq (C^{m-1} \, \frac{d\,m}{m-1} \! -\!  \| \Delta V\|_{L^{\infty}(\Omega)} \! - \! \| \Delta W\|_{L^{\infty}(\Omega)}) \,v \\
&+\!\frac{C^{m-1}}{m\!-\!1}\, v^{2-m} \, \mathds{1}_{R(t) \geq |x|^2/2}  \left[ R'(t) \!-\! \left(\frac{2C^{m-1}m}{m-1} \!+\!2(C^4_V +C^4_W)\!\right)\!R(t)  \!-\! B(t)  \sqrt{2R(t)} \right].
\end{split}
\end{equation}
We will choose $C$ such that 
\begin{equation}\label{eq:constraint_on_C_laplacian_V_W}
C^{m-1} \, \frac{d\,m}{m-1} \geq  \| \Delta V\|_{L^{\infty}(\Omega)} + \| \Delta W\|_{L^{\infty}(\Omega)},
\end{equation}
so that the first term on the (RHS) is nonnegative and we define $R(t)$ via ODE
\begin{equation}\label{eq:ode_R(t)_to_define_the_support}
R'(t) =  \left( \frac{2C^{m-1}m}{m-1}   +2\,(C^4_V +C^4_W) +  B(t)\right)\, R(t)  + B(t),
\end{equation}
with $R(0)$ to be chosen. Thanks to the inequality $-\sqrt{2 R(t)} \geq -R(t) -1$, we see that the second term on the (RHS) of \eqref{eq:full_PDE_for_comparison_before_choosing_R_and_C} vanishes, concluding the proof of \eqref{eq:v_supersolution_aggr_diffusion}.\\

\underline{Step 3: Conclusion.} We will first prove 
that the negative part $|v_t-\rho_t|^{-} = -(v_t-\rho_t)\, \mathds{1}_{v_t-\rho_t<0}$ satisfies
\begin{equation}\label{eq:comparison_principle_L1_with_supersolution_v}
\int_{\Omega} |v_t-\rho_t|^{-} \diff x \leq \int_{\Omega} |v_0-\rho_0|^{-} \diff x.
\end{equation}
This is a standard comparison principle for the porous medium equation. Indeed, using that
$$
\partial_t (v-\rho) \geq \Delta (v^m-\rho^m) +\DIV((v-\rho)\, \nabla(V + W\ast \rho)),
$$
we obtain \eqref{eq:comparison_principle_L1_with_supersolution_v} by multiplying this identity by $-\mathds{1}_{v-\rho<0}$ and integrating by parts as in \cite[Prop. 3.5]{MR2286292}. Now, note that in the definition of $
v_t(x) := C (\left| R(t) - |x|^2/2 \right|^+)^{\frac{1}{m-1}}$, we can still choose $C$ (subject to the constraint \eqref{eq:constraint_on_C_laplacian_V_W}) and $R(0)$. We choose them sufficiently large so that $v_0 \geq \rho_0$ (which is possible since $\rho_0$ is bounded and compactly supported). From \eqref{eq:comparison_principle_L1_with_supersolution_v} we deduce that $\rho_t \leq v_t$ meaning that $\rho_t$ is supported in $B_{\sqrt{2R(t)}}$ as desired.
\end{proof}
\begin{rem}\label{rem:justification_smoothness_solutions_to_aggr-diff}
The computation above requires some regularity of $\rho_t$. To justify the computation, we first embed the problem into a large periodic domain  $R\,\Td$ where $R$ is large. To this end, we modify the potentials $V$, $W$ by defining
$$
V_P= V\,\zeta_P, \qquad \qquad W_{P} = W\,\zeta_P,
$$
where $P$ is a parameter such that $P>1$, $2P<R$ and $\zeta_{P}:\Rd \to [0,1]$ is a cutoff function such that $\zeta_{P} = 1$ on $B_{P}$, $\zeta_P$ is supported in $B_{2P}$, $\zeta_{P}(x)=\zeta_{P}(-x)$, $|\nabla \zeta_P|\leq \frac{C_{\zeta}}{P}$, $|\nabla^2 \zeta_P| \leq \frac{C_{\zeta}}{P^2}$ for some constant $C_{\zeta}$. Since $V_P$ and $W_P$ are supported on $B_R$, they can be periodically extended to $R\Td$. We note two things.
\begin{itemize}[leftmargin=1cm]
\item The cutoff function $\zeta_P$ can be obtained as follows. If $\{\psi_{\delta}\}_{\delta}$ is a usual mollifier, we define $\zeta_{P} = \mathds{1}_{B_{\frac{3P}{2}}} \ast \psi_{\frac{P}{2}}$. Then, $\zeta_{P}=1$ on $B_{P}$, $|\zeta_P|\leq 1$ and $\zeta_P$ is supported on $B_{2P}$. Moreover, by Young's inequality
\begin{align*}
|\nabla \zeta_P| &\leq \|\nabla \psi_{\frac{P}{2}}\|_{L^1(\Rd)}\leq \frac{2}{P}\,\| \nabla \psi\|_{L^1(\Rd)},\\
|\nabla^2 \zeta_P| &\leq \|\nabla^2 \psi_{\frac{P}{2}}\|_{L^1(\Rd)}\leq \frac{4}{P^2}\,\| \nabla^2 \psi\|_{L^1(\Rd)},
\end{align*}
so that $C_{\zeta}:= \max(2\| \nabla \psi\|_{L^1(\Rd)},4\|\nabla^2\psi\|_{L^1(\Rd)})$. 
\item Second, $V_P$ and $W_P$ satisfy \ref{ass_item:growth_conditions_V_W_main_thm}, \ref{ass:hessian_of_potential} and $W_P(x)=W_P(-x)$. Indeed, it is sufficient to estimate $\nabla V_P$, $\nabla^2V_P$ (the same computation works for $\nabla W_P$, $\nabla^2W_P$). We have
\begin{equation}\label{eq:estimates_hessian_VP_app:finite_speed}
\begin{split}
|\nabla^2 V_{P}| &\leq |\nabla^2 V|\, \zeta_P +2 \,|\nabla V \otimes \nabla \zeta_P| + |V\, \nabla^2\zeta_P| \\
&\leq |\nabla^2 V| + 2\, (C^3_{V} + C^4_{V}|x|) \, \frac{C_{\zeta}}{P} \, \mathds{1}_{|x|\leq 2P} + (C^1_{V} + C^2_{V}|x|^2) \, \frac{C_{\zeta}}{P^2} \, \mathds{1}_{|x|\leq 2P}\\
&\leq |\nabla^2 V| + 4\,(C^2_{V}+C^4_{V})\,C_{\zeta} + \frac{C^1_{V} C_{\zeta}}{P^2} + \frac{2\,C^3_{V} C_{\zeta}}{P},
\end{split}
\end{equation}
\begin{equation}\label{eq:estimates_gradient_VP_app:finite_speed}
\begin{split}
&|\nabla V_P| \leq |\nabla V \, \zeta_P| + |V\, \nabla \zeta_P| \leq |\nabla V| + (C^1_{V} +C^2_V |x|^2 ) \, \mathds{1}_{|x|\leq 2P} \,\frac{C_{\zeta}}{P}\\
& \leq C^3_V +C^4_V \, |x| +C^1_V \, C_{\zeta} + 2\,C^2_V\, C_{\zeta}\, |x| = (C^1_V\, C_{\zeta} + C^3_V) + (2\,C^2_V\, C_{\zeta} + C^4_V)\, |x|.
\end{split}
\end{equation}
\end{itemize} 
Next, we consider initial condition $\rho_0^{\eps} = (1-\eps)\,\rho_0 +\frac{\eps}{|R\Td|}$ with $\eps \in (0,1)$ and we let $\rho_t^{\eps}$ to be the solution to \begin{equation}\label{eq:PDE_truncated_potentials_lifted_initial_condition}
\partial_t \rho^{\eps} = \Delta (\rho^{\eps})^m + \DIV(\rho^{\eps} \nabla(V_P + W_P\ast \rho^{\eps}))
\end{equation}
on $R\,\Td$ with initial condition $\rho_0^{\eps}$. The comparison principle as in \cite[Lemma~5.2]{MR4842678} implies
$$
\rho_t^{\eps} \geq \frac{\eps}{|R\Td|}\, \exp(-t\,(\|V_P \|_{L^{\infty}(R\Td)} +\|W_P \|_{L^{\infty}(R\Td)} )) >0,
$$
\begin{equation}\label{eq:Linfinity_bound_appendix_approximation_via_torus_to_justify_the_computations}
\rho_t^{\eps} \leq \left((1-\eps)\,\|\rho_0\|_{L^{\infty}(R\Td)} + \frac{\eps}{|R\Td|} \right)\, \exp(t\,(\|V_P \|_{L^{\infty}(R\Td)} +\|W_P \|_{L^{\infty}(R\Td)} )).
\end{equation}
It follows that $\rho_t^{\eps}$ is sufficiently regular to perform the computations in the proof of Theorem~\ref{thm:aggr-diff-compact-supp} and we obtain as in \eqref{eq:comparison_principle_L1_with_supersolution_v}
\begin{equation}\label{eq:conclusio_compact_support_for_shifted_initial_conditions}
\int_{\Omega} |v^{\eps}_t-\rho^{\eps}_t|^{-} \diff x \leq \int_{\Omega} |v^{\eps}_0-\rho^{\eps}_0|^{-} \diff x,
\end{equation}
where $v^{\eps}_t =  C (\left| R^{\eps}(t) - |x|^2/2 \right|^+)^{\frac{1}{m-1}}$ and $R^{\eps}(t)$ is defined by \eqref{eq:ode_R(t)_to_define_the_support}, with $\rho_0$ replaced by $\rho_0^{\eps}$ in the definition of $B(t)$ in \eqref{eq:parameter_B(t)_to_define_the_support}. Now, we want to pass to the limit $\eps \to 0$ both in the PDE \eqref{eq:PDE_truncated_potentials_lifted_initial_condition} and in \eqref{eq:conclusio_compact_support_for_shifted_initial_conditions}. The only difficulty is posed by the nonlinear term $\Delta (\rho^\eps)^m$. To overcome it, we observe two a priori estimates. 
\begin{itemize}[leftmargin=1cm]
\item First, \eqref{eq:bound_on_the_diss_finite_speed_prop_aggr-diff} and Lemma \ref{lem:B-B} imply that $\mathcal{W}_2^2(\rho^\eps_t, \rho^{\eps}_s) \leq |t-s| \, (\mathcal{F}[\rho^\eps_0]+C^0_V+C^0_W)$. Then, a~variant of Arzela-Ascoli theorem \cite[Prop. 3.3.1]{MR2129498} implies that $\rho^{\eps}_t \to \rho_t$ for all $t$ narrowly. 
\item Second, \eqref{eq:moment_estimate_second_lemma_finite_speed}, \eqref{eq:estimate:gradient_advection}, and \eqref{eq:bound_on_the_diss_finite_speed_prop_aggr-diff} provide a uniform in $\eps$ estimate on $\{\nabla(\rho^{\eps}_t)^{m-\frac{1}{2}}\}$ in $L^2((0,T)\times R\Td)$. We choose a countable sequence $\eps_k \to 0$ such that $\rho^{\eps_k}_t \to \rho_t$ for all~$t$ narrowly and we let ${A} := \{ t \in [0,T]: \nabla(\rho^{\eps_k}_t)^{m-\frac{1}{2}} \in L^2(R\Td) \mbox{ for all } k\}$ which is a set of full measure. Fix $t \in {A}$. We can extract a further subsequence along which $\rho^{{\eps_k}_l}_t$ converges a.e. and, by the dominated convergence theorem together with \eqref{eq:Linfinity_bound_appendix_approximation_via_torus_to_justify_the_computations}, also in $L^1(R\Td)$. By the narrow convergence, the limit has to be $\rho_t$ so standard subsequence argument shows that $\rho_t^{\eps_k} \to \rho_t$ strongly in $L^1(R\Td)$ for a.e. $t$ (all $t\in A$). This is sufficient to pass to the limit in the nonlinear term.
\end{itemize} 

Passing to the limit $\eps \to 0$ in \eqref{eq:conclusio_compact_support_for_shifted_initial_conditions} (the convergence of $v^{\eps}_t$ is clear), we deduce that $\rho_t$ is compactly supported on a large torus $R\,\Td$ by choosing $C$ and $R(0)$ as in Step 3 of the proof above. We note that the bounds \eqref{eq:estimates_hessian_VP_app:finite_speed}--\eqref{eq:estimates_gradient_VP_app:finite_speed} do not depend on $P$ when $P\to\infty$, hence we can take $P$ (and $R$) sufficiently large (comparing to the support of $\rho_t$) so that
$$
\rho\, \nabla V_P = \rho \, \nabla V, \qquad \rho \nabla W_{P} \ast \rho = \rho  \nabla W \ast \rho.
$$
Hence, $\rho_t$ is the distributional solution on $\Rd$ which is known to be unique.
\end{rem}

\bibliographystyle{abbrv}
\bibliography{fastlimit}
\end{document}